\documentclass{amsart}
\usepackage{amsmath}
\usepackage{amssymb}
\usepackage{enumitem}
\usepackage{mathrsfs}
\usepackage{tikz-cd}
\usepackage{bm}
\usepackage{tikz}
\usepackage{xy}
\newcommand{\cta}{\catD} 
\newcommand{\ctb}{\catC} 
\newcommand{\tc}{\mathbb{T}} 
\newcommand{\tp}{\mathbb{P}} 
\newcommand{
\scalebox{0.8}{\input{./figures/.tikz}}
}[1]{
\scalebox{0.8}{\input{./figures/#1.tikz}}
}
\newcommand{\cat}[1]{\ensuremath{\mathbf{#1}}}

\newcommand{\catJ}{\cat{J}}
\newcommand{\HilbP}{\Hilb_\sim}
\newcommand{\FHilbP}{\FHilb_\sim}

\newcommand{\MatS}{\Mat_S}

\newcommand{\VecSp}{\cat{Vec}}
\newcommand{\FVecSp}{\cat{FVec}}
\newcommand{\GSet}{G\text{-}\cat{Set}}
\newcommand{\VecP}{\cat{Vec}_{\sim}}
\newcommand{\FVecP}{\cat{FVec}_{\sim}}
\newcommand{\VecProj}{\cat{Proj}_k} 
\newcommand{\Ring}{\cat{Rng}}
\newcommand{\catC}{\cat{C}}
\newcommand{\catD}{\cat{D}}
\newcommand{\Mat}{\cat{Mat}}
\newcommand{\FHilb}{\cat{FHilb}}
\newcommand{\Hilb}{\cat{Hilb}}

\newcommand{\Grp}{\cat{Grp}}
\newcommand{\id}[1]{\ensuremath{\mathrm{id}_{#1}}}
\newcommand{\Aut}{\mathrm{Aut}} 
\newcommand{\op}{\ensuremath{\mathrm{\rm op}}}
\newcommand{\isomto}{\xrightarrow{\sim}}
\newcommand{\coproj}{\kappa}
\newcommand{\biprod}{\oplus}
\newcommand{\pinit}{0}
\newcommand{\pcoprod}{\mathbin{\dot{+}}}
\newcommand{\pprod}{\mathbin{\dot{\times}}}
\newcommand{\pbiprod}{\mathbin{\dot{\oplus}}}

\newcommand{\pcoproj}{\coproj}
\newcommand{\pproj}{\pi}
\newcommand{\quotP}{\mathbb{P}} 
\newcommand{\quot}[2]{{#1} \!/\! {#2}}
\newcommand{\Gen}{I} 
\newcommand{\TrI}{\mathbb{T}} 
\newcommand{\MD}{\cat{Co}} 
\newcommand{\BMD}{\cat{BCo}} 
\newcommand{\MDG}{\cat{Co}_{\mathsf{G}}} 
\newcommand{\BMDG}{\cat{BCo}_{\mathsf{G}}} 
\newcommand{\MDPh}{\cat{PhCo}} 
\newcommand{\BMDPh}{\cat{BPhCo}} 
\newcommand{\deff}[1]{\emph{#1}}
\newcommand{\indef}[1]{\emph{#1}}
\newcommand{\GP}{\mathsf{GP}}
\newcommand{\GPa}[1]{\mathrm{GP}({#1})} 
\newcommand{\plusI}[1]{\mathsf{GP}({#1})}
\newcommand{\plusIdag}[1]{\mathsf{GP}^{\dagger}({#1})}
\newcommand{\tens}{\widehat{\otimes}}
\newcommand{\dtens}{\tens} 
\newcommand{\obb}[1]{{\bm{#1}}}
\newcommand{\aalpha}{\obb{\alpha}}
\newcommand{\llambda}{\obb{\lambda}}
\newcommand{\ssigma}{\obb{\sigma}}
\newcommand{\rrho}{\obb{\rho}}

\usetikzlibrary{decorations.pathreplacing,decorations.markings,arrows.meta,backgrounds}
\pgfdeclarelayer{edgelayer}
\pgfdeclarelayer{nodelayer}
\pgfsetlayers{background,edgelayer,nodelayer,main}

\tikzstyle{cdot}=[circle, draw=black, fill=black!25, inner sep=.4ex] 

\tikzstyle{whitedot}=[circle, draw=black, fill=white, inner sep=.4ex]

\newcommand{\tinycup}{\smash{\raisebox{2pt}{\hspace{-2pt}\ensuremath{\begin{pic}[scale=0.2]
   \pgftransformscale{1.5} \draw[arrow=.6, scale = 1] (0,0) to[out=-90,in=-90,looseness=1.5] (1.5,0);
\end{pic}}}}}

\newcommand{\tinycap}{\smash{\raisebox{-3pt}{\hspace{-2pt}\ensuremath{\begin{pic}[scale=0.2, yscale=-1]
   \pgftransformscale{1.5} \draw[arrow=.6, scale = 1] (0,0) to[out=-90,in=-90,looseness=1.5] (1.5,0);
\end{pic}}}}}

\newcommand{\tinymultflip}[1][cdot]{
\smash{\raisebox{-2pt}{\hspace{-5pt}\ensuremath{\begin{pic}[scale=0.4,yscale=1]
    \node (0) at (0,0) {};
    \node[#1, inner sep=1.5pt] (1) at (0,0.55) {};
    \node (2) at (-0.5,1) {};
    \node (3) at (0.5,1) {};
    \draw (0.center) to (1.center);
    \draw (1.center) to [out=left, in=down, out looseness=1.5] (2.center);
    \draw (1.center) to [out=right, in=down, out looseness=1.5] (3.center);
    \node[#1, inner sep=1.5pt] (1) at (0,0.55) {};
\end{pic}
}\hspace{-3pt}}}}

\newcommand{\tinycounit}[1][cdot]{
\smash{\raisebox{-5pt}{\hspace{-3pt}\ensuremath{\begin{pic}[scale=0.4,yscale=1]
    \node (0) at (0,0) {};
    \node[#1, inner sep=1.5pt] (1) at (0,0.55) {};
    \draw (0.center) to (1.north);
        \node[#1, inner sep=1.5pt] (1) at (0,0.55) {};
\end{pic}
}\hspace{-1pt}}}}
\usetikzlibrary{decorations.pathreplacing,decorations.markings,arrows.meta,backgrounds,shapes}
\pgfsetlayers{background,edgelayer,nodelayer,main}
\tikzstyle{none}=[inner sep=0mm]
\tikzstyle{every loop}=[]
\tikzstyle{mark coordinate}=[inner sep=0pt,outer sep=0pt,minimum size=3pt,fill=black,circle]
\tikzset{arrow/.style={decoration={
    markings,
    mark=at position #1 with \arrow{>[length=2pt, width=3pt]}},
    postaction=decorate},
    reverse arrow/.style={decoration={
    markings,
    mark=at position #1 with {{\arrow{<[length=2pt, width=3pt]}}}},
    postaction=decorate}
}
\tikzstyle{box}=[draw,shape=rectangle,inner sep=2pt,minimum height=5mm,minimum width=6mm,fill=white] 
\tikzstyle{medium box}=[draw,shape=rectangle,inner sep=2pt,minimum height=5mm,minimum width=10mm,fill=white] 
\tikzstyle{dot}=[inner sep=0mm,minimum width=2mm,minimum height=2mm,draw,shape=circle]  
\tikzstyle{black dot}=[dot,fill=black]
\tikzstyle{white dot}=[dot,fill=white,,text depth=-0.2mm]
\tikzstyle{grey dot}=[dot,fill=black!25] 
\tikzstyle{corner1}=[box,fill=white, font=\footnotesize] %
\tikzstyle{corner2}=[dot,fill=white, font=\footnotesize] %
\tikzstyle{corner3}=[dot,fill=black!25, font=\footnotesize] %
\tikzstyle{corner4}=[dot,fill=black, font=\footnotesize] %
\tikzstyle{scalar}=[circle,draw,inner sep=0.5pt,font=\small] 
\tikzstyle{point}=[regular polygon,regular polygon sides=3,draw,scale=0.75,inner sep=-0.5pt,minimum width=9mm,fill=white,regular polygon rotate=180]\tikzstyle{copoint}=[regular polygon,regular polygon sides=3,draw,scale=0.75,inner sep=-0.5pt,minimum width=9mm,fill=white]
\tikzstyle{wide copoint}=[fill=white,draw,shape=isosceles triangle,shape border rotate=90,isosceles triangle stretches=true,inner sep=0pt,minimum width=1.5cm,minimum height=6.12mm]
\tikzstyle{wide point}=[fill=white,draw,shape=isosceles triangle,shape border rotate=-90,isosceles triangle stretches=true,inner sep=0pt,minimum width=1.5cm,minimum height=6.12mm,yshift=-0.0mm]
\tikzstyle{every picture}=[baseline=-0.25em,scale=0.5]
\tikzstyle{label}=[font=\footnotesize,text height=1ex, text depth=0.15ex]
\newenvironment{pic}[1][] {\begin{aligned}\begin{tikzpicture}[scale=2.0, font=\tiny,#1]}{\end{tikzpicture}\end{aligned}} 

\usepackage[foot]{amsaddr}
\theoremstyle{plain}
\newtheorem{theorem}{Theorem}[section]
\newtheorem{proposition}[theorem]{Proposition}
\newtheorem{corollary}[theorem]{Corollary}
\newtheorem{lemma}[theorem]{Lemma}

\theoremstyle{definition}
\newtheorem{definition}[theorem]{Definition}
\newtheorem{example}[theorem]{Example}
\newtheorem{examples}[theorem]{Examples}
\newtheorem{remark}[theorem]{Remark}

\title{Quotient Categories and Phases}
\author{Sean Tull}
\address{Department of Computer Science, University of Oxford 
\\ 
15 Parks Rd, Oxford, United Kingdom OX1 3QD}
\email{sean.tull@cs.ox.ac.uk}
\thanks{This work forms a part of the author's DPhil thesis, and was supported by EPSRC Studentship OUCL/2014/SET. We thank Chris Heunen and Dan Marsden for useful feedback and Martti Karvonen for a helpful discussion on isotropy.}

\begin{document}

\begin{abstract}
We study properties of a category after quotienting out a suitable chosen group of isomorphisms on each object. Coproducts in the original category are described in its quotient by our new weaker notion of a `phased coproduct'. We examine these and show that any suitable category with them arises as such a quotient of a category with coproducts. Motivation comes from projective geometry, and also quantum theory where they describe superpositions in the category of Hilbert spaces and continuous linear maps up to global phase. The quotients we consider also generalise those induced by categorical isotropy in the sense of Funk et al.
 
\end{abstract}

\maketitle

\section{Introduction}

In many mathematical situations one considers objects and maps only up to equivalence under some `trivial' isomorphisms. In doing so we pass from our original category to a quotient category $\catC \!/\!\! \sim$. Examples include projective geometry~\cite{coxeter2003projective} and quantum theory~\cite{abramskycoecke:categoricalsemantics}, in which vectors equal by a suitable constant factor are identified, as well as the more recent setting of quotienting out elements of \emph{isotropy} in a (small) category \cite{funk2012isotropy,funk2018higher}.

When our original category $\catC$ comes with desirable features such as (co)limits, it is  natural to ask what remains of these in its quotient category. Conversely, what properties of $\catC \!  /  \! \! \sim$ ensure that it arises as such a quotient, and can we use them to recover $\catC$?

In this article we address these questions for categories $\catC$ with coproducts. To do so we describe what remains of these coproducts in the quotient category $\catC \!  /  \! \! \sim$ using our new more general notion of a \emph{phased coproduct}. Roughly, these are coproducts whose induced morphisms $A + B \to C$ are unique only up to some coprojection-preserving isomorphism on $A + B$, which we call a \emph{phase}. Phased coproducts generalise to phased (co)limits, an elementary notion of weak (co)limit in a category which does not seem to have appeared elsewhere. Despite their generality, we show phased coproducts to be well-behaved, for example satisfying their own forms of uniqueness and associativity laws. 

 More surprisingly still, we establish a result providing a converse to the above situation; given any suitable category $\catC$ with phased coproducts, we construct a new category $\plusI{\catC}$ with coproducts from which it arises as such a quotient 
 $\catC \simeq \plusI{\catC} \!/\!\! \sim$.

 The $\GP$ construction is most natural when $\catC$ has \emph{phased biproducts}, generalising standard categorical biproducts, and comes with a suitably compatible monoidal structure $(\otimes, I)$. In particular it yields a precise correspondence between such monoidal categories and those with standard (co,bi)products along with a chosen group of \emph{global phase} scalars $\mathbb{P} \subseteq \Aut(I)$, which we quotient out to obtain $\catC$. 

These results have particular application to quantum theory, in which one often studies the quotient $\Hilb_\sim$ of the category $\Hilb$ of Hilbert spaces and continuous linear maps after identifying all global phases, here meaning scalars $z \in \mathbb{C}$ with $|z| = 1$. The category $\Hilb$ has nicer mathematical features, including biproducts which describe quantum superpositions, but only $\Hilb_\sim$ has a direct physical interpretation. Phased coproducts allow us to describe superpositions in the latter category, and to recover the former as 
\[
\Hilb \simeq \plusI{\Hilb_\sim}
\]
This last fact is useful for so-called \emph{reconstructions} of quantum theory~\cite{catreconstruction}.

In its most general form, our approach applies to any category $\catC$ with coproducts from which we quotient out a chosen subgroup of \emph{trivial isomorphisms} $\subseteq \Aut(A)$ on each object $A$, which must satisfy a simple lifting property. Such automorphisms may in fact be seen as a weakening of the notion of elements of isotropy in a small category, which come with well-behaved \emph{choices} of such liftings (though our results were obtained without this knowledge, for which we thank Martti Karvonen). Isotropy was originally introduced for Grothendieck toposes by Funk et al.~\cite{funk2012isotropy}, before being extended to small categories \cite{khan2017aspects,funk2018higher}, and algebraic theories \cite{hofstra2018isotropy}. Quotients under isotropy elements have also been studied extensively \cite{funk2018higher}. 

\subsection*{Future work}

In future it would be desirable to explore the significance of our results for isotropy theory. It remains to find a (small) category whose quotient under isotropy has non-trivial phased coproducts, or prove that none exists. If such quotients exist one may hope to characterise them via properties of their phases. The general $\GP$ construction, and its requirement of a \emph{phase generator} object, would also both be interesting to understand in the isotropy context. 

More broadly, phased (co)limits provide a new elementary categorical definition whose properties and connections with other notion of weak (2-)limit remain to be explored. Finally,  our quotients, which turn certain isomorphisms into identities, should be compared with \emph{localisations} in which certain morphisms are turned into isomorphisms~\cite{hirschhorn2009model}.  

\subsection*{Structure of article}

We begin by introducing phased coproducts and their properties in Section~\ref{sec:ph-coprod}, before meeting examples and our most general notion of quotient category in Section~\ref{sec:examplesinquotientcategories}. We describe the most general form of the $\GP$ construction in Section~\ref{sec:phToCoprod}, which requires the existence of a phase generator. However these technicalities fall away in the setting of monoidal categories which we treat in Section~\ref{sec:monoidal}. 

Following this, we study the construction in the context of other features of our motivating examples, by considering biproducts in Section~\ref{sec:phbiprod}, compact closure in Section~\ref{sec:compact_cats} and finally dagger categories in Section~\ref{sec:daggers}. Appendix \ref{app:universality} shows how the $\GP$ construction can be made functorial and given a universal property. 


\section{Phased Coproducts} \label{sec:ph-coprod}

Our central definition in this article is the following. 

\begin{definition} \label{def:ph_coprod} \label{not:phcoprod} \label{not:phcoproj}
In any category, a \deff{phased coproduct} of a pair of objects $A, B$ is an object $A \pcoprod B$ together with a pair of morphisms  $\pcoproj_A \colon A \to A \pcoprod B$ and $\pcoproj_B \colon {B \to A \pcoprod B}$ satisfying the following. Firstly, for any pair of morphisms $f \colon A \to C$, $g \colon B \to C$, there exists $h \colon {A \pcoprod B \to C}$ making the following commute:
\[
\begin{tikzcd}
A \rar{\coproj_A} \drar[swap]{f} & A \pcoprod B 
\dar[dashed]{h}
& B \lar[swap]{\coproj_B} \dlar{g} \\ 
& C &
\end{tikzcd} 
\]
Secondly, any pair of such morphisms $h,h'$ have that $h' = h \circ U$
\[
\begin{tikzcd}
 \arrow[loop left, "U",distance = 2em] A \pcoprod B \rar[shift right = 2, swap]{h'} \rar[shift left = 2]{h}   & C
\end{tikzcd}
\]
for some endomorphism $U$ of $A \pcoprod B$ which satisfies
\begin{align} \label{eq:phase-eq}
U \circ \pcoproj_A = \pcoproj_A
\qquad
U \circ \pcoproj_B = \pcoproj_B
\end{align}
We call any endomorphism $U$ of $A \pcoprod B$ satisfying \eqref{eq:phase-eq} a \deff{phase} for $A \pcoprod B$, and the morphisms $\pcoproj_A$, $\pcoproj_B$ \deff{coprojections}.
\end{definition}

The typical notion of a categorical coproduct $A + B$ is precisely a phased coproduct whose only phase is the identity morphism.
Straightforwardly extending the above, a \indef{phased coproduct} of any collection of objects $(A_i)_{i \in I}$ is defined as an object $A$ together with morphisms 
$(\coproj_i \colon A_i \to A)_{i \in I}$ satisfying the following.
 Firstly, for any collection of morphisms 
\[
\begin{tikzcd}
A_i \rar{f_i} & B
\end{tikzcd}
\]
 there exists $f \colon A \to B$ with $f \circ \coproj_i = f_i$ for all $i$. Furthermore, any such $f, f' \colon A \to B$ have $f' = f \circ U$ for some $U \colon A \to A$ satisfying $U \circ \pcoproj_i = \pcoproj_i$ for all $i$, which we call a \indef{phase}. A phased coproduct of finitely many objects $A_1, \dots, A_n$ is denoted $A_1 \pcoprod \dots \pcoprod A_n$.

Despite their generality, phased coproducts are surprisingly well-behaved. In particular they are unique up to (non-unique) isomorphism.

\begin{lemma} \label{lem:isoms}
Let $A$ and $B$ be phased coproducts of objects $(A_i)_{i \in I}$ with respective coprojections $\coproj_i \colon A_i \to A$ and $\mu_i \colon A_i \to B$ for $i\in I$. Then any morphism $f$ for which each diagram
\[
\begin{tikzcd}[row sep = small, column sep = small]
& A_i \arrow[dl, "\coproj_i", swap] \arrow[dr, "\mu_i"] & \\ 
A \arrow[rr,"f",swap] & & B 
\end{tikzcd}
\]
commutes is an isomorphism.
Conversely, any object $C$ with an isomorphism $g \colon {A \isomto C}$ forms a phased coproduct of the $A_i$ with coprojections $\nu_i := g \circ \coproj_i$.
\end{lemma}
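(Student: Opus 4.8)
The plan is to derive both halves directly from the two clauses of the phased-coproduct universal property, after first recording a small auxiliary fact: \emph{any phase is an isomorphism}, i.e.\ any endomorphism $U \colon A \to A$ with $U \circ \coproj_i = \coproj_i$ for all $i$ is invertible. This follows because $U$ and $\id{A}$ agree on all coprojections, so the second clause of Definition~\ref{def:ph_coprod} (in its $(A_i)_{i \in I}$ form) supplies a phase $U'$ with $\id{A} = U \circ U'$; applying the same to $U'$ gives a phase $U''$ with $\id{A} = U' \circ U''$; and then $U = U \circ (U' \circ U'') = (U \circ U') \circ U'' = U''$, so $U' \circ U = \id{A}$ as well, making $U'$ a two-sided inverse of $U$.

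For the first statement, I would use that $B$ is a phased coproduct of the $A_i$ with coprojections $\mu_i$: applying its defining property to the family $(\coproj_i \colon A_i \to A)_{i \in I}$ yields a morphism $g \colon B \to A$ with $g \circ \mu_i = \coproj_i$ for all $i$. Then $g \circ f \colon A \to A$ satisfies $(g \circ f) \circ \coproj_i = g \circ \mu_i = \coproj_i$, so $g \circ f$ is a phase for $A$ and hence an isomorphism by the auxiliary fact; symmetrically $f \circ g \colon B \to B$ satisfies $(f \circ g) \circ \mu_i = \mu_i$ and is an isomorphism. Since a morphism that is both split monic and split epic is an isomorphism, $f$ is an isomorphism.

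For the converse I would set $\nu_i := g \circ \coproj_i$ and verify the two clauses by transporting everything along the iso $g$. Given $(f_i \colon A_i \to B)_{i \in I}$, choose $h \colon A \to B$ with $h \circ \coproj_i = f_i$ using the phased coproduct $A$; then $h \circ g^{-1} \colon C \to B$ satisfies $(h \circ g^{-1}) \circ \nu_i = h \circ \coproj_i = f_i$. For the phase condition, suppose $p, q \colon C \to B$ satisfy $p \circ \nu_i = q \circ \nu_i$ for all $i$; then $p \circ g, q \circ g \colon A \to B$ agree on the coprojections of $A$, so $p \circ g = (q \circ g) \circ U$ for some phase $U$ of $A$, whence $p = q \circ (g \circ U \circ g^{-1})$, and $g \circ U \circ g^{-1} \colon C \to C$ fixes each $\nu_i$ and is therefore a legitimate phase for $C$.

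The only step requiring genuine thought is the auxiliary fact that phases are invertible; the rest is bookkeeping with the universal property. A related point to watch is the side on which the phase acts in Definition~\ref{def:ph_coprod} — the equation $h' = h \circ U$ places $U$ on the domain — which is exactly what lets the "split monic and split epic" conclusion in the first part close cleanly.
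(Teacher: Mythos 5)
Your proof is correct, but the first half is organised differently from the paper's. The paper proves the lemma \emph{without} first knowing that phases are invertible: it takes $g$ with $g \circ \mu_i = \coproj_i$, uses the second clause of Definition~\ref{def:ph_coprod} to get phases $U,V$ with $f \circ g \circ U = \id{B}$ and $g \circ U \circ f \circ V = \id{A}$, concludes that $g \circ U$ has both a left and a right inverse and hence that $f = (g\circ U)^{-1}$; invertibility of phases (Corollary~\ref{phase_iso}) is then deduced \emph{from} the lemma by taking $B = A$, $\mu_i = \coproj_i$, $f = U$. You reverse this order: you first prove directly from the definition that every coprojection-preserving endomorphism is invertible (your $U, U', U''$ argument, applying the second clause to the pairs $(U,\id{})$ and $(U',\id{})$ and cancelling), and then observe that $g \circ f$ and $f \circ g$ are such endomorphisms, so $f$ is split monic and split epic, hence iso. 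There is no circularity in your route, and the underlying mechanism is the same in both (apply the second clause to a composite against the identity, twice, and combine a left with a right inverse); what your decomposition buys is modularity — the content of Corollary~\ref{phase_iso} becomes a free-standing fact rather than a corollary of the lemma — at no real extra cost. You are also right to flag that the phase acts on the domain side ($h' = h \circ U$); your auxiliary argument respects this. The converse half of your proof (transporting the universal property along $g$ and conjugating the phase to $g \circ U \circ g^{-1}$) is essentially identical to the paper's.
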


\begin{proof}
For the first statement, let $g \colon B \to A$ with $g \circ \mu_i = \coproj_i$ for all $i$. Then $f \circ g$ preserves the $\mu_i$ and so there is some phase $U$ on $B$ with $f \circ g \circ U = \id{B}$. But then $g \circ U \circ f$ preserves the $\pcoproj_i$ and so there is a phase $V$ on $A$ with 
\[
g \circ U \circ f \circ V = \id{A}
\]
Hence $g \circ U$ has left and right inverses, making it and hence $f$ both isomorphisms.

For the second statement, given any tuple $(f_i \colon A_i \to D)^n_{i=1}$, let $f \colon A \to D$ satisfy $f \circ \pcoproj_i = f_i$ for all $i$. Then $f \circ g^{-1} \circ \nu_i = f_i$ for all $i$. Moreover, if $h \circ \nu_i = k \circ \pcoproj'_i$ for all $i$ then 
\[
h \circ g^{-1} \circ \pcoproj_i = k \circ g^{-1} \circ \pcoproj_i
\] 
for all $i$ and so for some phase $U$ on $A$ we have that $h = k \circ V$ where $V = g^{-1} \circ U \circ g$. Finally, $V$ is easily seen to preserve the $\nu_i$.
\end{proof}

\begin{corollary} \label{phase_iso}
Any phase of a phased coproduct is an isomorphism. 
\end{corollary}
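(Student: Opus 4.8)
The plan is to recognise a phase as a special instance of the comparison morphism already handled in Lemma~\ref{lem:isoms}. Let $A$ be a phased coproduct of objects $(A_i)_{i \in I}$ with coprojections $\coproj_i \colon A_i \to A$, and let $U \colon A \to A$ be a phase, so that $U \circ \coproj_i = \coproj_i$ for all $i$. The key observation is that $A$ is trivially a phased coproduct of the $A_i$ with coprojections $\mu_i := \coproj_i$ as well (indeed this is the converse part of Lemma~\ref{lem:isoms} with $g = \id{A}$, or just the definition read back). Hence for every $i$ the triangle
\[
\begin{tikzcd}[row sep = small, column sep = small]
& A_i \arrow[dl, "\coproj_i", swap] \arrow[dr, "\mu_i"] & \\
A \arrow[rr,"U",swap] & & A
\end{tikzcd}
\]
commutes, since $U \circ \coproj_i = \coproj_i = \mu_i$. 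Applying the first statement of Lemma~\ref{lem:isoms} with $B = A$ and $f = U$ then yields at once that $U$ is an isomorphism.

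There is essentially no obstacle: all the work was front-loaded into Lemma~\ref{lem:isoms}, and this corollary is just the degenerate case in which the two phased-coproduct structures on $A$ coincide. The only point requiring care is the trivial bookkeeping that a phase satisfies the commutativity hypothesis of that lemma, which is immediate from the definition of a phase.
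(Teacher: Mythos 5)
Your proposal is correct and is exactly the paper's intended argument: the corollary is deduced from the first statement of Lemma~\ref{lem:isoms} by taking both phased coproducts to be $A$ with the same coprojections and $f = U$, since a phase satisfies $U \circ \coproj_i = \coproj_i$ by definition.
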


Next we observe that phased coproducts are associative in a suitable sense.

\begin{proposition}(Associativity)\label{prop:assoc} 
For any phased coproduct $A \pcoprod B$, any phased coproduct ${(A \pcoprod B) \pcoprod C}$ forms a phased coproduct of $A, B, C$ with coprojections:
\[
\begin{tikzcd}[row sep = tiny]
A \arrow[dr,"\pcoproj_A",pos=0.2]& & \\ 
B \arrow[r, "\pcoproj_B"] & A \pcoprod B \arrow[r,"\pcoproj_{A\pcoprod B}"]& (A \pcoprod B) \pcoprod C\\ 
C \arrow[urr,"\pcoproj_C",swap] & &
\end{tikzcd}
\]
More generally $((A_1 \pcoprod A_2) \pcoprod \dots  ) \pcoprod A_n$ forms a phased coproduct $A_1 \pcoprod \dots \pcoprod A_n$.
\end{proposition}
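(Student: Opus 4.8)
The plan is to verify the two clauses of the indexed phased-coproduct definition for the object $P := (A \pcoprod B) \pcoprod C$ equipped with the coprojections $\iota_A := \pcoproj_{A \pcoprod B} \circ \pcoproj_A$, $\iota_B := \pcoproj_{A \pcoprod B} \circ \pcoproj_B$ and $\iota_C := \pcoproj_C$ (these are exactly the composites drawn in the statement), and then to deduce the general $n$-ary claim by induction on $n$.

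The existence clause is a routine two-stage factorisation. Given $f \colon A \to D$, $g \colon B \to D$ and $k \colon C \to D$, I would first use the phased-coproduct property of $A \pcoprod B$ to obtain $m \colon A \pcoprod B \to D$ with $m \circ \pcoproj_A = f$ and $m \circ \pcoproj_B = g$, then apply that of $(A \pcoprod B) \pcoprod C$ to the pair $(m,k)$ to obtain $h \colon P \to D$ with $h \circ \pcoproj_{A \pcoprod B} = m$ and $h \circ \pcoproj_C = k$; composing gives $h \circ \iota_A = f$, $h \circ \iota_B = g$ and $h \circ \iota_C = k$.

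The phase clause is where the work lies. Suppose $h, h' \colon P \to D$ agree after precomposition with each of $\iota_A, \iota_B, \iota_C$. Then $h \circ \pcoproj_{A \pcoprod B}$ and $h' \circ \pcoproj_{A \pcoprod B}$ are parallel maps $A \pcoprod B \to D$ agreeing on $\pcoproj_A$ and $\pcoproj_B$, so the phase clause for $A \pcoprod B$ yields a phase $V$ on $A \pcoprod B$ with $h' \circ \pcoproj_{A \pcoprod B} = h \circ \pcoproj_{A \pcoprod B} \circ V$. Applying the \emph{existence} clause of $(A \pcoprod B) \pcoprod C$ to the pair $(\pcoproj_{A \pcoprod B} \circ V,\ \pcoproj_C)$ then produces an endomorphism $W$ of $P$ with $W \circ \pcoproj_{A \pcoprod B} = \pcoproj_{A \pcoprod B} \circ V$ and $W \circ \pcoproj_C = \pcoproj_C$. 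A short calculation shows $h \circ W$ and $h'$ agree on $\pcoproj_{A \pcoprod B}$ and on $\pcoproj_C$, so the phase clause for $(A \pcoprod B) \pcoprod C$ supplies a phase $U'$ on $P$ with $U' \circ \pcoproj_{A \pcoprod B} = \pcoproj_{A \pcoprod B}$, $U' \circ \pcoproj_C = \pcoproj_C$, and $h' = h \circ W \circ U'$. Setting $U := W \circ U'$, one checks directly — using that $V$ fixes $\pcoproj_A$, $\pcoproj_B$ and that $U'$ fixes $\pcoproj_{A \pcoprod B}$, $\pcoproj_C$ — that $U$ fixes each of $\iota_A, \iota_B, \iota_C$; hence $U$ is the required phase for $P$. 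Note that only the two defining clauses are invoked: no appeal to the fact that phases are isomorphisms (Corollary~\ref{phase_iso}) is needed.

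For the general statement I would induct on $n$, the case $n = 2$ being immediate. The inductive step is the argument above verbatim, with ``$A \pcoprod B$'' replaced by the phased coproduct $Q := ((A_1 \pcoprod A_2) \pcoprod \dots) \pcoprod A_{n-1}$ of $A_1, \dots, A_{n-1}$ (using the general indexed phase clause for $Q$ in place of the binary one) and ``$C$'' replaced by $A_n$. I expect the only genuine obstacle to be the bookkeeping in the phase clause: arranging the two transports — first lifting a phase $V$ of $A \pcoprod B$ to an endomorphism $W$ of $P$ via the existence clause, then correcting by a phase $U'$ of $P$ over $A \pcoprod B$ and $C$ — in the right order and orientation so that simultaneously $h' = h \circ U$ holds and $U$ genuinely fixes all three coprojections.
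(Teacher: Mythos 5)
Your proposal is correct and follows essentially the same route as the paper: the same two-stage factorisation for existence, and for the phase clause the same three-step argument (extract a phase $V$ on $A \pcoprod B$, lift it via the existence clause to an endomorphism $W$ of $(A \pcoprod B) \pcoprod C$, then correct by a phase $U'$ of the outer phased coproduct so that the composite $W \circ U'$ is the required phase), with only notational differences. The explicit induction you give for the $n$-ary case is simply what the paper compresses into ``the $n$-ary case being similar.''
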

\begin{proof}
We prove the first case, with the $n$-ary case being similar. 

For any morphisms $f, g, h$ from $A, B, C$ to $D$ respectively, let $k \colon A \pcoprod B \to D$ satisfy $k \circ \pcoproj_A = f$ and $k \circ \pcoproj_B = g$. Then any morphism
\[
\begin{tikzcd}
(A \pcoprod B) \pcoprod C
\rar{t}
& D 
\end{tikzcd}
\]
with $t \circ \pcoproj_{A \pcoprod B} = k$ and  $t \circ \pcoproj_C = h$ composes with the morphisms above to give $f, g, h$ respectively. For uniqueness, suppose that $t'$ is another such morphism. Then there is a phase $U$ on $A \pcoprod B$ with $t' \circ \pcoproj_{A \pcoprod B} = t \circ \pcoproj_{A \pcoprod B} \circ U$. Now let $V$ be an endomorphism of $(A \pcoprod B) \pcoprod C$ with 
\[
V \circ \pcoproj_{A \pcoprod B} = \pcoproj_{A \pcoprod B} \circ U \qquad V \circ \pcoproj_C = \pcoproj_C
\]
Then immediately we have $t \circ V \circ \pcoproj_{A \pcoprod B} = t' \circ \pcoproj_{A \pcoprod B}$ and $t \circ V \circ \pcoproj_C = t' \circ \pcoproj_C$. So there is some $W$ preserving $\pcoproj_{A \pcoprod B}$ and $\pcoproj_C$ with $t = (t \circ V) \circ W$. Finally $V \circ W$ preserves each of the proposed coprojections as required.
\end{proof}

Let us now consider a phased coproduct of an empty collection of objects, which by definition is precisely the following. In any category, a \indef{phased initial object} is an object $\pinit$ for which every object $A$ has a morphism $\pinit \to A$, and such that for any pair of morphisms $a, b \colon \pinit \to A$ there is an endomorphism $U$ of $\pinit$ with $b = a \circ U$. In fact this notion typically coincides with a familiar one. 

\begin{proposition} \label{prop:phase_init_unit}
In a category with binary phased coproducts, any phased initial object $0$ is an initial object and each coprojection $\pcoproj_A \colon A \to A \pcoprod \pinit$ is an isomorphism.
\end{proposition}
\begin{proof}
We first show that $\pcoproj_A$ is an isomorphism. 
Let 
\[
\begin{tikzcd}
A \pcoprod \pinit \rar{f} & A
\end{tikzcd}
\]
with $f \circ \pcoproj_A = \id{A}$ and $f \circ \pcoproj_{\pinit}$ being any morphism $\pinit \to A$. Then $f$ makes $\pcoproj_A$ split monic. 
Because $\pinit$ is phased initial, it has an endomorphism $z$ with  
$\pcoproj_A \circ f \circ \pcoproj_0 = \pcoproj_0 \circ z$, which is an isomorphism by Lemma~\ref{lem:isoms}. 
Next let $g$ be an endomorphism of $A \pcoprod \pinit$ with $g \circ \pcoproj_A = \pcoproj_A$ and $g \circ \pcoproj_{\pinit} = \pcoproj_{\pinit} \circ z^{-1}$. Then it may be readily verified that, by construction, $U := \pcoproj_A \circ f \circ g$ preserves $\pcoproj_0$ and $\pcoproj_A$. Hence $U$ is a phase and so an isomorphism, making $\pcoproj_A$ split epic and hence an isomorphism also. 

We now show that $\pinit$ is initial. Given $a, b \colon \pinit \to A$ let $g, h \colon A \pcoprod \pinit \to A$ with
\[
g \circ \pcoproj_\pinit = a \qquad h \circ \pcoproj_\pinit = b \qquad g \circ \pcoproj_A = \id{A} = h \circ \pcoproj_A
\]
Then $g = \pcoproj_A^{-1} = h$ and so $a = g \circ \pcoproj_\pinit = h \circ \pcoproj_\pinit = b$. 
\end{proof}

\begin{corollary} \label{all_phase_coprod} 
A category has phased coproducts of all finite collections of objects precisely when it has binary phased coproducts and an initial object.
\end{corollary}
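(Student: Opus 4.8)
The plan is to assemble the result from the three preceding structural facts, treating the arities $n = 0$, $n = 1$, and $n \geq 2$ separately. First I would observe that an ordinary initial object $0$ is automatically a phased initial object: given $a, b \colon 0 \to A$ we may take $U = \id{0}$ since $a = b$ already. Thus the hypothesis ``an initial object'' supplies a phased coproduct of the empty collection, handling $n = 0$.

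Next, for $n = 1$ I would note the trivial observation that any object $A$ is a phased coproduct of the single-object family $(A)$ with coprojection $\id{A}$: for $f \colon A \to C$ take $h = f$, and if $h \circ \id{A} = h' \circ \id{A}$ then $h = h'$, so $\id{A}$ serves as the required phase. This case uses no hypotheses at all.

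For the main case $n \geq 2$, I would proceed by induction on $n$ using Proposition~\ref{prop:assoc}. The base case $n = 2$ is the assumed existence of binary phased coproducts. For the inductive step, given $A_1, \dots, A_n$ with $n \geq 3$, form the phased coproduct $A_1 \pcoprod \dots \pcoprod A_{n-1}$ by the induction hypothesis, then form the binary phased coproduct $(A_1 \pcoprod \dots \pcoprod A_{n-1}) \pcoprod A_n$; Proposition~\ref{prop:assoc} (in its stated $n$-ary form $((A_1 \pcoprod A_2) \pcoprod \dots) \pcoprod A_n$) tells us this is a phased coproduct of $A_1, \dots, A_n$ with the evident iterated coprojections.

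There is essentially no serious obstacle here: all of the genuine content — that nested binary phased coproducts genuinely satisfy the $n$-ary universal property, including the uniqueness-up-to-phase clause — was already discharged in Proposition~\ref{prop:assoc}. The only point requiring a moment's care is the boundary behaviour, namely recognising that a plain initial object already qualifies as a phased initial object (so the empty collection is covered) and that the singleton case needs no hypotheses; both are immediate. So the proof will be short, amounting to little more than citing Proposition~\ref{prop:assoc} together with these two remarks.
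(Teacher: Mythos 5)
Your proof is correct and matches the paper's intended argument: the corollary is stated as an immediate consequence of Proposition~\ref{prop:assoc} (iterated binary phased coproducts give the $n$-ary case), with the initial object covering the empty collection and the singleton case being trivial. Your explicit handling of the $n=0$ and $n=1$ boundary cases is just a spelled-out version of what the paper leaves implicit.
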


Thanks to this result, we will often only need to refer to binary phased coproducts in what follows.

\begin{remark}(Phased Limits) \label{rem:phasedproducts}
We may have defined phased products $A \pprod B$\label{not:phproduct} and phased terminal objects by dualising the above definitions, but coproducts will be more natural for our later applications to monoidal categories. 

 In fact one can straightforwardly define a general theory of \emph{phased (co)limits}, by saying that a diagram $D \colon \catJ \to \catC$ has a phased (co)limit in $\catC$ if the category of (co)cones over $D$ has a phased terminal (resp.~initial) object. However we will not pursue more general phased limits here. 
\end{remark}


\section{Examples in Quotient Categories} \label{sec:examplesinquotientcategories} 

Let us now see how phased coproducts arise naturally. 

\begin{definition}
By a choice of \deff{trivial isomorphisms} on a category $\cta$ we mean a choice, for each object $A$, of a subgroup $\TrI_A$\label{not:trivisom} of the group of isomorphisms $A \isomto A$ such that for all $f \colon A \to B$ and $q \in \TrI_B$ there exists $p \in \TrI_A$ making the following commute: 
\begin{equation} \label{eq:triv-isom}
\begin{tikzcd}
A \rar{f} \dar[dashed,swap]{p} & B \dar{q} \\
A \rar[swap]{f} & B
\end{tikzcd}
\end{equation}
We call a choice of trivial isomorphisms \indef{transitive}  when, conversely, for all such morphisms $f$ and every $p \in \TrI_A$ we have $f \circ p = q \circ f$ for some $q \in \TrI_B$. With or without transitivity, such a choice defines a congruence $\sim$ on $\cta$ given on morphisms $f, g \colon A \to B$ by 
\begin{equation} \label{eq:triv-isom-cong}
f \sim g \quad \text{ if } \quad  f = g \circ p \text{ for some $p \in \TrI_A$}
\end{equation}
In fact this congruence suffices to recover $\TrI_A$ as $\{f \colon A \to A \mid f \sim \id{A}\}$, and so we often equate a choice of trivial isomorphisms with its congruence. 

We write $\quot{\cta}{\sim}$\label{not:quotcat} for the category whose morphisms are equivalence classes $[f]_\tc$ of morphisms $f$ in $\cta$ under $\sim$. There is a wide full functor $[-]_\tc \colon \cta \to \quot{\cta}{\sim}$ given by taking equivalence classes.
\end{definition}

\begin{lemma} \label{lem:generalrecipeforphcoprod}
Let $\cta$ be a category with finite coproducts and a choice of trivial isomorphisms. Then $\quot{\cta}{\sim}$ has finite phased coproducts. Moreover $[-]_\tc$ sends coproducts in $\cta$ to phased coproducts in $\quot{\cta}{\sim}$. 
\end{lemma}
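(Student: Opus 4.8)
The plan is to prove both clauses at once by showing directly that, for \emph{any} finite family $(A_i)_{i \in I}$ of objects of $\catC$, any coproduct $A$ of the $A_i$ in $\catC$ with coprojections $\kappa_i \colon A_i \to A$ becomes, under the functor $[-]_\tc$, a phased coproduct of the $A_i$ in $\quot{\catC}{\sim}$ with coprojections $[\kappa_i]_\tc$. Taking $I$ a two-element set then shows $\quot{\catC}{\sim}$ has binary phased coproducts; taking $I = \emptyset$ shows $[0]_\tc$ is a phased initial object; and since $\catC$ has all finite coproducts, this already exhibits a phased coproduct of every finite family, which is the first claim, while the second claim---that $[-]_\tc$ sends coproducts to phased coproducts---is exactly this statement. (One could alternatively assemble the general case from the binary and empty cases via Proposition~\ref{prop:phase_init_unit} and Corollary~\ref{all_phase_coprod}, but the direct argument is no longer.)

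For the first clause of Definition~\ref{def:ph_coprod}: given morphisms $[f_i]_\tc \colon A_i \to C$ in $\quot{\catC}{\sim}$, I would choose representatives $f_i$ in $\catC$, form the induced morphism $f \colon A \to C$ with $f \circ \kappa_i = f_i$ for all $i$, and note that $[f]_\tc \circ [\kappa_i]_\tc = [f \circ \kappa_i]_\tc = [f_i]_\tc$ since $[-]_\tc$ is a functor. For $I = \emptyset$ this is just the existence of the (unique) map out of the initial object of $\catC$.

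For the phase clause, suppose $[h]_\tc, [h']_\tc \colon A \to C$ satisfy $[h]_\tc \circ [\kappa_i]_\tc = [h']_\tc \circ [\kappa_i]_\tc$ for all $i$. Unwinding the congruence, this says that for each $i$ there is $p_i \in \TrI_{A_i}$ with $h' \circ \kappa_i = h \circ \kappa_i \circ p_i$. Let $U \colon A \to A$ be the morphism induced by the family $\kappa_i \circ p_i \colon A_i \to A$, so that $U \circ \kappa_i = \kappa_i \circ p_i$ for all $i$. Since $\kappa_i \circ p_i \sim \kappa_i$, we get $[U]_\tc \circ [\kappa_i]_\tc = [\kappa_i \circ p_i]_\tc = [\kappa_i]_\tc$, so $[U]_\tc$ preserves the coprojections; and since $h \circ U \circ \kappa_i = h \circ \kappa_i \circ p_i = h' \circ \kappa_i$ for every $i$, the uniqueness part of the coproduct universal property in $\catC$ gives $h \circ U = h'$ outright, whence $[h]_\tc \circ [U]_\tc = [h']_\tc$. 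Thus $[U]_\tc$ is the required phase. For $I = \emptyset$ one simply takes $U = \id{0}$, since $h = h'$ already in $\catC$.

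I expect no genuine obstacle here beyond careful bookkeeping. The one point to watch is the direction of the congruence---that $f \sim g$ means $f = g \circ p$ with $p$ a trivial isomorphism on the \emph{domain}---since it is precisely this that makes $\kappa_i \circ p_i \sim \kappa_i$ and lets the individual trivial isomorphisms $p_i \in \TrI_{A_i}$ be glued into a single endomorphism $U$ via the universal property of the coproduct. A pleasant consequence is that this $U$ lives already in $\catC$, collapsing the phase clause to an honest equality $h \circ U = h'$ rather than merely $h' \sim h \circ U$.
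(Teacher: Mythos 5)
Your proof is correct and follows essentially the same route as the paper: the phase is manufactured exactly as there, by inducing $U$ from the family $\kappa_i \circ p_i$ via the universal property in $\catC$, so that $h \circ U = h'$ holds on the nose and $[U]_\tc$ preserves the coprojections. The only difference is organizational: you treat arbitrary finite families directly, whereas the paper handles the binary case and the initial object (the general finite case then following via Corollary~\ref{all_phase_coprod}).
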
 


\begin{proof}
Any initial object in $\cta$ is initial in $\quot{\cta}{\sim}$. For any $[f]_\tc \colon A \to C, [g]_\tc \colon B \to C$, the morphism $h \colon A + B \to C$ with $h \circ \coproj_A = f$ and $h \circ \coproj_C =g$ certainly has $[h]_\tc \circ [\pcoproj_A]_\tc = [f]_\tc$ and $[h]_\tc \circ [\pcoproj_B]_\tc = [g]_\tc$. Given any other such $[h']_\tc$, we have $h' \circ \pcoproj_A = f \circ p$ and $h \circ \pcoproj_B = g \circ q$ for some $p \in \TrI_A$ and $q \in \TrI_B$. Then $h = h' \circ U$ where $U \circ \pcoproj_A = \pcoproj_A \circ p$ and $U \circ \pcoproj_B = \pcoproj_B \circ q$, with $[U]_\tc$ preserving the $[\pcoproj_A]_\tc$ and $[\pcoproj_B]_\tc$ in $\quot{\cta}{\sim}$. 
\end{proof}

Our motivating examples are the following.

\begin{example} \label{example:vecspac}
Let $\VecSp_{}$ be the category of complex vector spaces and linear maps, and choose as trivial isomorphisms on $V$ all linear maps $e^{i \theta} \cdot \id{V}$ for $\theta \in [0,2\pi)$. Then the induced congruence is 
\begin{equation} \label{eq:HilbRelation}
f \sim g \quad \text{   if   } \quad  f = e^{i \theta} \cdot g
\end{equation}
for some such $\theta$. $\VecSp_{}$ has coproducts given by direct sum $V \biprod W$ of vector spaces, together with the obvious inclusions $\coproj_V \colon V \to V \biprod W$ and $\coproj_W \colon W \to V \biprod W$, and so these become phased coproducts in $\VecP := \quot{\VecSp}{\sim}$. Phases on $V \pbiprod W$ in this category are precisely $\sim$-equivalence classes of linear automorphisms of the form
\[
U = 
  \begin{pmatrix}
    \id{V} & 0 \\
    0 & e^{i \theta} \cdot \id{W} 
  \end{pmatrix}
\]
\end{example}

\begin{example} \label{example:hilbP}
The above example is relevant to quantum theory after restricting from $\VecP$ to $\HilbP$, the quotient of the category $\Hilb$ of Hilbert spaces and continuous linear maps by the above equivalence relation. Again $\Hilb$ has coproducts given by direct sum of Hilbert spaces, and these form phased coproducts in $\HilbP$ with phases of the above form. 
\end{example}

\begin{example} \label{example:proj-geom}
Let $\FVecSp_{k}$ be the category of finite-dimensional $k$-vector spaces, for some field $k$. Now choose as trivial isomorphisms all maps of the form $\lambda \cdot \id{V}$ for any non-zero $\lambda$, and let $\VecProj := \quot{\FVecSp_{k}}{\sim}$. Morphisms here are linear maps up to an overall scalar $\lambda$. Identifying vectors $\psi \colon k \to V$ with the same span in this way leads to \emph{projective geometry}~\cite{coxeter2003projective}. In particular automorphisms of $k^n$ in $\VecProj$ are precisely \emph{homographies} on the projective space $\mathsf{PG}(n,k)$. Note however that $\VecProj$ differs from usual projective geometry by including zeroes and non-injective maps. 
\end{example}

\begin{example}
Let $G$ be an abelian group and let $\GSet$ be the category of sets $A$ equipped with a group action $a \mapsto g \cdot a$, with morphisms being maps $f \colon A \to B$ which are equivariant i.e.~satisfy $f(g \cdot a) = g \cdot f(a)$ for all $g \in G, a \in A$. Choose as trivial isomorphisms on $A$ all maps of the form $g \cdot (-) \colon A \to A$ for some $g \in G$. Then the quotient $\quot{\GSet}{\sim}$ consists of equivalence classes of maps under $f \sim f'$ whenever there is some $g \in G$ with $f(a) = f'(g \cdot a)$ for all $a \in A$. It has finite phased coproducts given by coproducts in $\GSet$, i.e.~disjoint union.
\end{example}

\begin{example}(Isotropy)
It is natural to strengthen our definition of trivial isomorphism $\TrI$ to require that each $q \in \TrI_B$ comes with a \emph{choice}, for each morphism $f \colon A \to B$, of lifting $p = q_f \in \TrI_A$ making \eqref{eq:triv-isom} commute. To be well-behaved this choice should further satisfy $g \circ q_{f \circ g} = q_f \circ g$ for all $g \colon C \to A$.

In fact this coincides with an established notion due to Funk et al.~\cite{funk2012isotropy,funk2018higher}. 
In any small category $\cta$, an automorphism $q$ of $B$ along with choices of such automorphisms $q_f$ is called an \emph{element of isotropy} at $B$. More formally, it is a natural automorphism of the forgetful functor $\quot{\cta}{B} \to \cta$. The collections of isotropy elements $Z(B)$ at each object $B$ extend to a presheaf of groups $Z(\cta) \colon \cta^\op \to \Grp$ called the \emph{isotropy group} of $\cta$. 

Thus any (small) category $\cta$ comes with a canonical choice of trivial isomorphisms $\TrI_A=Z(A)$ provided by its isotropy group. Quotienting out by the congruence $\sim$ is a major aspect of isotropy theory \cite{funk2018higher}. When $\cta$ has coproducts this provides a source of examples of categories $\quot{\cta}{\sim}$ with phased coproducts. 

We note however that in many cases the phases in $\quot{\catD}{\sim}$ are in fact trivial, giving this category plain coproducts. This owes to the fact that in $\catD$ for all $p \in Z(A)$, $q \in Z(B)$, the automorphism $p + q$ belongs to $Z(A + B)$, but the classes $[p + q]$ of such morphisms are always precisely the phases in $\quot{\catD}{\sim}$.  This property holds, for example, of isotropy in the (opposite) categories $\Grp^\op$, $\Ring^\op$ of (finitely presentable) groups or rings, as well as the other examples of algebraic theories considered in \cite{hofstra2018isotropy}. We leave open the problem of determining a quotient category under isotropy with non-trivial phased coproducts, or proving that none exists. 
\end{example}

Each of our first three examples of trivial isomorphisms are transitive, giving their induced phased coproducts a property which will be useful in what follows. First, let us say that a morphism $f \colon A \pcoprod B \to C \pcoprod D$ is \indef{diagonal} when $f \circ \pcoproj_A = \pcoproj_C \circ g$ and $f \circ \pcoproj_B = \pcoproj_D \circ h$ for some $g, h$. 

\begin{definition} \label{def:trans_phases}
A category with phased coproducts has \deff{transitive phases}  when every diagonal morphism $f \colon A \pcoprod B \to C \pcoprod D$ and phase $U$ of $A \pcoprod B$ has 
\[
f \circ U = V \circ f
\]
for some phase $V$ of $C \pcoprod D$.
\end{definition}

\section{From Phased Coproducts to Coproducts} \label{sec:phToCoprod}

We now wish to find a converse construction to Lemma~\ref{lem:generalrecipeforphcoprod}, allowing us to exhibit any suitable category with phased coproducts as a quotient of one with coproducts.

\begin{definition} \label{def:justplusI}
Let $\ctb$ be a category with finite phased coproducts and a distinguished object $\Gen$. The category $\plusI{\ctb}$ is defined as follows:
\begin{itemize}
\item 
objects are phased coproducts of the form $\obb{A} = A \pcoprod \Gen$ in $\ctb$ (each including as data the objects $A, \Gen$ and morphisms $\coproj_A$, $\coproj_\Gen$);
\item 
morphisms $f \colon \obb{A} \to \obb{B}$ are diagonal morphisms in $\ctb$ with $f \circ \pcoproj_\Gen = \pcoproj_\Gen$.
\[
\begin{tikzcd}
\obb{A} \rar{f} & \obb{B} \\ 
A \uar{\pcoproj_A} \rar[dashed,swap]{\exists} & B \uar[swap]{\pcoproj_B}
\end{tikzcd}
\qquad
\begin{tikzcd}
\obb{A} \arrow[rr, "f"] & & \obb{B} \\ 
& I \arrow[ul,"\pcoproj_\Gen"] \arrow[ur,swap,"\pcoproj_\Gen"]& 
\end{tikzcd}
\]
\end{itemize}
\end{definition}
Such diagonal morphisms are straightforwardly checked to be closed under composition, making  $\mathsf{GP}(\ctb)$ a well-defined category with composition and identity morphisms being the same as in $\ctb$. Our notation $\mathsf{GP}$ stands for `global phases', see Section~\ref{sec:monoidal}. 

A sufficient condition on $I$ for $\plusI{\catC}$ to have coproducts is the following. Call a morphism $f \colon A \pcoprod B \to C$ \emph{phase monic} when $f \circ U = f \circ V \implies U = V$ for all phases $U, V$. Similarly a morphism $g \colon C \to A \pcoprod B$ is \emph{phase epic} when $U \circ g = V \circ g \implies U = V$ for phases $U, V$.

\begin{definition} \label{def:phase-gen}
Let $\catC$ be a category with finite phased coproducts. We say an object $\Gen$ is a \emph{phase generator} when:
\begin{itemize}
\item 
any $\triangledown \colon I \pcoprod I \to I$ with $\triangledown \circ \pcoproj_1 = \id{I} = \triangledown \circ \pcoproj_2$ is phase monic;
\item 
any diagonal monomorphism $m \colon I \pcoprod I \to A \pcoprod B$ is phase epic.

\end{itemize}
\end{definition}

Let us say that phased coproducts or coproducts in a category are \emph{monic} whenever all coprojections are monic.
In this case we write $[-] \colon \plusI{\catC} \to \catC$ for the functor sending $\obb{A} \mapsto A$ and $f \colon \obb{A} \to \obb{B}$ to the unique $[f] \colon A \to B$ with $f \circ \pcoproj_A = \pcoproj_B \circ [f]$.

\begin{theorem} \label{thm:localToGlobal}
Let $\catC$ be a category with finite monic phased coproducts with transitive phases and a phase generator $\Gen$. Then $\plusI{\catC}$ has monic finite coproducts. Moreover, it has a choice of trivial isomorphisms 
\[
\TrI_\obb{A} := \{ U \colon \obb{A} \to \obb{A} \mid U \text{ is a phase}\}
\]
whose congruence $\sim$ induces an equivalence of categories
\[
\catC \simeq \quot{\plusI{\catC}}{\sim}
\]
\end{theorem}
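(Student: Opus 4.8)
The plan is to verify the three separate claims of Theorem~\ref{thm:localToGlobal} in turn: that $\plusI{\catC}$ has finite coproducts, that these are monic, and that the quotient by the specified phases recovers $\catC$ up to equivalence. For the coproduct claim, I would first construct a candidate. Given objects $\obb{A} = A \pcoprod I$ and $\obb{B} = B \pcoprod I$, the natural guess for $\obb{A} + \obb{B}$ is $(A \pcoprod B) \pcoprod I$, using Proposition~\ref{prop:assoc} to treat it as a phased coproduct $A \pcoprod B \pcoprod I$. The coprojection $\obb{A} \to \obb{A}+\obb{B}$ should be the diagonal morphism induced by $\pcoproj_A \colon A \to A \pcoprod B$ on the first component and $\id{I}$ on the $I$-component; one must check this is well-defined as a morphism of $\plusI{\catC}$, i.e.\ diagonal and fixing $\pcoproj_I$, which is where transitivity of phases and monicity of the phased coproducts get used to pin down the morphism. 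Given $f \colon \obb{A} \to \obb{C}$ and $g \colon \obb{B} \to \obb{C}$, the cotuple is induced using the phased-coproduct universal property of $(A \pcoprod B)\pcoprod I$; the real work is \emph{uniqueness}. Two induced maps differ by a phase $U$ on $(A \pcoprod B) \pcoprod I$, and I must show $U = \id{}$, which is exactly what the phase generator axioms are designed to force: the first axiom (the codiagonal $\triangledown \colon I \pcoprod I \to I$ is phase monic) handles the contribution of the two copies of $I$ sitting inside $A\pcoprod B \pcoprod I$ via the coprojections from the $\obb{A}$ and $\obb{B}$ sides, and the second axiom (diagonal monos out of $I \pcoprod I$ are phase epic) transfers this rigidity back along the relevant monomorphisms. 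Monicity of the resulting coproduct coprojections should follow from monicity of the phased coprojections in $\catC$ together with the phase-generator hypothesis; I would also need the initial object of $\plusI{\catC}$, which should be $\obb{0} = I$ itself (or $0 \pcoprod I \cong I$ by Proposition~\ref{prop:phase_init_unit}), so that Corollary~\ref{all_phase_coprod}'s analogue gives all finite coproducts.

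For the equivalence $\catC \simeq \quot{\plusI{\catC}}{\sim}$, I would first check that $\TrI$ as defined is a legitimate choice of trivial isomorphisms on $\plusI{\catC}$: each $\TrI_{\obb{A}}$ is a subgroup of $\Aut(\obb{A})$ (phases are closed under composition and, by Corollary~\ref{phase_iso}, are isomorphisms, and the inverse of a phase is a phase), and the naturality-type condition $p_{\obb{B}} \circ f = f \circ p_{\obb{A}}$ for phases follows from transitivity of phases applied to the diagonal morphism $f$. Then I would define a functor $\quot{\plusI{\catC}}{\sim} \to \catC$ essentially as $[-]$ from the paragraph before the theorem, sending $\obb{A} \mapsto A$ and $[f]_\tc \mapsto [f]$; this is well-defined precisely because $f \sim f'$ means $f = f' \circ U$ for a phase $U$, whence $[f] = [f']$ since $U$ fixes $\pcoproj_A$. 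Functoriality and compatibility with composition are routine. For the reverse direction, or to show this functor is an equivalence directly, I would show it is \textbf{fully faithful} and \textbf{essentially surjective}. Essential surjectivity needs every object $A$ of $\catC$ to arise as some $A \pcoprod I$ --- this requires the convention/observation that $\plusI{\catC}$'s objects $A \pcoprod I$ range over (representatives of) all objects, which presumably uses that $I$ can be taken so that $A \cong A \pcoprod I$ when needed, or more carefully that $\catC$ is recovered only up to equivalence so it suffices that the $A$-component is arbitrary. Faithfulness says $[f] = [g]$ in $\catC$ implies $[f]_\tc = [g]_\tc$, i.e.\ $f$ and $g$ differ by a phase; fullness says every $\catC$-morphism $A \to B$ lifts to a diagonal $I$-fixing morphism $A \pcoprod I \to B \pcoprod I$.

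I expect the \textbf{main obstacle} to be the uniqueness half of the coproduct universal property in $\plusI{\catC}$ --- precisely the step that consumes both phase-generator axioms. The subtlety is that a phase $U$ on $(A \pcoprod B) \pcoprod I$, a priori only known to fix the outer coprojections $\pcoproj_{A\pcoprod B}$ and $\pcoproj_I$, must be shown to act trivially in a way compatible with being an identity \emph{in $\plusI{\catC}$}, and relating this to the codiagonal $\triangledown \colon I \pcoprod I \to I$ requires identifying the correct diagonal monomorphism $I \pcoprod I \to (A\pcoprod B)\pcoprod I$ (built from the two $I$-legs coming through $A$ and through $B$) and checking it is genuinely monic so that the phase-epic axiom applies. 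Getting the bookkeeping of nested phased coproducts right here, and making sure transitivity of phases is invoked at each point where a phase must be slid past a diagonal morphism, is the delicate part; the rest of the argument is largely diagram-chasing of the kind already rehearsed in Lemmas~\ref{lem:isoms} and \ref{lem:generalrecipeforphcoprod}.
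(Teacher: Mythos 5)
Your overall architecture (candidate coproduct $(A \pcoprod B) \pcoprod \Gen$, canonical coprojections via Proposition~\ref{prop:assoc}, initial object $0 \pcoprod \Gen$, and the routine verification that the phases form trivial isomorphisms with $[-]$ inducing the equivalence) matches the paper, and you correctly identify uniqueness of cotupling as the crux. But your plan for that crux has a genuine gap. You propose to show the discrepancy phase $U$ on $(A \pcoprod B) \pcoprod \Gen$ is the identity by applying the second phase-generator axiom to a ``diagonal monomorphism $\Gen \pcoprod \Gen \to (A \pcoprod B) \pcoprod \Gen$ built from the two $I$-legs coming through $A$ and through $B$''. Inside $(A \pcoprod B) \pcoprod \Gen$ there is only \emph{one} copy of $\Gen$: both coproduct coprojections $\pcoproj_{A,\Gen}$ and $\pcoproj_{B,\Gen}$ restrict to the \emph{same} morphism $\pcoproj_\Gen$, so your map is forced to be $\pcoproj_\Gen \circ \triangledown$, which factors through $\Gen$, is in general not monic (the codiagonal is not monic, e.g.\ in $\VecSp$), is not diagonal in the sense required by Definition~\ref{def:phase-gen}, and is automatically fixed by every phase of the ternary decomposition (since any such phase preserves $\pcoproj_\Gen$), so the relation $U \circ m = m$ carries no information. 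Moreover the target ``$U = \id{}$'' is too strong for an arbitrary phase with $f = g \circ U$: already in $\HilbP$ such a $U$ is a class of $\mathrm{diag}(e^{ia}, e^{ib}, e^{ic})$ and is not pinned down by that equation alone; what must be proved is $f = g$.

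The missing idea, which is exactly how the paper resolves this, is to pass to an auxiliary phased coproduct $(A \pcoprod \Gen) \pcoprod (B \pcoprod \Gen)$, in which the two copies of $\Gen$ really are distinct, together with a morphism $h \colon (A \pcoprod \Gen) \pcoprod (B \pcoprod \Gen) \to (A \pcoprod B) \pcoprod \Gen$ restricting to $\pcoproj_{A,\Gen}$ and $\pcoproj_{B,\Gen}$ (which is split epic), and the diagonal morphism $j \colon \Gen \pcoprod \Gen \to (A \pcoprod \Gen) \pcoprod (B \pcoprod \Gen)$ whose legs are the two unit coprojections; $j$ is a coprojection by associativity, hence monic. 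One then obtains $g \circ h = f \circ h \circ U$ for a phase $U$ on the auxiliary object, slides $U$ past $j$ by transitivity to a phase $V$ on $\Gen \pcoprod \Gen$, uses the identity $h \circ j = \pcoproj_\Gen \circ \triangledown$ and the hypothesis that $f, g$ agree on the coprojections to deduce $\triangledown = \triangledown \circ V$, whence $V = \id{}$ by the first generator axiom, then $U \circ j = j$ and the second axiom give $U = \id{}$, and finally $f = g$ since $h$ is split epic. Without this detour through $(A \pcoprod \Gen) \pcoprod (B \pcoprod \Gen)$ the two generator axioms have nothing to act on. Two smaller corrections: transitivity is not needed to define the coprojections (they are canonical coprojections from Proposition~\ref{prop:assoc}); it is needed in the \emph{existence} half of cotupling, where the map produced by the universal property agrees with $g$ on $\pcoproj_{B,\Gen}$ only up to a phase $U$ of $B \pcoprod \Gen$ and must be corrected by composing with the inverse of a phase $V$ satisfying $\pcoproj_{B,\Gen} \circ U = V \circ \pcoproj_{B,\Gen}$ --- a step your sketch passes over.
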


\begin{proof}
Note that any initial object $0$ in $\ctb$ forms an initial object $\obb{0} = 0 \pcoprod \Gen$ in $\plusI{\ctb}$. Indeed any morphism $f \colon \obb{0} \to \obb{A}$ preserves the $\pcoproj_\Gen$, but by Proposition~\ref{prop:phase_init_unit} $\pcoproj_\Gen \colon \Gen \to \obb{0}$ is an isomorphism, making $f$ unique. 

Now for any pair of objects $\obb{A} = A \pcoprod \Gen$, $\obb{B} = B \pcoprod \Gen$ in $\plusI{\ctb}$ we claim that any phased coproduct $A \pcoprod B$ and object 
\[
\obb{A} + \obb{B} := (A \pcoprod B) \pcoprod \Gen
\]
and morphisms $\pcoproj_{A,\Gen} \colon \obb{A} \to \obb{A} + \obb{B}$ and $\pcoproj_{B,\Gen} \colon \obb{B} \to \obb{A} + \obb{B}$ with $[\pcoproj_{A,\Gen}] = \pcoproj_A$ and $[\pcoproj_{B,\Gen}] = \pcoproj_B$ forms their coproduct in $\plusI{\ctb}$. These morphisms are special kinds of coprojections by associativity (Proposition~\ref{prop:assoc}) and so in particular are monic. We need to show that for all morphisms $f, g$ belonging to $\plusI{\ctb}$ that in $\ctb$ there is a unique $h$ making the following commute:
\[
\begin{tikzcd}
& (A \pcoprod B) \pcoprod \Gen \arrow["h", d, dotted] & \\
 A \pcoprod \Gen \arrow[swap,"f", r] \arrow[ur, "\pcoproj_{A,\Gen}"]  & C \pcoprod \Gen & \Gen \pcoprod B \arrow[ul, swap,"\pcoproj_{B,\Gen}"] \arrow["g", l] 
\end{tikzcd}
\]
We start with the existence property. By Proposition~\ref{prop:assoc} $(A \pcoprod B) \pcoprod \Gen$ also forms a phased coproduct of $A \pcoprod \Gen$ and $B$ via $\pcoproj_{A,\Gen}$ and $\pcoproj_B = \pcoproj_{A, \Gen} \circ \pcoproj_B$. So there exists $k \colon (A \pcoprod B) \pcoprod \Gen \to C \pcoprod I$ with $k \circ \pcoproj_{A,\Gen} = f$ and $k \circ \pcoproj_B = g \circ \pcoproj_B$. Then
\[
k \circ \pcoproj_\Gen = k \circ \pcoproj_{A,\Gen} \circ \pcoproj_\Gen = f \circ \pcoproj_\Gen = g \circ \pcoproj_\Gen
\]
also, and so $k \circ \pcoproj_{B,\Gen} = g \circ U$ for some phase $U$ on $B \pcoprod \Gen$. By transitivity there then is a phase $V$ with respect to $\pcoproj_{A, \Gen}$, $\pcoproj_B$ for which $\pcoproj_{B,\Gen} \circ U = V \circ \pcoproj_{B,\Gen}$. Then $h= k \circ V^{-1}$ is easily seen to have the desired properties.

We next verify uniqueness. Suppose that there exists $f, g$ with $f \circ \pcoproj_{A,\Gen} = g \circ \pcoproj_{A,\Gen}$ and $f \circ \pcoproj_{B,\Gen} = g \circ \pcoproj_{B,\Gen}$. Consider morphisms $h, j$ as in the diagram 
\[
\begin{tikzcd}
\Gen \pcoprod \Gen \dar[swap]{\triangledown} \arrow[loop left, "V", distance=2em] \rar{j} & (A \pcoprod \Gen) \pcoprod (B \pcoprod \Gen) \arrow[loop right, "U", distance=4em] \dar{h} & \\ 
\Gen \rar[swap]{\pcoproj_\Gen} & (A \pcoprod B) \pcoprod \Gen \arrow[r, shift left=2.5, "f"] \arrow[r, shift right=2.5, "g", swap] & C \pcoprod \Gen
\end{tikzcd}
\]
with
\begin{align*}
&h \circ \pcoproj_{A \pcoprod \Gen} = \pcoproj_{A,\Gen}
&j \circ \pcoproj_1 = \pcoproj_{A,\Gen} \circ \pcoproj_\Gen
\\
&h \circ \pcoproj_{B \pcoprod \Gen} = \pcoproj_{B,\Gen}
&j \circ \pcoproj_2 = \pcoproj_{B,\Gen} \circ \pcoproj_\Gen
\end{align*}
Then $g \circ h = f \circ h \circ U$ for some phase $U$ on $(A \pcoprod \Gen) \pcoprod (\Gen \pcoprod B)$. Now $h$ is split epic, since $h \circ k$ is a phase whenever $k$ is a morphism in the opposite direction defined via any of the obvious inclusions of $A, B$ and $I$ into each object. Hence it suffices to prove that $U=\id{}$.

Since $j$ is diagonal we have $U \circ j = j \circ V$ for some phase $V$ as above. We first show that $V = \id{\Gen \pcoprod \Gen}$. Composing with coprojections shows that $h \circ j  = \coproj_\Gen \circ \triangledown$ for some $\triangledown$ with $\triangledown \circ \pcoproj_1 = \triangledown \circ \pcoproj_2 = \id{\Gen}$. Then we have
\begin{align*}
\pcoproj^{C \pcoprod \Gen}_\Gen \circ \triangledown 
&= 
g \circ \pcoproj_\Gen \circ \triangledown
\\
&=
g \circ h \circ j
\\ 
&=
f \circ h \circ U \circ j
\\
&=
f \circ h \circ j \circ V
=
\pcoproj^{C \pcoprod \Gen}_\Gen \circ \triangledown \circ V
\end{align*}
and so $\triangledown  = \triangledown \circ V$. Then since $I$ is a phase generator $V = \id{\Gen \pcoprod \Gen}$, so that $U \circ j = j$. Now again by associativity of phased coproducts $j$ is a coprojection and so is monic, and then since it is diagonal and $I$ is a phase generator we have $U = \id{}$.

For the second statement, note that these $\TrI_\obb{A}$ are a valid choice of trivial isomorphisms, satisfying \eqref{eq:triv-isom} since all morphisms in $\plusI{\ctb}$ are diagonal in $\ctb$. Moreover we indeed have $[f]_\tc = [g]_\tc$ whenever $[f] = [g]$ for the functor $[-] \colon \plusI{\ctb} \to \ctb$. Hence $[-]$ restricts along $[-]_\tc$ to an equivalence $\quot{\plusI{\ctb}}{\sim} \simeq \ctb$.
\end{proof}

\section{Monoidal Categories} \label{sec:monoidal}

Several of our motivating examples of phased coproducts belong to categories with a compatible monoidal structure, and we will see that in this setting the $\plusI{-}$ construction is a natural one.

First, say that a functor $F \colon \catC \to \catC'$ \emph{strongly preserves} phased coproducts if for every phased coproduct $A \pcoprod B$ with coprojections $\pcoproj_A, \pcoproj_B$ in $\catC$, $F(A \pcoprod B)$ is a phased coproduct with coprojections $F(\pcoproj_A)$, $F(\pcoproj_B)$ and moreover has that every phase is of the form $F(U)$ for some phase $U$ of $A \pcoprod B$.

\begin{definition} \label{def:distirb}
We say that phased coproducts in a monoidal category are \emph{distributive} when they are strongly preserved by the functors $A \otimes (-)$ and $(-) \otimes A$, for all objects $A$.
\end{definition}

Thanks to Lemma~\ref{lem:isoms} the requirement on $A \otimes (-)$ is equivalent to requiring that some (and hence any) morphism 
\begin{equation} \label{eq:distrib-isomorphism}
 \begin{tikzcd}
A \otimes B \pcoprod A \otimes C \rar{f} & A \otimes (B \pcoprod C)
\end{tikzcd}
\qquad
\text{with}
\qquad
\begin{tabular}{cc}
$f \circ \pcoproj_{A \otimes B} = \id{A} \otimes \pcoproj_B$
\\ 
$f \circ \pcoproj_{A \otimes C} = \id{A} \otimes \pcoproj_C$
\end{tabular}
\end{equation}
 is an isomorphism, and moreover has that every phase on its domain is of the form $f^{-1} \circ (\id{A} \otimes U) \circ f$ for some phase $U$ of $B \pcoprod C$.

In the case of actual coproducts, this specialises to the usual notion of distributivity, with the phase condition redundant.  

\begin{remark}
Our definition of distributivity, requiring from strong preservation that \emph{every} phase of $A \otimes B \pcoprod A \otimes C$ arises from one of $B \pcoprod C$, may indeed appear rather strong. However we will find it to hold in very general quotient categories, and in Section \ref{sec:compact_cats} to be automatic in any compact category.  
\end{remark}

Now the trivial isomorphisms in our main examples may be defined naturally using their monoidal structure as follows. Recall that in any monoidal category $\catC$ the morphisms $s \in \catC(I,I)$, called \emph{scalars}, form a commutative monoid. They come with a left action $f \mapsto s \cdot f$  and right action $f \mapsto f \cdot s$ on each homset $\catC(A,B)$. Let us call a scalar $s$ \emph{central} when these coincide, i.e.~we have $s \cdot f = f \cdot s$, for all morphisms $f$ in $\catC$. In a braided or symmetric monoidal category every scalar is central.

\begin{definition}
 By a choice of \deff{global phases} in a monoidal category $\cta$ we mean a collection $\mathbb{P}$ of invertible, central scalars closed under composition and inverses. 
\end{definition}

Any such global phase group $\mathbb{P}$ determines a choice of trivial isomorphisms on $\catC$ by setting $\TrI_A := \{ p \cdot \id{A} \mid p \in \mathbb{P}\}$. Then $\TrI_I = \mathbb{P}$, the induced congruence $\sim$ is given by  
\begin{equation} \label{eq:quotient_rule_general}
f \sim g \text{ if } f = u \cdot g \text{ for some $u \in \mathbb{P}$}
\end{equation}
and we write $\cta_\quotP := \quot{\cta}{\sim}$.

\begin{lemma} \label{lem:globaltolocalmonoidal}
Let $\catC$ be a monoidal category with distributive finite coproducts and a choice of global phases $\mathbb{P}$. Then $\catC_\quotP$ is a monoidal category with distributive finite phased coproducts with transitive phases.
\end{lemma}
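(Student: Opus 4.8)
The plan is to establish, in order, that $\catC_\quotP$ is monoidal, that it has finite phased coproducts, that these are distributive, and that they have transitive phases; throughout I take the trivial isomorphisms to be $\TrI_A = \{p\cdot\id{A}\mid p\in\mathbb P\}$, so that $\simP$ is the relation \eqref{eq:quotient_rule_general}. For monoidality I would first check that $\simP$ is a monoidal congruence: if $f = u\cdot f'$ and $g = v\cdot g'$ with $u,v\in\mathbb P$, then since the global phases are central the scalar action is compatible with $\otimes$, so $f\otimes g = (uv)\cdot(f'\otimes g')$ with $uv\in\mathbb P$, whence $f\otimes g\simP f'\otimes g'$. Thus $\otimes$ descends to $\catC_\quotP$, and together with the images of the associator and unitors of $\catC$ this makes $\catC_\quotP$ monoidal with $[-]_\tc$ a strict monoidal functor (coherence transports along $[-]_\tc$). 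In particular the scalars of $\catC_\quotP$ are again central, and the choice $\TrI_A$ is transitive in the sense discussed before Definition~\ref{def:trans_phases}, since $f\circ(p\cdot\id{A}) = p\cdot f = (p\cdot\id{B})\circ f$.

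By Lemma~\ref{lem:generalrecipeforphcoprod}, $\catC_\quotP = \quot{\catC}{\sim}$ has finite phased coproducts, a phased coproduct of $A,B$ being $[A+B]$ with coprojections $[\coproj_A],[\coproj_B]$. The key auxiliary fact, on which the remaining steps rest, is an explicit description of the phases: if $\Phi$ is a phase of $[A+B]$, then choosing a representative $\phi$ and unwinding $\simP$ gives $\phi\circ\coproj_A = p\cdot\coproj_A$ and $\phi\circ\coproj_B = q\cdot\coproj_B$ for some $p,q\in\mathbb P$; by the universal property of the coproduct in $\catC$, $\phi$ is then the unique endomorphism of $A+B$ with these composites, which I will denote $\delta^{A,B}_{p,q}$, and conversely every $[\delta^{A,B}_{p,q}]$ is a phase.

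For distributivity, fix $A,B,C$. Distributivity of the coproducts of $\catC$ gives an isomorphism $d\colon A\otimes B + A\otimes C\isomto A\otimes(B+C)$ with $d\circ\coproj_{A\otimes B} = \id{A}\otimes\coproj_B$ and $d\circ\coproj_{A\otimes C} = \id{A}\otimes\coproj_C$; since $[-]_\tc$ is strict monoidal, $[d]$ is an isomorphism in $\catC_\quotP$ of exactly the shape of \eqref{eq:distrib-isomorphism} for the functor $A\otimes(-)$. By the criterion following Definition~\ref{def:distirb}, it then remains only to show that every phase of the domain $[A\otimes B + A\otimes C]$ equals $[d]^{-1}\circ(\id{A}\otimes U)\circ[d]$ for a phase $U$ of $[B+C]$. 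Writing an arbitrary such phase as $[\delta^{A\otimes B,\,A\otimes C}_{p,q}]$ and taking $U := [\delta^{B,C}_{p,q}]$ (a phase of $[B+C]$), a short computation in $\catC$ — comparing composites with $\coproj_{A\otimes B}$ and $\coproj_{A\otimes C}$ and using that centrality makes $\id{A}\otimes(s\cdot k) = s\cdot(\id{A}\otimes k)$ — shows $d^{-1}\circ(\id{A}\otimes\delta^{B,C}_{p,q})\circ d = \delta^{A\otimes B,\,A\otimes C}_{p,q}$, as required; the functor $(-)\otimes A$ is symmetric. This phase half of distributivity is the one point that genuinely needs care — essentially because the description $\delta^{A,B}_{p,q}$ must be matched up under conjugation by $[d]$ — whereas monoidality of the quotient and the mere existence of its phased coproducts are immediate.

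Finally, for transitive phases, let $[f]\colon[A+B]\to[C+D]$ be diagonal and $[U]$ a phase of $[A+B]$, say $[U] = [\delta^{A,B}_{r,s}]$. Unwinding diagonality gives $f\circ\coproj_A = p\cdot(\coproj_C\circ g)$ and $f\circ\coproj_B = q\cdot(\coproj_D\circ h)$ in $\catC$ for some $p,q\in\mathbb P$ and $g,h$. Taking $V := [\delta^{C,D}_{r,s}]$ (a phase of $[C+D]$) and comparing composites with $\coproj_A$ and $\coproj_B$, and using commutativity of the scalar monoid together with the compatibility of the scalar action with composition, gives $f\circ\delta^{A,B}_{r,s} = \delta^{C,D}_{r,s}\circ f$ in $\catC$, hence $[f]\circ[U] = V\circ[f]$. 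Thus $\catC_\quotP$ has transitive phases, completing the argument.
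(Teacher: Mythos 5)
Your proof is correct and takes essentially the same route as the paper's own (much terser) proof: centrality of $\mathbb{P}$ makes $\sim$ a monoidal congruence, Lemma~\ref{lem:generalrecipeforphcoprod} supplies the phased coproducts, and distributivity and transitivity are inherited from $\catC$. Your explicit description of phases of $[A+B]$ as classes of the maps $\delta^{A,B}_{p,q}$ determined by $p\cdot\coproj_A$ and $q\cdot\coproj_B$ is simply the natural way of filling in the details the paper declares ``easy to see''.
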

\begin{proof}
Since the $p \in \mathbb{P}$ are central we have $f \sim h$, $g \sim k$ $\implies$ $f \otimes g \sim h \otimes k$. Hence $\otimes$ restricts from $\catC$ to $\catC_\quotP$, making the latter category monoidal. By Lemma~\ref{lem:generalrecipeforphcoprod} coproducts in $\catC$ become phased coproducts in $\catC_\quotP$. It is easy to see that transitivity holds and that distributivity is inherited from $\catC$.
\end{proof}

\begin{examples} \label{example:global-phases}
$\VecSp_{}$ and $\Hilb$ are monoidal categories with distributive finite coproducts, and our earlier choice of trivial isomorphisms correspond to the global phase group $\mathbb{P} = \{ e^{i \theta} \mid \theta \in [0,2\pi)\}$ in both cases. Similarly $\FVecSp_{k}$ is monoidal and its choice of trivial isomorphisms comes from the global phase group $\mathbb{P} = \{ \lambda \in k \mid  \lambda \neq 0\}$.
\end{examples}

We now wish to give a converse to this result, showing that $\plusI{\catC}$ is a monoidal category with a canonical choice of global phases. When $\catC$ is monoidal we'll always take as chosen object $\Gen$ its monoidal unit. To prove monoidality of $\plusI{\catC}$  we will use the following general result from~\cite[Prop.~2.6, Lemma~2.7]{kock2008elementary}. 

\begin{lemma}\label{lem:mon_cat_helpful}
A monoidal structure on a category $\catC$ is equivalent to specifying:
\begin{itemize}
\item  a bifunctor $\otimes$ and natural isomorphism $\alpha$ satisfying the pentagon equation;
\item 
an object $I$ such that every morphism $A \otimes I \to B \otimes I$ and $I \otimes A \to I \otimes B$ is of the form $g \otimes \id{I}$, $\id{I} \otimes g$ respectively, for some unique $g \colon A \to B$;
\item 
an isomorphism $\beta \colon I \otimes I \isomto I$.
\end{itemize}
\end{lemma}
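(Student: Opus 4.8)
The plan is to recognise this as Kock's characterisation of \emph{Saavedra units} and to reconstruct its proof; the forward implication is routine and the content lies in the reverse one. Given an ordinary monoidal category, one takes $\otimes$, $\alpha$ and $I$ as given, sets $\beta := \lambda_I = \rho_I$ (equal by the standard coherence lemma), and observes that the stated cancellation property for $I$ is precisely the statement that the endofunctors $(-) \otimes I$ and $I \otimes (-)$ are fully faithful: fullness is the existence of $g$ with $g \otimes \id{I} = f$, obtained as $g = \rho_B \circ f \circ \rho_A^{-1}$ using naturality of $\rho$, and faithfulness is its uniqueness (and dually on the left using $\lambda$). So every monoidal structure yields the listed data.

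For the converse, suppose we are given $\otimes$, a natural isomorphism $\alpha$ satisfying the pentagon, an object $I$ with the cancellation property (equivalently, $(-)\otimes I$ and $I \otimes (-)$ fully faithful), and an isomorphism $\beta \colon I \otimes I \isomto I$. Define $\rho_A \colon A \otimes I \to A$ to be the unique morphism with
\[
\rho_A \otimes \id{I} = (\id{A} \otimes \beta) \circ \alpha_{A,I,I} \colon (A \otimes I) \otimes I \to A \otimes I,
\]
which exists and is unique by cancellation on the right, and dually let $\lambda_A \colon I \otimes A \to A$ be the unique morphism with $\id{I} \otimes \lambda_A = (\beta \otimes \id{A}) \circ \alpha_{I,I,A}^{-1}$. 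Naturality of $\rho$ (resp. $\lambda$) follows from naturality of $\alpha$ and bifunctoriality of $\otimes$ together with the uniqueness clause of the cancellation property. Invertibility is then automatic: $\rho_A \otimes \id{I}$ is a composite of isomorphisms, and a fully faithful functor reflects isomorphisms, so $\rho_A$ is invertible; likewise $\lambda_A$.

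It remains to verify the triangle identity $(\id{A} \otimes \lambda_B) \circ \alpha_{A,I,B} = \rho_A \otimes \id{B}$ and, as a special case, $\lambda_I = \rho_I = \beta$; once these hold, pentagon plus triangle \emph{is} the definition of a monoidal structure (Mac Lane's coherence theorem, in the sharpened form where Kelly's redundant axioms are dropped). To prove the triangle one uses the cancellation trick: since $(-) \otimes I$ is faithful, it suffices to check the equation after tensoring on the right with $\id{I}$, which turns both sides into morphisms out of $((A \otimes I) \otimes B) \otimes I$ built from $\alpha$, $\beta$ and the defining equations of $\lambda$ and $\rho$; expanding and repeatedly applying the pentagon equation and naturality of $\alpha$ brings the two sides to a common form. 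The equalities $\lambda_I \otimes \id{I} = \beta \otimes \id{I} = \rho_I \otimes \id{I}$ are obtained in the same way, whence $\lambda_I = \rho_I = \beta$ by faithfulness. Finally, the two constructions are mutually inverse: starting from a monoidal category, the triangle identity with $B = I$ shows that the $\rho_A$ recovered from the data coincides with the original (and $\lambda_I = \beta$ directly), while conversely the unitors built from the data are forced by their defining equations.

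I expect the triangle-identity verification to be the main obstacle — it is the one genuinely non-formal step, requiring a careful diagram chase that threads the pentagon equation and naturality of $\alpha$ through the definitions of $\lambda$ and $\rho$, with the cancellation property used to pull the argument back from the tensored-up morphisms. Everything else — the forward direction, naturality, invertibility, and bijectivity of the correspondence — is bookkeeping.
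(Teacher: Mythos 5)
The paper does not prove this lemma at all --- it is quoted from Kock --- so the comparison is with Kock's argument, which your outline otherwise tracks closely: you define $\rho_A$ and $\lambda_A$ by exactly the cancellation property Kock uses, and your forward direction, the naturality and invertibility of the constructed unitors, and the mutual-inverse check (given the triangle and $\lambda_I=\rho_I=\beta$) are all fine. The problem is the mechanism you propose for the one step you yourself flag as essential. You plan to prove the triangle by tensoring the desired equation on the right with $\id{I}$ and then ``expanding using the defining equations of $\lambda$ and $\rho$'' via the pentagon and naturality of $\alpha$. That reduction cannot be completed: after tensoring on the right (or on the left) with $\id{I}$, both sides are morphisms built on the object string $A,I,B,I$ (respectively $I,A,I,B$), and the pentagon and naturality of $\alpha$ never alter this underlying string. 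The defining equation of $\rho_A$ lives on the subterm $(A\otimes I)\otimes I$ (the letter $A$ followed by two $I$'s) and that of $\lambda_B$ on $I\otimes(I\otimes B)$ (two $I$'s followed by $B$); neither pattern occurs in $A,I,B,I$ or in $I,A,I,B$, so no instance of either defining equation --- and hence no occurrence of $\beta$ --- can ever be brought into play. Faithfulness of $-\otimes I$ is not the engine of this step.

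The working argument (essentially Kock's) pads in the \emph{middle} rather than outside. Take the pentagon for the quadruple $(A,I,I,B)$, compose with $\id{A}\otimes(\id{I}\otimes\lambda_B)$, apply the defining equations of $\lambda_B$ and $\rho_A$ together with naturality of $\alpha$ (with respect to $\beta$ in the middle argument and $\lambda_B$ in the last), and cancel the isomorphism $\alpha_{A,I,B}$; this yields
\[
(\id{A\otimes I}\otimes\lambda_B)\circ\alpha_{A\otimes I,I,B}=(\rho_A\otimes\id{I})\otimes\id{B}.
\]
Then naturality of $\alpha$ with respect to $\rho_A$ gives
$(\id{A}\otimes\lambda_B)\circ\alpha_{A,I,B}\circ\bigl((\rho_A\otimes\id{I})\otimes\id{B}\bigr)=(\rho_A\otimes\id{B})\circ\bigl((\rho_A\otimes\id{I})\otimes\id{B}\bigr)$,
and since $\rho_A$ is invertible (as you note, $-\otimes I$ reflects isomorphisms) the padding cancels, giving the triangle. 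With the triangle in hand, $\rho_I=\beta$ and then $\lambda_I=\beta$ follow from the defining equations and faithfulness, exactly as you intend, and the rest of your bijectivity argument goes through unchanged.
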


We will also repeatedly use the following elementary observation.

\begin{lemma} \label{lem:useful}
Suppose that we have morphisms
\[
\begin{tikzcd}
A \pcoprod B \rar{f} & E & C \pcoprod D \lar[swap]{g}
\end{tikzcd}
\]
with $f \circ \pcoproj_A = g \circ \pcoproj_C \circ h$ and $f \circ \pcoproj_B = g \circ \pcoproj_D \circ k$ for some $h, k$. Then $f = g \circ l$ for some diagonal morphism $l$.
\end{lemma}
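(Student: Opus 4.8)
The plan is to build the diagonal morphism $l$ in two stages: first produce a diagonal $l'\colon A \pcoprod B \to C \pcoprod D$ with the right composites into $C \pcoprod D$, then correct it by a phase so that it actually satisfies $f = g \circ l$.

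First I would invoke the existence half of the phased coproduct universal property for $A \pcoprod B$: applied to the pair of morphisms $\pcoproj_C \circ h \colon A \to C \pcoprod D$ and $\pcoproj_D \circ k \colon B \to C \pcoprod D$, it yields some $l' \colon A \pcoprod B \to C \pcoprod D$ with $l' \circ \pcoproj_A = \pcoproj_C \circ h$ and $l' \circ \pcoproj_B = \pcoproj_D \circ k$. By construction $l'$ is diagonal. Now compute $g \circ l' \circ \pcoproj_A = g \circ \pcoproj_C \circ h = f \circ \pcoproj_A$, and likewise $g \circ l' \circ \pcoproj_B = g \circ \pcoproj_D \circ k = f \circ \pcoproj_B$, so $g \circ l'$ and $f$ have the same composites with both coprojections of $A \pcoprod B$.

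Next I would apply the uniqueness half of the phased coproduct property: there is a phase $U$ on $A \pcoprod B$ with $f = (g \circ l') \circ U$. Setting $l := l' \circ U$ gives $g \circ l = g \circ l' \circ U = f$ immediately. It remains to check $l$ is still diagonal, which is where the defining property $U \circ \pcoproj_A = \pcoproj_A$, $U \circ \pcoproj_B = \pcoproj_B$ of a phase is used: $l \circ \pcoproj_A = l' \circ U \circ \pcoproj_A = l' \circ \pcoproj_A = \pcoproj_C \circ h$, and symmetrically $l \circ \pcoproj_B = \pcoproj_D \circ k$, so $l$ is diagonal as required.

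Since every step is a direct application of the two halves of Definition~\ref{def:ph_coprod} together with the definition of a phase, there is no real obstacle here — the only point worth stating carefully is that the phase correction $U$ does not destroy diagonality, which is exactly guaranteed by $U$ fixing the coprojections. This is why the paper records the proof as immediate.
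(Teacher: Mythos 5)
Your proof is correct and is exactly the argument the paper has in mind when it calls the lemma immediate: build a diagonal $l'$ from the existence half of the phased-coproduct property, observe $g \circ l'$ and $f$ agree on the coprojections, and absorb the resulting phase $U$ into $l := l' \circ U$, which stays diagonal because $U$ fixes the coprojections. Nothing to add.
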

\begin{proof}
Let $m \colon A \pcoprod B \to C \pcoprod D$ have $m \circ \coproj_A = \coproj_C \circ h$ and $m \circ \coproj_B = \coproj_D \circ k$. Then $f = g \circ m \circ U$ for some phase $U$, giving $l = m \circ U$ as the desired morphism. 
\end{proof}

In our proofs we will make use of the \emph{graphical calculus} for monoidal categories (see~\cite{selinger2011survey} for details). Morphisms $f \colon A \to B$ are drawn as 
$\begin{pic}
  \node[box] (f) at (0,0) {$f$};
  \draw (f.south) to +(0,-.1)node[below] {$A$};
  \draw (f.north) to +(0,.1) node[above] {$B$};
\end{pic}$, with
\[
\scalebox{0.8}{\begin{tikzpicture}
	\begin{pgfonlayer}{nodelayer}
		\node [style=box] (0) at (0, -0) {$\id{A}$};
		\node [style=none] (1) at (0, 1.5) {};
		\node [style=none] (2) at (0, -1.5) {};
		\node [style=none] (3) at (2, -0) {$=$};
		\node [style=none] (4) at (4, 1.5) {};
		\node [style=label] (5) at (0, -2) {$A$};
		\node [style=label] (6) at (4, -2) {$A$};
		\node [style=label] (7) at (4, 2) {$A$};
		\node [style=label] (8) at (0, 2) {$A$};
		\node [style=none] (9) at (4, -1.5) {};
	\end{pgfonlayer}
	\begin{pgfonlayer}{edgelayer}
		\draw (0) to (1.center);
		\draw (0) to (2.center);
		\draw (4.center) to (9.center);
	\end{pgfonlayer}
\end{tikzpicture}}

\qquad \qquad
\scalebox{0.8}{\begin{tikzpicture}
	\begin{pgfonlayer}{nodelayer}
		\node [style=box] (0) at (0, -0) {$g \circ f$};
		\node [style=none] (1) at (0, 1.5) {};
		\node [style=none] (2) at (0, -1.5) {};
		\node [style=none] (3) at (2, -0) {$=$};
		\node [style=none] (4) at (4, 1.5) {};
		\node [style=box] (5) at (4, -0.75) {$f$};
		\node [style=box] (6) at (4, 0.75) {$g$};
		\node [style=label] (7) at (0, -2) {$A$};
		\node [style=label] (8) at (4, -2) {$A$};
		\node [style=label] (9) at (4, 2) {$C$};
		\node [style=label] (10) at (0, 2) {$C$};
		\node [style=none] (11) at (4, -1.5) {};
	\end{pgfonlayer}
	\begin{pgfonlayer}{edgelayer}
		\draw (0) to (1.center);
		\draw (5) to (6);
		\draw (6) to (4.center);
		\draw (0) to (2.center);
		\draw (11.center) to (5);
	\end{pgfonlayer}
\end{tikzpicture}}

\qquad \qquad
\scalebox{0.8}{\begin{tikzpicture}
	\begin{pgfonlayer}{nodelayer}
		\node [style=box] (0) at (0, -0) {$f \otimes g$};
		\node [style=none] (1) at (0, 1.5) {};
		\node [style=none] (2) at (0, -1.5) {};
		\node [style=none] (3) at (2, -0) {$=$};
		\node [style=none] (4) at (4, 1.5) {};
		\node [style=box] (5) at (4, -0) {$f$};
		\node [style=label] (6) at (0, -2) {$A \otimes C$};
		\node [style=label] (7) at (4, -2) {$A$};
		\node [style=label] (8) at (4, 2) {$B$};
		\node [style=label] (9) at (0, 2) {$B \otimes D$};
		\node [style=none] (10) at (4, -1.5) {};
		\node [style=box] (11) at (6, -0) {$g$};
		\node [style=label] (12) at (6, 2) {$D$};
		\node [style=none] (13) at (6, -1.5) {};
		\node [style=label] (14) at (6, -2) {$C$};
		\node [style=none] (15) at (6, 1.5) {};
	\end{pgfonlayer}
	\begin{pgfonlayer}{edgelayer}
		\draw (0) to (1.center);
		\draw (0) to (2.center);
		\draw (5) to (4.center);
		\draw (5) to (10.center);
		\draw (11) to (13.center);
		\draw (11) to (15.center);
	\end{pgfonlayer}
\end{tikzpicture}}

\]
The (identity on) the monoidal unit object $I$ is the empty picture, so that morphisms $I \to A$ and $A \to I$ are boxes with no inputs or outputs, respectively.
In a braided or symmetric setting, the swap map $\sigma$ is depicted $\begin{aligned}\begin{pic}[scale=.25]
  \draw (0,-.5) to[out=80,in=-100] (1,.5);
  \draw (1,-.5) to[out=100,in=-80] (0,.5);
\end{pic}\end{aligned}$.

\begin{theorem} \label{thm:constr_is_monoidal}
Let $\catC$ be a monoidal category with distributive monic finite phased coproducts. 
Then $\plusI{\catC}$ is a monoidal category, and $[-] \colon \plusI{\catC} \to \catC$ is a strict monoidal functor. 
\end{theorem}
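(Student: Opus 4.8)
The plan is to use Lemma~\ref{lem:mon_cat_helpful} to equip $\plusI{\catC}$ with a monoidal structure, so that the three pieces of data to produce are: a bifunctor $\otimes$ on $\plusI{\catC}$ together with an associator $\alpha$ satisfying the pentagon; the unit-object property for the object $\obb{0} = I \pcoprod I$ (the image of $I$); and an isomorphism $\beta$ on that unit. For the tensor, given objects $\obb{A} = A \pcoprod I$ and $\obb{B} = B \pcoprod I$ in $\plusI{\catC}$, I would define $\obb{A} \otimes \obb{B} := (A \otimes B) \pcoprod I$, using the chosen phased coproduct in $\catC$. To extend this to morphisms, given diagonal $f \colon \obb{A} \to \obb{A'}$ and $g \colon \obb{B} \to \obb{B'}$ in $\plusI{\catC}$ (so $[f] \colon A \to A'$ and $[g] \colon B \to B'$ exist by monicity), I would build $f \otimes g$ as the unique-up-to-phase morphism $(A \otimes B) \pcoprod I \to (A' \otimes B') \pcoprod I$ whose composite with $\pcoproj_{A \otimes B}$ is $\pcoproj_{A' \otimes B'} \circ ([f] \otimes [g])$ and whose composite with $\pcoproj_I$ is $\pcoproj_I$; existence of such a diagonal morphism follows from Lemma~\ref{lem:useful}, and one then checks it is a well-defined morphism of $\plusI{\catC}$ and that $[f \otimes g] = [f] \otimes [g]$, so functoriality of $\otimes$ follows from functoriality of $[-]$ and $\otimes$ on $\catC$ together with the uniqueness clause. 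Here the \emph{distributivity} hypothesis is what guarantees we have enough control over phases when manipulating $(A \otimes B) \pcoprod I$ against expressions like $A \otimes (B \pcoprod I)$.

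Next I would produce the associator. In $\catC$ we have, using distributivity and associativity of phased coproducts (Proposition~\ref{prop:assoc}), canonical isomorphisms relating $((A \otimes B) \otimes C) \pcoprod I$ and $(A \otimes (B \otimes C)) \pcoprod I$ coming from the associator of $\catC$; by Lemma~\ref{lem:isoms} any morphism commuting with the relevant coprojections is an isomorphism, and I would take $\alpha_{\obb{A},\obb{B},\obb{C}}$ to be the induced map fixing $\pcoproj_I$. Naturality and the pentagon equation then reduce, via the uniqueness-up-to-phase clause and the fact that $[-]$ is faithful enough (injective on the relevant morphisms once we quotient), to the corresponding statements in $\catC$; the monic hypothesis on coprojections is exactly what makes these diagram chases go through, since it lets us cancel coprojections. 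For the unit object: I need every morphism $\obb{A} \otimes \obb{0} \to \obb{B} \otimes \obb{0}$ in $\plusI{\catC}$ to be of the form $g \otimes \id{\obb{0}}$ for unique $g \colon \obb{A} \to \obb{B}$, and similarly on the left. Since $\obb{A} \otimes \obb{0} = (A \otimes I) \pcoprod I$ and $A \otimes I \cong A$ in $\catC$, this object is isomorphic to $\obb{A}$ in $\plusI{\catC}$; unwinding the definition of morphisms in $\plusI{\catC}$ (diagonal in $\catC$, fixing $\pcoproj_I$), the claim becomes a bookkeeping statement about diagonal morphisms, again using monicity to get uniqueness of $g$. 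Finally $\beta$ is the image in $\plusI{\catC}$ of the canonical iso $\obb{0} \otimes \obb{0} = (I \otimes I) \pcoprod I \cong I \pcoprod I = \obb{0}$, which exists and is an isomorphism by Lemma~\ref{lem:isoms}. Strictness of $[-]$ is immediate from the definitions, since $[-]$ sends $\obb{A} \otimes \obb{B} = (A \otimes B) \pcoprod I$ to $A \otimes B$ on the nose and likewise for morphisms and the associator.

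I expect the main obstacle to be the well-definedness and functoriality of $\otimes$ on morphisms, and relatedly checking the pentagon: because morphisms of $\plusI{\catC}$ are only determined up to a phase in $\catC$, every construction has to be shown independent of the choices made, and each verification is a diagram chase in which one must carefully invoke \emph{distributivity} (to transport phases through tensoring with a fixed object) and \emph{monicity of coprojections} (to cancel them and reduce equations to $\catC$). The graphical calculus should make these chases manageable: one draws the relevant morphisms with the $I$-leg explicit and tracks how phases — which by distributivity are of the controlled form $f^{-1} \circ (\id{} \otimes U) \circ f$ — propagate. None of the individual steps is deep, but organizing them so that the coherence data is genuinely well-defined is the delicate part; everything else follows formally from Lemmas~\ref{lem:isoms}, \ref{lem:mon_cat_helpful} and~\ref{lem:useful}.
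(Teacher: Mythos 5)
There is a genuine gap at the heart of your construction: your definition of the tensor on morphisms does not actually define a morphism. You specify $f\otimes g$ as ``the unique-up-to-phase morphism $(A\otimes B)\pcoprod I\to (A'\otimes B')\pcoprod I$'' determined by its composites with $\pcoproj_{A\otimes B}$ and $\pcoproj_I$. But by the very definition of a phased coproduct, \emph{every} phase $U$ of $(A'\otimes B')\pcoprod I$ fixes both coprojections, so your conditions determine $f\otimes g$ only up to an arbitrary phase, and phases are typically nontrivial morphisms of $\plusI{\catC}$ (e.g.\ in $\HilbP$, where $\plusI{\HilbP}\simeq\Hilb$, your specification of $\id{}\otimes\id{}$ is satisfied by every class $\mathrm{diag}(\id{},e^{i\theta})$). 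Morphisms of $\plusI{\catC}$ are honest diagonal morphisms of $\catC$, not equivalence classes, so no unordered ``choice for each pair $(f,g)$'' can be made functorial: both sides of $(f'\otimes g')\circ(f\otimes g)=(f'\circ f)\otimes(g'\circ g)$ are only pinned down up to phase. Monicity of the coprojections does not help here, since it cancels coprojections on the right, not on the left, and your appeal to distributivity as giving ``enough control over phases'' is not a mechanism. The same problem recurs for your associator (``the induced map fixing $\pcoproj_I$'' is again only defined up to phase), and your treatment of the unit-object condition of Lemma~\ref{lem:mon_cat_helpful} as ``bookkeeping'' skips the part of the argument that genuinely requires work with phases.

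The paper's proof resolves exactly this point by introducing, for each pair of objects, a comparison morphism $c_{\obb{A},\obb{B}}\colon \obb{A}\tens\obb{B}\to \obb{A}\otimes\obb{B}$ into the \emph{actual} tensor in $\catC$, satisfying $c_{\obb{A},\obb{B}}\circ\pcoproj_{A\otimes B}=\pcoproj_A\otimes\pcoproj_B$ and $c_{\obb{A},\obb{B}}\circ\pcoproj_I=(\pcoproj_I\otimes\pcoproj_I)\circ\rho_I^{-1}$. Distributivity (together with Proposition~\ref{prop:assoc}) exhibits $\obb{A}\otimes\obb{B}$ as a phased coproduct having $c_{\obb{A},\obb{B}}$ as a coprojection, hence monic; then $f\tens g$ is defined as the \emph{unique} morphism with $(f\otimes g)\circ c_{\obb{A},\obb{B}}=c_{\obb{C},\obb{D}}\circ(f\tens g)$, and functoriality, the definition and uniqueness of $\aalpha$, naturality and the pentagon all follow by cancelling these monics against the genuine functoriality and coherence of $\otimes$ in $\catC$. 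The unit-object property likewise needs the fold map $\obb{I}\to I$ and explicit phase manipulations (extracting phases $U,V,W$ and setting $g=f\circ V\circ U$), not just monicity. Your outline matches the paper's broad strategy (Lemma~\ref{lem:mon_cat_helpful}, tensor $(A\otimes B)\pcoprod I$, unit $I\pcoprod I$), but without the $c_{\obb{A},\obb{B}}$ device, or some equivalent rigidification, the bifunctor is not well defined and the proof does not go through.
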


\begin{proof}
We define a monoidal product $\tens$ on $\plusI{\catC}$ as follows. For each pair of objects $\obb{A}, \obb{B}$ choose some object $\obb{A} \tens \obb{B} = A \otimes B \pcoprod I$ and $c_{\obb{A},\obb{B}} \colon \obb{A} \tens \obb{B} \to \obb{A} \otimes \obb{B}$ satisfying
\begin{equation} \label{eq:corner}
c_{\obb{A},\obb{B}} \circ \pcoproj_{A \otimes B} = \pcoproj_A \otimes \pcoproj_B 
\qquad 
c_{\obb{A},\obb{B}} \circ \pcoproj_I = (\pcoproj_I \otimes \pcoproj_I) \circ \rho_{I}^{-1}
\end{equation}
which we depict as 
\[
\scalebox{0.8}{\begin{tikzpicture}
	\begin{pgfonlayer}{nodelayer}
		\node [style=none] (0) at (-1, 1) {};
		\node [style=none] (1) at (0, -0) {};
		\node [style=label] (2) at (1, 1.5) {$\obb{B}$};
		\node [style=corner2] (3) at (0, -0) {};
		\node [style=none] (4) at (1, 1) {};
		\node [style=label] (5) at (-1, 1.5) {$\obb{A}$};
		\node [style=label] (6) at (0, -1.5) {$\obb{A} \tens \obb{B}$};
		\node [style=none] (7) at (-1, 1) {};
		\node [style=none] (8) at (1, 1) {};
		\node [style=none] (9) at (0, -0) {};
		\node [style=none] (10) at (0, -1) {};
	\end{pgfonlayer}
	\begin{pgfonlayer}{edgelayer}
		\draw [style=none, bend left=45, looseness=1.25] (1.center) to (7.center);
		\draw [style=none, bend right=45, looseness=1.25] (9.center) to (8.center);
		\draw [style=none] (3) to (10.center);
	\end{pgfonlayer}
\end{tikzpicture}}

\]
Using distributivity, associativity (Proposition~\ref{prop:assoc}), and $\rho_I$, we have isomorphisms
\begin{align*}
\obb{A} \otimes \obb{B} 
&\simeq
(A \otimes B \pcoprod A \otimes I) \pcoprod (I \otimes B \pcoprod I \otimes I) \\ 
&\simeq 
(\obb{A} \tens \obb{B})\pcoprod (A \otimes I \pcoprod I \otimes B)
\end{align*}
making any such morphism $c_{\obb{A},\obb{B}}$ a coprojection, and hence monic. Then for morphisms $f \colon \obb{A} \to \obb{C}$ and $g \colon \obb{B} \to \obb{D}$ in $\plusI{\catC}$ we define $f \tens g$ to be the unique morphism in $\catC$ such that 
\[
\scalebox{0.8}{\begin{tikzpicture}
	\begin{pgfonlayer}{nodelayer}
		\node [style=none] (0) at (13, 1.75) {};
		\node [style=label] (1) at (7.25, 2.25) {$\obb{C}$};
		\node [style=box] (2) at (8, -1) {$f \tens g$};
		\node [style=label] (3) at (8, -2.75) {$\obb{A} \tens \obb{B}$};
		\node [style=none] (4) at (8.75, 1) {};
		\node [style=none] (5) at (11.5, -0.25) {};
		\node [style=none] (6) at (8, 0.25) {};
		\node [style=none] (7) at (13, -0.25) {};
		\node [style=label] (8) at (8.75, 2.25) {$\obb{D}$};
		\node [style=none] (9) at (7.25, 1.75) {};
		\node [style=none] (10) at (8.75, 1.75) {};
		\node [style=none] (11) at (8, -2.25) {};
		\node [style=box] (12) at (11.5, 0.75) {$f$};
		\node [style=box] (13) at (13, 0.75) {$g$};
		\node [style=none] (14) at (10, -1.5) {$=$};
		\node [style=label] (15) at (13, 2.25) {$\obb{D}$};
		\node [style=label] (16) at (11.5, 2.25) {$\obb{C}$};
		\node [style=none] (17) at (11.5, 1.75) {};
		\node [style=corner2] (18) at (12.25, -1) {};
		\node [style=none] (19) at (12.25, -2.25) {};
		\node [style=none] (20) at (13, -0.25) {};
		\node [style=none] (21) at (11.5, -0.25) {};
		\node [style=none] (22) at (12.25, -1) {};
		\node [style=none] (23) at (12.25, -1) {};
		\node [style=none] (24) at (8.75, 1) {};
		\node [style=none] (25) at (8, 0.25) {};
		\node [style=corner2] (26) at (8, 0.25) {};
		\node [style=none] (27) at (8, 0.25) {};
		\node [style=none] (28) at (7.25, 1) {};
		\node [style=none] (29) at (8.75, 1.25) {};
		\node [style=label] (30) at (12.25, -2.75) {$\obb{A} \tens \obb{B}$};
		\node [style=label] (31) at (13.5, -0.25) {$\obb{B}$};
		\node [style=label] (32) at (11, -0.25) {$\obb{A}$};
	\end{pgfonlayer}
	\begin{pgfonlayer}{edgelayer}
		\draw (10.center) to (4.center);
		\draw [style=none] (11.center) to (6.center);
		\draw (5.center) to (17.center);
		\draw (7.center) to (0.center);
		\draw [style=none, bend left=45, looseness=1.25] (23.center) to (21.center);
		\draw [style=none, bend right=45, looseness=1.25] (22.center) to (20.center);
		\draw [style=none] (18) to (19.center);
		\draw [style=none, bend left=45, looseness=1.25] (27.center) to (28.center);
		\draw [style=none, bend right=45, looseness=1.25] (25.center) to (24.center);
		\draw [style=none] (9.center) to (28.center);
	\end{pgfonlayer}
\end{tikzpicture}}

\]
Indeed such a map exists and belongs to $\plusI{\catC}$ by Lemma~\ref{lem:useful} since we have
\begin{align*}
(f \otimes g) \circ c_{\obb{A},\obb{B}} \circ \pcoproj_{A \otimes B} &= c_{\obb{C}, \obb{D}} \circ \pcoproj_{C \otimes D} \circ ([f] \otimes [g]) \\ 
(f \otimes g) \circ c_{\obb{A},\obb{B}} \circ \pcoproj_{I \otimes I} &= c_{\obb{C},\obb{D}} \circ \pcoproj_I
\end{align*}
Uniqueness follows from monicity of $c_{\obb{C},\obb{D}}$ and ensures that $\tens$ is functorial. We define $\aalpha_{\obb{A},\obb{B},\obb{C}} \colon (\obb{A} \tens \obb{B}) \tens \obb{C}) \to \obb{A} \tens (\obb{B} \tens \obb{C})$ to be the unique morphism such that 
\begin{equation} \label{eq:defaaalpha}
\scalebox{0.8}{\begin{tikzpicture}
	\begin{pgfonlayer}{nodelayer}
		\node [style=none] (0) at (0, 0) {$=$};
		\node [style=none] (1) at (-3, 0.25) {};
		\node [style=none] (2) at (-5, 0.25) {};
		\node [style=none] (3) at (-3, 0.25) {};
		\node [style=label] (4) at (-5, 2) {$\obb{A}$};
		\node [style=none] (5) at (-5, 1.5) {};
		\node [style=corner2] (6) at (-4, -0.75) {};
		\node [style=none] (7) at (-4, -2.25) {};
		\node [style=none] (8) at (-3, 0.25) {};
		\node [style=none] (9) at (-5, 0.25) {};
		\node [style=none] (10) at (-4, -0.75) {};
		\node [style=none] (11) at (-4, -0.75) {};
		\node [style=label] (12) at (-2, -1.25) {$\obb{A} \dtens (\obb{B} \dtens \obb{C})$};
		\node [style=label] (13) at (-2, 0.25) {$\obb{B} \dtens \obb{C}$};
		\node [style=none] (14) at (-3.75, 1.5) {};
		\node [style=none] (15) at (-3, 0.75) {};
		\node [style=label] (16) at (-2.25, 2) {$\obb{C}$};
		\node [style=corner2] (17) at (-3, 0.75) {};
		\node [style=none] (18) at (-2.25, 1.5) {};
		\node [style=label] (19) at (-3.75, 2) {$\obb{B}$};
		\node [style=none] (20) at (-3.75, 1.5) {};
		\node [style=none] (21) at (-2.25, 1.5) {};
		\node [style=none] (22) at (-3, 0.75) {};
		\node [style=none] (23) at (-3, 0.25) {};
		\node [style=none] (24) at (2.5, 1) {};
		\node [style=none] (25) at (3.25, -0.75) {};
		\node [style=none] (26) at (4, 1) {};
		\node [style=none] (27) at (5.25, 1.5) {};
		\node [style=none] (28) at (4.25, -3.25) {};
		\node [style=none] (29) at (5.25, -0.75) {};
		\node [style=none] (30) at (3.25, 0) {};
		\node [style=none] (31) at (4.25, -1.75) {};
		\node [style=none] (32) at (5.25, -0.75) {};
		\node [style=none] (33) at (3.25, 0) {};
		\node [style=none] (34) at (3.25, 0.25) {};
		\node [style=none] (35) at (4, 1) {};
		\node [style=corner2] (36) at (3.25, 0.25) {};
		\node [style=label] (37) at (2, -0.25) {$\obb{A} \dtens \obb{B}$};
		\node [style=corner2] (38) at (4.25, -1.75) {};
		\node [style=label] (39) at (5.25, 2) {$\obb{C}$};
		\node [style=label] (40) at (2.5, 2) {$\obb{A}$};
		\node [style=none] (41) at (3.25, -0.75) {};
		\node [style=label] (42) at (4.25, -3.75) {$(\obb{A} \dtens \obb{B}) \dtens \obb{C}$};
		\node [style=none] (43) at (3.25, 0.25) {};
		\node [style=none] (44) at (4.25, -1.75) {};
		\node [style=label] (45) at (4, 2) {$\obb{B}$};
		\node [style=none] (46) at (2.5, 1) {};
		\node [style=box] (47) at (-4, -2.25) {$\aalpha_{\obb{A},\obb{B},\obb{C}}$};
		\node [style=none] (48) at (-4, -3.25) {};
		\node [style=label] (49) at (-4, -3.75) {$(\obb{A} \dtens \obb{B}) \dtens \obb{C}$};
		\node [style=none] (50) at (2.5, 1.5) {};
		\node [style=none] (51) at (4, 1.5) {};
	\end{pgfonlayer}
	\begin{pgfonlayer}{edgelayer}
		\draw (2.center) to (5.center);
		\draw (3.center) to (1.center);
		\draw [style=none, bend left=45, looseness=1.25] (11.center) to (9.center);
		\draw [style=none, bend right=45, looseness=1.25] (10.center) to (8.center);
		\draw [style=none] (6) to (7.center);
		\draw [style=none, bend left=45] (15.center) to (20.center);
		\draw [style=none, bend right=45] (22.center) to (21.center);
		\draw [style=none] (17) to (23.center);
		\draw (32.center) to (27.center);
		\draw (41.center) to (30.center);
		\draw [style=none, bend right=45, looseness=1.25] (44.center) to (29.center);
		\draw [style=none, bend left=45, looseness=1.25] (31.center) to (25.center);
		\draw [style=none] (38) to (28.center);
		\draw [style=none, bend right=45, looseness=1.25] (34.center) to (35.center);
		\draw [style=none, bend left=45, looseness=1.25] (43.center) to (46.center);
		\draw [style=none] (36) to (33.center);
		\draw [style=none] (48.center) to (7.center);
		\draw [style=none] (51.center) to (26.center);
		\draw [style=none] (50.center) to (24.center);
	\end{pgfonlayer}
\end{tikzpicture}}

\end{equation}
Existence again follows from Lemma~\ref{lem:useful}. For uniqueness, distributivity tell us us that for any $\id{A} \otimes c_{\obb{B}, \obb{C}}$ is again a coprojection since $c_{\obb{B}, \obb{C}}$ is, and hence is monic. By symmetry there is some $\aalpha'_{\obb{A}, \obb{B}, \obb{C}}$ satisfying the horizontally reflected version of~\eqref{eq:defaaalpha}, and then thanks to uniqueness this is an inverse to $\aalpha_{\obb{A}, \obb{B}, \obb{C}}$. 

Again using monicity of the $\id{\obb{A}} \otimes c_{\obb{B},\obb{C}}$ we verify that $\aalpha$ is natural:
\[
\scalebox{0.8}{\input{./figures/sup5il.tikz}}

\]
and that it satisfies the pentagon equation:
\begingroup
\allowdisplaybreaks
\begin{align*}
\scalebox{0.8}{\input{./figures/sup6il-1.tikz}}
 \\ 
\scalebox{0.8}{\begin{tikzpicture}
	\begin{pgfonlayer}{nodelayer}
		\node [style=none] (175) at (2.75, 0.25) {};
		\node [style=none] (176) at (3.25, 3.25) {};
		\node [style=none] (177) at (3, -3.25) {};
		\node [style=none] (178) at (4, 2.5) {};
		\node [style=none] (179) at (2.5, 0) {};
		\node [style=none] (180) at (3, -3.25) {};
		\node [style=none] (181) at (0, 0) {$=$};
		\node [style=none] (182) at (2.75, 4) {};
		\node [style=none] (183) at (3, 0.5) {};
		\node [style=corner2] (184) at (3.25, 3.25) {};
		\node [style=none] (185) at (2.5, 1) {};
		\node [style=none] (186) at (3, 0.5) {};
		\node [style=none] (187) at (2, 0.5) {};
		\node [style=none] (188) at (3.75, 3.75) {};
		\node [style=none] (189) at (2.75, 3.75) {};
		\node [style=none] (190) at (3.25, 3.25) {};
		\node [style=none] (191) at (4.75, 4) {};
		\node [style=box] (192) at (3, -2.25) {$\aalpha_{\obb{A},\obb{B},\obb{C}} \dtens \id{\obb{D}}$};
		\node [style=none] (193) at (3.75, 4) {};
		\node [style=none] (194) at (4.75, 3.25) {};
		\node [style=corner2] (195) at (4, 2.5) {};
		\node [style=none] (196) at (3.25, 3.25) {};
		\node [style=none] (197) at (4.75, 3.25) {};
		\node [style=none] (198) at (3, 0.5) {};
		\node [style=none] (199) at (3.25, 3.25) {};
		\node [style=none] (200) at (3.25, 3.25) {};
		\node [style=corner2] (201) at (3, 0.5) {};
		\node [style=none] (202) at (4, 1.5) {};
		\node [style=none] (203) at (4, 2.5) {};
		\node [style=none] (204) at (2, 1.5) {};
		\node [style=none] (205) at (2, 4.25) {};
		\node [style=box] (206) at (3, -0.75) {$\aalpha_{\obb{A}, \obb{B} \dtens \obb{C}, \obb{D}}$};
		\node [style=none] (207) at (9.75, 2.75) {};
		\node [style=corner2] (208) at (9.75, 2.75) {};
		\node [style=none] (209) at (9, 3.5) {};
		\node [style=corner2] (210) at (8.75, 0.5) {};
		\node [style=none] (211) at (9, 3.5) {};
		\node [style=none] (212) at (8.25, 0) {};
		\node [style=none] (213) at (10.5, 3.5) {};
		\node [style=none] (214) at (7.75, 1.5) {};
		\node [style=corner2] (215) at (10.5, 3.5) {};
		\node [style=none] (216) at (8.25, 1) {};
		\node [style=none] (217) at (9.75, 1.5) {};
		\node [style=none] (218) at (7.75, 4.25) {};
		\node [style=none] (219) at (10, 4) {};
		\node [style=none] (220) at (10.5, 3.5) {};
		\node [style=none] (221) at (10.5, 3.5) {};
		\node [style=none] (222) at (10.5, 3.5) {};
		\node [style=none] (223) at (11, 4) {};
		\node [style=none] (224) at (8.75, 0.5) {};
		\node [style=none] (225) at (9, 4.25) {};
		\node [style=none] (226) at (10.5, 3.5) {};
		\node [style=none] (227) at (8.75, -3.25) {};
		\node [style=none] (228) at (7.75, 0.5) {};
		\node [style=none] (229) at (8.75, 0.5) {};
		\node [style=none] (230) at (8.75, 0.5) {};
		\node [style=none] (231) at (10, 4.25) {};
		\node [style=none] (232) at (11, 4.25) {};
		\node [style=none] (233) at (9.75, 2.75) {};
		\node [style=none] (234) at (8.75, -3.25) {};
		\node [style=none] (235) at (8.5, 0.25) {};
		\node [style=none] (236) at (5.75, 0) {$=$};
		\node [style=box] (237) at (9.75, 1.5) {$\aalpha_{\obb{B},\obb{C},\obb{D}}$};
		\node [style=none] (238) at (15.75, -3.25) {};
		\node [style=none] (239) at (15.75, 1.75) {};
		\node [style=none] (240) at (15.75, -3.25) {};
		\node [style=corner2] (241) at (15.75, 1.75) {};
		\node [style=corner2] (242) at (16.75, 2.75) {};
		\node [style=none] (243) at (15.25, 1.25) {};
		\node [style=none] (244) at (16.75, 2.75) {};
		\node [style=none] (245) at (17.5, 3.5) {};
		\node [style=none] (246) at (15.75, 1.75) {};
		\node [style=none] (247) at (16, 3.5) {};
		\node [style=none] (248) at (12.5, 0) {$=$};
		\node [style=none] (249) at (17.5, 3.5) {};
		\node [style=none] (250) at (16, 4.25) {};
		\node [style=none] (251) at (16.75, 2.75) {};
		\node [style=none] (252) at (17.5, 3.5) {};
		\node [style=none] (253) at (16.75, 2.75) {};
		\node [style=none] (254) at (15.5, 1.5) {};
		\node [style=none] (255) at (17, 4.25) {};
		\node [style=none] (256) at (15.75, 1.75) {};
		\node [style=none] (257) at (16, 3.5) {};
		\node [style=none] (258) at (17.5, 3.5) {};
		\node [style=none] (259) at (18, 4.25) {};
		\node [style=none] (260) at (14.75, 1.75) {};
		\node [style=none] (261) at (17.5, 3.5) {};
		\node [style=none] (262) at (17, 4) {};
		\node [style=none] (263) at (14.75, 2.75) {};
		\node [style=none] (264) at (15.25, 2.25) {};
		\node [style=none] (265) at (18, 4) {};
		\node [style=corner2] (266) at (17.5, 3.5) {};
		\node [style=none] (267) at (14.75, 4.25) {};
		\node [style=box] (268) at (15.75, 0.75) {$\id{\obb{A}} \dtens \aalpha_{\obb{B},\obb{C},\obb{D}}$};
		\node [style=box] (282) at (8.75, -2.25) {$\aalpha_{\obb{A},\obb{B},\obb{C}} \dtens \id{\obb{D}}$};
		\node [style=box] (283) at (8.75, -0.75) {$\aalpha_{\obb{A}, \obb{B} \dtens \obb{C}, \obb{D}}$};
		\node [style=box] (284) at (15.75, -2.25) {$\aalpha_{\obb{A},\obb{B},\obb{C}} \dtens \id{\obb{D}}$};
		\node [style=box] (285) at (15.75, -0.75) {$\aalpha_{\obb{A}, \obb{B} \dtens \obb{C}, \obb{D}}$};
		\node [style=label] (286) at (15.75, -3.75) {$((\obb{A} \dtens \obb{B}) \dtens \obb{C}) \dtens \obb{D}$};
		\node [style=label] (287) at (16, 4.75) {$\obb{B}$};
		\node [style=label] (288) at (18, 4.75) {$\obb{D}$};
		\node [style=label] (289) at (17, 4.75) {$\obb{C}$};
		\node [style=label] (290) at (14.75, 4.75) {$\obb{A}$};
		\node [style=label] (291) at (7.75, 4.75) {$\obb{A}$};
		\node [style=label] (292) at (10, 4.75) {$\obb{C}$};
		\node [style=label] (293) at (11, 4.75) {$\obb{D}$};
		\node [style=label] (294) at (9, 4.75) {$\obb{B}$};
		\node [style=label] (295) at (2, 4.75) {$\obb{A}$};
		\node [style=label] (296) at (3.75, 4.75) {$\obb{C}$};
		\node [style=label] (297) at (4.75, 4.75) {$\obb{D}$};
		\node [style=label] (298) at (2.75, 4.75) {$\obb{B}$};
		\node [style=none] (307) at (4, 2.5) {};
		\node [style=none] (308) at (2.75, 4.25) {};
		\node [style=none] (309) at (3.75, 4.25) {};
		\node [style=none] (310) at (4.75, 4.25) {};
	\end{pgfonlayer}
	\begin{pgfonlayer}{edgelayer}
		\draw (197.center) to (191.center);
		\draw [style=none, bend right=45] (178.center) to (194.center);
		\draw [style=none, bend left=45, looseness=1.25] (203.center) to (190.center);
		\draw [style=none, bend right=45] (200.center) to (188.center);
		\draw [style=none, bend left=45] (196.center) to (189.center);
		\draw [style=none] (184) to (176.center);
		\draw [style=none, bend right=45] (186.center) to (202.center);
		\draw [style=none] (177.center) to (198.center);
		\draw [style=none] (188.center) to (193.center);
		\draw [style=none] (189.center) to (182.center);
		\draw [style=none, bend left=45] (183.center) to (204.center);
		\draw [style=none] (204.center) to (205.center);
		\draw (211.center) to (225.center);
		\draw [style=none, bend left=45] (207.center) to (209.center);
		\draw [style=none, bend right=45] (233.center) to (221.center);
		\draw [style=none, bend left=45] (213.center) to (219.center);
		\draw [style=none, bend right=45] (226.center) to (223.center);
		\draw [style=none] (215) to (222.center);
		\draw [style=none, bend right=45] (230.center) to (217.center);
		\draw [style=none] (227.center) to (229.center);
		\draw [style=none] (219.center) to (231.center);
		\draw [style=none] (223.center) to (232.center);
		\draw [style=none, bend left=45] (224.center) to (214.center);
		\draw [style=none] (214.center) to (218.center);
		\draw [style=none] (217.center) to (207.center);
		\draw (257.center) to (250.center);
		\draw [style=none, bend left=45] (253.center) to (247.center);
		\draw [style=none, bend right=45] (251.center) to (249.center);
		\draw [style=none, bend left=45, looseness=1.25] (252.center) to (262.center);
		\draw [style=none, bend right=45] (245.center) to (265.center);
		\draw [style=none] (266) to (261.center);
		\draw [style=none, bend right=45] (239.center) to (244.center);
		\draw [style=none] (240.center) to (246.center);
		\draw [style=none] (262.center) to (255.center);
		\draw [style=none] (265.center) to (259.center);
		\draw [style=none, bend left=45] (256.center) to (263.center);
		\draw [style=none] (263.center) to (267.center);
		\draw [style=none] (244.center) to (253.center);
		\draw [style=none] (202.center) to (307.center);
		\draw [style=none] (310.center) to (191.center);
		\draw [style=none] (309.center) to (193.center);
		\draw [style=none] (308.center) to (182.center);
	\end{pgfonlayer}
\end{tikzpicture}}
 
\end{align*}
\endgroup
For the unit object in $\plusI{\catC}$ choose any $\obb{I} = I \pcoprod I$. Then any morphism $\beta \colon \obb{I} \tens \obb{I} \to \obb{I}$ with $\beta \circ \pcoproj_1 = \pcoproj_1 \circ \rho_I$ and $\beta \circ \pcoproj_2 = \pcoproj_2$ is an isomorphism belonging to $\plusI{\catC}$. 

We now show that in $\plusI{\catC}$ every morphism $f \colon \obb{A} \tens \obb{I} \to \obb{B} \tens \obb{I}$ is of the form $g \tens \id{\obb{I}}$ for a unique $g \colon \obb{A} \to \obb{B}$. Choose any $r_A$, $r_B$ in $\plusI{\catC}$ with $[r_A] = \rho_A$ and $[r_B] = \rho_B$ in $\catC$, setting
\[
\scalebox{0.8}{\begin{tikzpicture}
	\begin{pgfonlayer}{nodelayer}
		\node [style=none] (0) at (0, -0) {$:=$};
		\node [style=none] (1) at (-2.5, 0.5) {};
		\node [style=none] (2) at (-1.5, 0.5) {};
		\node [style=corner3] (3) at (-2, -0) {};
		\node [style=none] (4) at (-1.5, 0.5) {};
		\node [style=none] (5) at (-2.5, 0.5) {};
		\node [style=none] (6) at (-2, -0) {};
		\node [style=none] (7) at (-2, -0) {};
		\node [style=none] (8) at (-1.5, 0.5) {};
		\node [style=none] (9) at (-2, -1.25) {};
		\node [style=none] (10) at (2.25, -1.25) {};
		\node [style=none] (11) at (2.75, 1) {};
		\node [style=none] (12) at (2.75, 1) {};
		\node [style=none] (13) at (2.75, 1) {};
		\node [style=box] (14) at (2.25, -0.5) {$r_{A}$};
		\node [style=none] (15) at (2.25, 0.5) {};
		\node [style=none] (16) at (1.75, 1) {};
		\node [style=none] (17) at (2.25, 0.5) {};
		\node [style=none] (18) at (2.25, 0.5) {};
		\node [style=none] (19) at (1.75, 1) {};
		\node [style=corner2] (20) at (2.25, 0.5) {};
		\node [style=label] (21) at (-2.5, 1.75) {$\obb{A}$};
		\node [style=label] (22) at (-1.5, 1.75) {$\obb{I}$};
		\node [style=label] (23) at (-2, -1.75) {$\obb{A}$};
		\node [style=label] (24) at (1.75, 1.75) {$\obb{A}$};
		\node [style=label] (25) at (2.75, 1.75) {$\obb{I}$};
		\node [style=label] (26) at (2.25, -1.75) {$\obb{A}$};
		\node [style=none] (27) at (-2.5, 1.25) {};
		\node [style=none] (28) at (-1.5, 1.25) {};
		\node [style=none] (29) at (1.75, 1.25) {};
		\node [style=none] (30) at (2.75, 1.25) {};
		\node [style=label] (31) at (11.75, 1.75) {$\obb{B}$};
		\node [style=none] (32) at (12.25, 0.5) {};
		\node [style=none] (33) at (8, -0) {};
		\node [style=none] (34) at (7.5, 1.25) {};
		\node [style=none] (35) at (10, -0) {$:=$};
		\node [style=none] (36) at (12.75, 1) {};
		\node [style=corner2] (37) at (12.25, 0.5) {};
		\node [style=corner3] (38) at (8, -0) {};
		\node [style=box] (39) at (12.25, -0.5) {$r_{B}$};
		\node [style=none] (40) at (7.5, 0.5) {};
		\node [style=label] (41) at (12.75, 1.75) {$\obb{I}$};
		\node [style=none] (42) at (8, -1.25) {};
		\node [style=none] (43) at (8, -0) {};
		\node [style=none] (44) at (12.75, 1) {};
		\node [style=none] (45) at (8.5, 0.5) {};
		\node [style=none] (46) at (8.5, 1.25) {};
		\node [style=none] (47) at (11.75, 1) {};
		\node [style=none] (48) at (12.25, 0.5) {};
		\node [style=label] (49) at (7.5, 1.75) {$\obb{B}$};
		\node [style=none] (50) at (11.75, 1.25) {};
		\node [style=label] (51) at (8, -1.75) {$\obb{B}$};
		\node [style=none] (52) at (8.5, 0.5) {};
		\node [style=none] (53) at (12.25, -1.25) {};
		\node [style=label] (54) at (12.25, -1.75) {$\obb{B}$};
		\node [style=none] (55) at (12.25, 0.5) {};
		\node [style=none] (56) at (7.5, 0.5) {};
		\node [style=label] (57) at (8.5, 1.75) {$\obb{I}$};
		\node [style=none] (58) at (11.75, 1) {};
		\node [style=none] (59) at (12.75, 1.25) {};
		\node [style=none] (60) at (12.75, 1) {};
		\node [style=none] (61) at (8.5, 0.5) {};
	\end{pgfonlayer}
	\begin{pgfonlayer}{edgelayer}
		\draw [style=none, bend left=45, looseness=1.25] (7.center) to (5.center);
		\draw [style=none, bend right=45, looseness=1.25] (6.center) to (4.center);
		\draw [style=none, bend left=45, looseness=1.25] (17.center) to (19.center);
		\draw [style=none, bend right=45, looseness=1.25] (15.center) to (12.center);
		\draw [style=none] (10.center) to (18.center);
		\draw [style=none] (9.center) to (3);
		\draw (2.center) to (28.center);
		\draw (1.center) to (27.center);
		\draw (11.center) to (30.center);
		\draw (16.center) to (29.center);
		\draw [style=none, bend left=45, looseness=1.25] (43.center) to (56.center);
		\draw [style=none, bend right=45, looseness=1.25] (33.center) to (61.center);
		\draw [style=none, bend left=45, looseness=1.25] (55.center) to (58.center);
		\draw [style=none, bend right=45, looseness=1.25] (48.center) to (44.center);
		\draw [style=none] (53.center) to (32.center);
		\draw [style=none] (42.center) to (38);
		\draw (52.center) to (46.center);
		\draw (40.center) to (34.center);
		\draw (36.center) to (59.center);
		\draw (47.center) to (50.center);
	\end{pgfonlayer}
\end{tikzpicture}}

\]
Then the statement is equivalent to requiring that for every diagonal $f \colon \obb{A} \to \obb{B}$ there is a unique diagonal $g \colon \obb{A} \to \obb{B}$ with
\[
\scalebox{0.8}{\begin{tikzpicture}
	\begin{pgfonlayer}{nodelayer}
		\node [style=none] (0) at (0, -2) {$=$};
		\node [style=none] (1) at (-2.5, -0.75) {};
		\node [style=none] (2) at (-1.5, -0.75) {};
		\node [style=corner3] (3) at (-2, -1.25) {};
		\node [style=none] (4) at (-1.5, -0.75) {};
		\node [style=none] (5) at (-2.5, -0.75) {};
		\node [style=none] (6) at (-2, -1.25) {};
		\node [style=none] (7) at (-2, -1.25) {};
		\node [style=none] (8) at (-1.5, -0.75) {};
		\node [style=none] (9) at (-2, -3.25) {};
		\node [style=none] (10) at (2.25, -3.25) {};
		\node [style=none] (11) at (2.75, -1.75) {};
		\node [style=none] (12) at (2.75, -1.75) {};
		\node [style=none] (13) at (2.75, -1.75) {};
		\node [style=none] (14) at (2.25, -2.25) {};
		\node [style=none] (15) at (1.75, -1.75) {};
		\node [style=none] (16) at (2.25, -2.25) {};
		\node [style=none] (17) at (2.25, -2.25) {};
		\node [style=none] (18) at (1.75, -1.75) {};
		\node [style=corner3] (19) at (2.25, -2.25) {};
		\node [style=label] (20) at (-2.5, 0.5) {$\obb{B}$};
		\node [style=label] (21) at (-1.5, 0.5) {$\obb{I}$};
		\node [style=label] (22) at (-2, -3.75) {$\obb{A}$};
		\node [style=label] (23) at (1.75, 0.5) {$\obb{B}$};
		\node [style=label] (24) at (2.75, 0.5) {$\obb{I}$};
		\node [style=label] (25) at (2.25, -3.75) {$\obb{A}$};
		\node [style=none] (26) at (-2.5, -0) {};
		\node [style=none] (27) at (-1.5, -0) {};
		\node [style=none] (28) at (1.75, -1.5) {};
		\node [style=none] (29) at (2.75, -1.5) {};
		\node [style=box] (30) at (-2, -2.25) {$f$};
		\node [style=none] (31) at (1.75, -0) {};
		\node [style=none] (32) at (2.75, -0) {};
		\node [style=box] (33) at (1.75, -1) {$g$};
	\end{pgfonlayer}
	\begin{pgfonlayer}{edgelayer}
		\draw [style=none, bend left=45, looseness=1.25] (7.center) to (5.center);
		\draw [style=none, bend right=45, looseness=1.25] (6.center) to (4.center);
		\draw [style=none, bend left=45, looseness=1.25] (16.center) to (18.center);
		\draw [style=none, bend right=45, looseness=1.25] (14.center) to (12.center);
		\draw [style=none] (10.center) to (17.center);
		\draw [style=none] (9.center) to (3);
		\draw (2.center) to (27.center);
		\draw (1.center) to (26.center);
		\draw (11.center) to (29.center);
		\draw (15.center) to (28.center);
		\draw (15.center) to (31.center);
		\draw (29.center) to (32.center);
	\end{pgfonlayer}
\end{tikzpicture}}

\]
Now let $\tinycounit \colon \obb{I} \to I$ in $\catC$ with $\tinycounit \circ \pcoproj_1 = \id{I} = \tinycounit \circ \pcoproj_2$. Applying coprojections we have 
\[
\scalebox{0.8}{\begin{tikzpicture}
	\begin{pgfonlayer}{nodelayer}
		\node [style=none] (0) at (0.25, -0) {$=$};
		\node [style=none] (1) at (-2.25, 0.75) {};
		\node [style=none] (2) at (-1.25, 0.75) {};
		\node [style=corner3] (3) at (-1.75, 0.25) {};
		\node [style=none] (4) at (-1.25, 0.75) {};
		\node [style=none] (5) at (-2.25, 0.75) {};
		\node [style=none] (6) at (-1.75, 0.25) {};
		\node [style=none] (7) at (-1.75, 0.25) {};
		\node [style=none] (8) at (-1.25, 0.75) {};
		\node [style=none] (9) at (-1.75, -1.5) {};
		\node [style=none] (10) at (-2.25, 1.25) {};
		\node [style=none] (11) at (-1.25, 1.25) {};
		\node [style=corner3] (12) at (-1.25, 1.25) {};
		\node [style=none] (13) at (1.75, 1.25) {};
		\node [style=none] (14) at (1.75, -1.5) {};
		\node [style=corner1] (15) at (-1.75, -0.75) {$U$};
	\end{pgfonlayer}
	\begin{pgfonlayer}{edgelayer}
		\draw [style=none, bend left=45, looseness=1.25] (7.center) to (5.center);
		\draw [style=none, bend right=45, looseness=1.25] (6.center) to (4.center);
		\draw [style=none] (9.center) to (3);
		\draw (2.center) to (11.center);
		\draw (1.center) to (10.center);
		\draw (14.center) to (13.center);
	\end{pgfonlayer}
\end{tikzpicture}}
 
\]
for some phase $U$, which is in particular invertible. This makes $g$ unique. We now show that $g$ exists. 
Applying coprojections again one may see that
\[
\scalebox{0.8}{\begin{tikzpicture}
	\begin{pgfonlayer}{nodelayer}
		\node [style=none] (0) at (1, 0.5) {};
		\node [style=none] (1) at (1.75, -0.25) {};
		\node [style=grey dot] (2) at (1.75, -0.25) {};
		\node [style=none] (3) at (2.5, 0.5) {};
		\node [style=none] (4) at (2.5, 0.5) {};
		\node [style=none] (5) at (2.5, 0.5) {};
		\node [style=none] (6) at (1, 0.5) {};
		\node [style=none] (7) at (1.75, -0.25) {};
		\node [style=none] (8) at (1.75, -2) {};
		\node [style=none] (9) at (1, 0.5) {};
		\node [style=corner1] (10) at (1.75, -1.25) {$V$};
		\node [style=none] (11) at (1, 3) {};
		\node [style=none] (12) at (-1.5, -2) {};
		\node [style=none] (13) at (-1.5, 1) {};
		\node [style=box] (14) at (-1.5, -0.5) {$f$};
		\node [style=none] (15) at (0, -0) {$=$};
		\node [style=none] (16) at (-0.75, 1.75) {};
		\node [style=none] (17) at (-0.75, 1.75) {};
		\node [style=none] (18) at (-2.25, 3) {};
		\node [style=none] (19) at (-0.75, 1.75) {};
		\node [style=none] (20) at (-0.75, 1.75) {};
		\node [style=none] (21) at (-1.5, 1) {};
		\node [style=none] (22) at (-2.25, 1.75) {};
		\node [style=none] (23) at (-0.75, 3) {};
		\node [style=none] (24) at (-1.5, 1) {};
		\node [style=none] (25) at (-2.25, 1.75) {};
		\node [style=none] (26) at (-2.25, 1.75) {};
		\node [style=corner3] (27) at (-1.5, 1) {};
		\node [style=box] (28) at (1, 2.25) {$f$};
		\node [style=none] (29) at (2.5, 3) {};
		\node [style=none] (30) at (7.5, -0) {};
		\node [style=none] (31) at (10.25, -0) {};
		\node [style=none] (32) at (7.5, -0) {};
		\node [style=none] (33) at (10.25, -1.75) {};
		\node [style=none] (34) at (11, 2.25) {};
		\node [style=grey dot] (35) at (10.25, -0) {};
		\node [style=none] (36) at (6, -0) {};
		\node [style=none] (37) at (9.5, 0.75) {};
		\node [style=none] (38) at (6, -0) {};
		\node [style=none] (39) at (6.75, -0.75) {};
		\node [style=none] (40) at (7.5, -0) {};
		\node [style=none] (41) at (11, 0.75) {};
		\node [style=none] (42) at (7.5, 2.5) {};
		\node [style=grey dot] (43) at (6.75, -0.75) {};
		\node [style=none] (44) at (6.75, -0.75) {};
		\node [style=none] (45) at (9.5, 0.75) {};
		\node [style=none] (46) at (9.5, 0.75) {};
		\node [style=none] (47) at (11, 0.75) {};
		\node [style=none] (48) at (9.5, 2.25) {};
		\node [style=none] (49) at (10.25, -0) {};
		\node [style=none] (50) at (8.5, -0) {$=$};
		\node [style=none] (51) at (11, 0.75) {};
		\node [style=corner1] (52) at (10.25, -1) {$W$};
		\node [style=corner1] (53) at (6, 1.5) {$V$};
		\node [style=none] (54) at (11, 0.75) {};
		\node [style=none] (55) at (6.75, -1.75) {};
		\node [style=none] (56) at (6, 2.5) {};
		\node [style=corner1] (57) at (1, 1) {$U$};
		\node [style=corner1] (58) at (6, 0.25) {$U$};
		\node [style=corner1] (59) at (9.5, 1.5) {$U$};
	\end{pgfonlayer}
	\begin{pgfonlayer}{edgelayer}
		\draw [style=none, bend left=45, looseness=1.25] (1.center) to (6.center);
		\draw [style=none, bend right=45, looseness=1.25] (7.center) to (5.center);
		\draw [style=none] (8.center) to (2);
		\draw (0.center) to (9.center);
		\draw (0.center) to (11.center);
		\draw (13.center) to (12.center);
		\draw [style=none, bend left=45, looseness=1.25] (24.center) to (25.center);
		\draw [style=none, bend right=45, looseness=1.25] (21.center) to (17.center);
		\draw (26.center) to (22.center);
		\draw (16.center) to (19.center);
		\draw (16.center) to (23.center);
		\draw (26.center) to (18.center);
		\draw (3.center) to (29.center);
		\draw [style=none, bend left=45, looseness=1.25] (44.center) to (38.center);
		\draw [style=none, bend right=45, looseness=1.25] (39.center) to (40.center);
		\draw [style=none] (55.center) to (43);
		\draw (36.center) to (56.center);
		\draw (32.center) to (42.center);
		\draw [style=none, bend left=45, looseness=1.25] (31.center) to (37.center);
		\draw [style=none, bend right=45, looseness=1.25] (49.center) to (54.center);
		\draw [style=none] (33.center) to (35);
		\draw (46.center) to (45.center);
		\draw (41.center) to (47.center);
		\draw (41.center) to (34.center);
		\draw (46.center) to (48.center);
	\end{pgfonlayer}
\end{tikzpicture}}

\]
for some phases $V$ and $W$. But then
\[
\scalebox{0.8}{\begin{tikzpicture}
	\begin{pgfonlayer}{nodelayer}
		\node [style=none] (0) at (0, -0) {$=$};
		\node [style=none] (1) at (-2.75, -0.5) {};
		\node [style=none] (2) at (-1.25, -0.5) {};
		\node [style=grey dot] (3) at (-2, -1.25) {};
		\node [style=none] (4) at (-1.25, -0.5) {};
		\node [style=none] (5) at (-2.75, -0.5) {};
		\node [style=none] (6) at (-2, -1.25) {};
		\node [style=none] (7) at (-2, -1.25) {};
		\node [style=none] (8) at (-1.25, -0.5) {};
		\node [style=none] (9) at (-2, -2) {};
		\node [style=none] (10) at (-2.75, 2) {};
		\node [style=grey dot] (11) at (-1.25, -0) {};
		\node [style=corner1] (12) at (-2.75, 1.25) {$V$};
		\node [style=none] (13) at (0.75, 0.5) {};
		\node [style=none] (14) at (1.5, -0.25) {};
		\node [style=grey dot] (15) at (1.5, -0.25) {};
		\node [style=none] (16) at (2.25, 0.75) {};
		\node [style=none] (17) at (2.25, 0.5) {};
		\node [style=none] (18) at (2.25, 0.5) {};
		\node [style=none] (19) at (2.25, 0.5) {};
		\node [style=none] (20) at (0.75, 0.5) {};
		\node [style=none] (21) at (1.5, -0.25) {};
		\node [style=none] (22) at (1.5, -2) {};
		\node [style=none] (23) at (0.75, 0.5) {};
		\node [style=corner1] (24) at (1.5, -1.25) {$W$};
		\node [style=none] (25) at (0.75, 2) {};
		\node [style=grey dot] (26) at (2.25, 0.75) {};
		\node [style=none] (27) at (-5, -2) {};
		\node [style=none] (28) at (-5, 2) {};
		\node [style=box] (29) at (-5, -0) {$V$};
		\node [style=none] (30) at (-4, -0) {$=$};
		\node [style=none] (31) at (4.25, -2) {};
		\node [style=none] (32) at (3, -0) {$=$};
		\node [style=none] (33) at (4.25, 2) {};
		\node [style=box] (34) at (4.25, -0) {$W$};
		\node [style=box] (35) at (-2.75, -0) {$U$};
		\node [style=box] (36) at (0.75, 1) {$U$};
	\end{pgfonlayer}
	\begin{pgfonlayer}{edgelayer}
		\draw [style=none, bend left=45, looseness=1.25] (7.center) to (5.center);
		\draw [style=none, bend right=45, looseness=1.25] (6.center) to (4.center);
		\draw [style=none] (9.center) to (3);
		\draw (1.center) to (10.center);
		\draw (2.center) to (11.center);
		\draw [style=none, bend left=45, looseness=1.25] (14.center) to (20.center);
		\draw [style=none, bend right=45, looseness=1.25] (21.center) to (19.center);
		\draw [style=none] (22.center) to (15);
		\draw (13.center) to (23.center);
		\draw (16.center) to (18.center);
		\draw (16.center) to (26.center);
		\draw (13.center) to (25.center);
		\draw (28.center) to (27.center);
		\draw (33.center) to (31.center);
	\end{pgfonlayer}
\end{tikzpicture}}

\]
yielding the result with $g = f \circ V \circ U$. The statement about morphisms $\obb{I} \tens \obb{A} \to \obb{I} \tens \obb{B}$ follows similarly. Hence by Lemma~\ref{lem:mon_cat_helpful}, $(\tens,\aalpha, \obb{I}, \beta)$ extends to a monoidal structure on $\plusI{\catC}$. Finally from their definitions we quickly see that $[f \tens g]=[f] \otimes [g]$, $[\aalpha]= \alpha$, and $[\beta]=\rho_I$, and hence by~\cite[Proposition~3.5]{kock2008elementary} the functor $[-]$ is stict monoidal.
\end{proof}

\begin{lemma} 
In the situation of Theorem~\ref{thm:constr_is_monoidal}, if $\catC$ is braided or symmetric monoidal then so are $\plusI{\catC}$ and the functor $[-]$.
\end{lemma}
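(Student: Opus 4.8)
The plan is to transport the braiding of $\catC$ through the comparison maps $c_{\obb{A},\obb{B}}$ used to build $\tens$ in the proof of Theorem~\ref{thm:constr_is_monoidal}. For objects $\obb{A} = A \pcoprod I$ and $\obb{B} = B \pcoprod I$, I would define $\ssigma_{\obb{A},\obb{B}} \colon \obb{A} \tens \obb{B} \to \obb{B} \tens \obb{A}$ to be the unique morphism in $\catC$ satisfying
\[
c_{\obb{B},\obb{A}} \circ \ssigma_{\obb{A},\obb{B}} = \sigma_{\obb{A},\obb{B}} \circ c_{\obb{A},\obb{B}}.
\]
Existence of such a morphism, together with the fact that it lies in $\plusI{\catC}$ (i.e.\ is diagonal and preserves $\pcoproj_I$), will follow from Lemma~\ref{lem:useful}: precomposing the right-hand side with $\pcoproj_{A \otimes B}$ gives $\sigma_{\obb{A},\obb{B}} \circ (\pcoproj_A \otimes \pcoproj_B) = (\pcoproj_B \otimes \pcoproj_A) \circ \sigma_{A,B} = c_{\obb{B},\obb{A}} \circ \pcoproj_{B \otimes A} \circ \sigma_{A,B}$ by naturality of $\sigma$, while precomposing with $\pcoproj_I$ and using $\sigma_{I,I} = \id{I \otimes I}$ (a standard coherence fact, since $\rho_I \circ \sigma_{I,I} = \lambda_I = \rho_I$) gives $c_{\obb{B},\obb{A}} \circ \pcoproj_I$. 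Uniqueness is immediate since $c_{\obb{B},\obb{A}}$ is monic, as shown in the proof of Theorem~\ref{thm:constr_is_monoidal}.

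Naturality of $\ssigma$ will then follow by composing the relevant square with the monic $c_{\obb{D},\obb{C}}$ and expanding via the defining equations of $f \tens g$ and $\ssigma$ to reduce to naturality of $\sigma$ in $\catC$. The same monic-cancellation trick handles the symmetry axiom $\ssigma_{\obb{B},\obb{A}} \circ \ssigma_{\obb{A},\obb{B}} = \id{\obb{A} \tens \obb{B}}$ when $\catC$ is symmetric, reducing it to $\sigma_{\obb{B},\obb{A}} \circ \sigma_{\obb{A},\obb{B}} = \id{}$. To see that $[-]$ is braided (resp.\ symmetric) monoidal it suffices, since $[-]$ is already strict monoidal by Theorem~\ref{thm:constr_is_monoidal}, to check $[\ssigma_{\obb{A},\obb{B}}] = \sigma_{A,B}$; composing the defining equation of $\ssigma_{\obb{A},\obb{B}}$ with the coprojection $\pcoproj_{A \otimes B}$ yields $(\pcoproj_B \otimes \pcoproj_A) \circ [\ssigma_{\obb{A},\obb{B}}] = (\pcoproj_B \otimes \pcoproj_A) \circ \sigma_{A,B}$, and $\pcoproj_B \otimes \pcoproj_A$ is monic (being the composite of $\pcoproj_B \otimes \id{A}$ and $\id{\obb{B}} \otimes \pcoproj_A$, each monic by distributivity), so cancellation gives the claim.

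The main obstacle is verifying the two hexagon identities for $(\tens, \aalpha, \ssigma)$. As in the proof of Theorem~\ref{thm:constr_is_monoidal}, the strategy is to precompose both sides of each hexagon with a morphism out of the codomain that is monic — built from the $c_{-,-}$ together with maps of the form $\id{\obb{A}} \otimes c_{\obb{B},\obb{C}}$, which are coprojections, and hence monic, by distributivity and associativity (Proposition~\ref{prop:assoc}) — then expand everything using the defining equations of $\tens$, $\aalpha$, and $\ssigma$ until the identity reduces to one in $\catC$ that follows from its hexagon axioms, naturality, and coherence, and finally cancel the monic. These computations are lengthy but mechanical, and are most transparent in the graphical calculus; the only real care needed is in recognising the right monic to cancel at each stage. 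The symmetric case then follows at once, since the braiding on $\plusI{\catC}$ so constructed is self-inverse whenever that of $\catC$ is.
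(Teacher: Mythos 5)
Your proposal is correct and takes essentially the same route as the paper: it defines $\ssigma_{\obb{A},\obb{B}}$ by the same transport equation $c_{\obb{B},\obb{A}} \circ \ssigma_{\obb{A},\obb{B}} = \sigma_{\obb{A},\obb{B}} \circ c_{\obb{A},\obb{B}}$, gets existence from Lemma~\ref{lem:useful}, and settles uniqueness, naturality, the hexagons and compatibility with $[-]$ by cancelling the monic comparison maps, exactly as in the paper's proof. The only detail you leave implicit is that $\ssigma_{\obb{A},\obb{B}}$ is invertible in the merely braided case; this follows at once by the same trick (define a candidate inverse from $\sigma_{\obb{A},\obb{B}}^{-1}$ and cancel the monic $c$'s), or by noting that $[-]$ reflects isomorphisms and $[\ssigma_{\obb{A},\obb{B}}] = \sigma_{A,B}$.
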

\begin{proof}
Define $\ssigma_{\obb{A}, \obb{B}} \colon \obb{A} \tens \obb{B} \to \obb{B} \tens \obb{A}$ to be the unique map such that 
\[
\sigma_{\obb{A}, \obb{B}} \circ c_{\obb{A},\obb{B}} = c_{\obb{B}, \obb{A}} \circ \ssigma_{\obb{A}, \obb{B}}
\]
again establishing existence with Lemma~\ref{lem:useful}. Since $\sigma_{\obb{A}, \obb{B}}$ is an isomorphism (with inverse $\sigma_{\obb{B}, \obb{A}}$ in the symmetric case), uniqueness forces $\ssigma_{\obb{A}, \obb{B}}$ to be the same. Naturality of $\ssigma$ is easily verified using monicity of the $c_{\obb{A}, \obb{B}}$ and the definition of $\tens$.
We now check the first hexagon equation, with the second being shown dually. 
\[
\scalebox{0.8}{\input{./figures/sup29l.tikz}}

\]
\end{proof}

To show next that $\plusI{\catC}$ has coproducts, we use the following. 

\begin{lemma} \label{lem:deleter_cancellation_modified}
Let $\catC$ be monoidal with distributive finite phased coproducts. Then $I$ is a phase generator. 
\end{lemma}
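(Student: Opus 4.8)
The plan is to verify the two conditions in Definition~\ref{def:phase-gen} directly, exploiting the fact that in a monoidal category the object $I$ controls all morphisms of the form $A\otimes I\to B\otimes I$ via Lemma~\ref{lem:mon_cat_helpful}, together with distributivity, which strongly preserves phased coproducts under $A\otimes(-)$ and $(-)\otimes A$. The key preliminary observation is that $I\pcoprod I$ is, up to isomorphism, $I\otimes(I\pcoprod I)$ (via $\rho$ and distributivity), so that a phase $U$ on $I\pcoprod I$ is of the form $\mathrm{corner}^{-1}\circ(\id{I}\otimes U')\circ\mathrm{corner}$ for a phase $U'$ on $I\pcoprod I$ — in other words phases on $I\pcoprod I$ arise as $\id{I}\otimes(-)$ applied to phases, and hence are determined by their effect on a single tensor factor. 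More usefully, since $I$ is the monoidal unit, every endomorphism of $I\pcoprod I$ obtained in this way is a scalar acting on $I\pcoprod I$; this is the slogan ``phases on $I\pcoprod I$ are central scalars''.

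First I would treat phase monicity of $\triangledown\colon I\pcoprod I\to I$ with $\triangledown\circ\pcoproj_1=\id{I}=\triangledown\circ\pcoproj_2$. Given phases $U,V$ on $I\pcoprod I$ with $\triangledown\circ U=\triangledown\circ V$, I want $U=V$. Using distributivity I identify $I\pcoprod I$ with $I\otimes(I\pcoprod I)$, under which $\triangledown$ becomes (up to the coherence isomorphism $\rho$) the map $\id{I}\otimes\triangledown'$ for the analogous $\triangledown'$, and $U=\id{I}\otimes U'$, $V=\id{I}\otimes V'$. Then $\triangledown\circ U=\triangledown\circ V$ becomes $\id{I}\otimes(\triangledown'\circ U')=\id{I}\otimes(\triangledown'\circ V')$, and by the cancellation property of $I$ from Lemma~\ref{lem:mon_cat_helpful} (every morphism $I\otimes A\to I\otimes B$ is $\id{I}\otimes g$ for a \emph{unique} $g$) we get $\triangledown'\circ U'=\triangledown'\circ V'$. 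But this is a genuine coproduct-style cancellation once we compose further with coprojections — precomposing with $\pcoproj_1$ and $\pcoproj_2$ and using that $U',V'$ preserve them gives $U'$ and $V'$ equal after composing with each coprojection, whence $\id{I}\otimes U'=\id{I}\otimes V'$ again by uniqueness, i.e. $U=V$. I expect some care is needed threading the coherence isomorphisms $\rho_I$ and the distributivity isomorphism, but this is bookkeeping.

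Next, phase epicity of a diagonal monomorphism $m\colon I\pcoprod I\to A\pcoprod B$: given phases $U,V$ on $I\pcoprod I$ with $U\circ m=V\circ m$ — wait, the condition is $m\circ U=m\circ V$ is not it; the statement is that $m$ is phase epic, meaning $U\circ m=V\circ m\implies U=V$ for phases on the \emph{codomain}. Re-reading Definition~\ref{def:phase-gen}: ``any diagonal monomorphism $m\colon I\pcoprod I\to A\pcoprod B$ is phase epic'', where phase epic for $g\colon C\to X\pcoprod Y$ means $U\circ g=V\circ g\implies U=V$ for phases $U,V$ of $X\pcoprod Y$. So here $m$ is monic, and I must show that phases on $A\pcoprod B$ agreeing after precomposition with $m$ are equal. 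Since $m$ is diagonal, $m\circ\pcoproj_1=\pcoproj_A\circ a$ and $m\circ\pcoproj_2=\pcoproj_B\circ b$; the hypothesis $U\circ m=V\circ m$ composed with $\pcoproj_1,\pcoproj_2$ and using that $U,V$ preserve $\pcoproj_A,\pcoproj_B$ gives $\pcoproj_A\circ a=\pcoproj_A\circ a$ trivially, so that route alone does not suffice — instead I use that $m$ is monic directly: $U\circ m=V\circ m$ with $m$ monic does not give $U=V$ (wrong side). The correct approach: since $m$ is diagonal, distributivity lets me write $m$, up to the identification $I\pcoprod I\cong I\otimes(I\pcoprod I)$, as $\id{I}\otimes m'$ composed with the distributor into $A\pcoprod B\cong$ (a phased coproduct), and phases $U,V$ pull back through the distributor to $\id{A}\otimes U_0$-type maps on a tensor factor; then $U\circ m=V\circ m$ plus monicity of $m$ — now on the correct side since we are cancelling a mono on the right after rewriting — yields equality. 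The cleanest version: $m$ monic and $U\circ m=V\circ m$ gives nothing, but $(U\circ m=V\circ m)$ can be rephrased, using that $U=V\circ W$ for the phase $W=V^{-1}U$, as $V\circ W\circ m=V\circ m$, hence $W\circ m=m$ since $V$ is iso; then $W$ is a phase with $W\circ m=m$, and since $m$ is a diagonal mono and $I$ is being shown a phase generator, I instead argue $W=\id{}$ by transporting along distributivity to reduce to uniqueness in Lemma~\ref{lem:mon_cat_helpful}: $W$ becomes $\id{A}\otimes W_0$ (roughly) and $W\circ m=m$ forces $W_0$ to be an identity on the relevant tensor factor by the uniqueness clause, hence $W=\id{}$ and $U=V$.

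The main obstacle I anticipate is not conceptual but organisational: correctly setting up the chain of isomorphisms $A\pcoprod B\cong I\otimes A\pcoprod I\otimes B\cong I\otimes(A\pcoprod B)$ and its compatibility with the coprojections and with the distributor, and checking that phases transport along these isomorphisms to maps of the form $\id{I}\otimes(-)$ so that the uniqueness clause of Lemma~\ref{lem:mon_cat_helpful} applies. Once that dictionary is in place — ``phases on any $X$ that is a tensor $I\otimes X'$ are $\id{I}\otimes(\text{phase})$, and such maps are uniquely determined'' — both conditions of Definition~\ref{def:phase-gen} reduce to the cancellation property built into the characterisation of the monoidal unit. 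The graphical calculus should make the bookkeeping manageable, and I would likely present the argument diagrammatically, sliding phases (which behave like central scalars on $I\pcoprod I$) past coprojections and cancelling the unit wires.
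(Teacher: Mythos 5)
There is a genuine gap, and it occurs at the decisive step of each half. For phase monicity your reduction via $I\pcoprod I\cong I\otimes(I\pcoprod I)$ gains nothing: by naturality of the unitor, transporting a phase $U$ across this identification simply yields $\id{I}\otimes U$, so the equation $\triangledown\circ U=\triangledown\circ V$ unwinds to exactly the equation you started with. The argument then collapses at the sentence ``precomposing with $\pcoproj_1$ and $\pcoproj_2$ and using that $U',V'$ preserve them gives $U'$ and $V'$ equal after composing with each coprojection, whence $U'=V'$''. Agreement after composing with all coprojections does \emph{not} imply equality in a phased coproduct --- it implies equality only up to a phase, and indeed every pair of phases agrees on the coprojections by definition. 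If your inference were valid, every phase of $I\pcoprod I$ would be the identity, i.e.\ $I\pcoprod I$ would be an honest coproduct, which fails already in $\HilbP$. So the first half is circular: it assumes precisely the triviality of phases it is meant to establish. The unit's cancellation property (every map $I\otimes A\to I\otimes B$ is uniquely $\id{I}\otimes g$) cannot by itself detect phases, because conjugation by unitors is natural and hence phase-preserving tautologically.

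The second half inherits the same problem in vaguer form: ``transporting along distributivity, $W$ becomes $\id{A}\otimes W_0$ and $W\circ m=m$ forces $W_0=\id{}$ by the uniqueness clause'' is not an argument --- $A\pcoprod B$ is not a tensor $A\otimes X$, and the only available identification, $A\pcoprod B\cong I\otimes(A\pcoprod B)$, again just rewrites $W$ as $\id{I}\otimes W$, after which the uniqueness clause returns $W$ itself, not $\id{}$. The monicity of $m$ is never actually brought to bear. What is missing in both halves is a genuinely non-tautological use of the tensor: the paper's proof introduces a copying map $\obb{I}\to\obb{I}\otimes\obb{I}$ sending $\pcoproj_i\mapsto(\pcoproj_i\otimes\pcoproj_i)\circ\rho_I^{-1}$, and (for the second condition) a tagging map $A\pcoprod B\to(A\pcoprod B)\otimes\obb{I}$ sending the $A$-leg through $\pcoproj_1$ and the $B$-leg through $\pcoproj_2$. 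Distributivity then splits phases of these tensor products as tensors of phases, and comparing the two resulting decompositions --- using the counit $\tinycounit$ and, in the second part, monicity of $m$ and the already-proved first part --- forces the phase to be the identity. Some device of this kind (doubling the object so that a single phase acquires two independent tensor-factor decompositions) is essential; without it your plan reduces to bookkeeping with unitors and cannot close.
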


\begin{proof}
Let $\obb{I} = I \pcoprod I$, $\tinycounit \colon \obb{I} \to I$ with $\tinycounit \circ \pcoproj_1 = \tinycounit \circ \pcoproj_2 = \id{I}$ and $U$ be a phase on $\obb{I}$ with $\tinycounit \circ U = \tinycounit$. We need to show that $U = \id{\obb{I}}$. Let $\tinymultflip \colon \obb{I} \to \obb{I} \otimes \obb{I}$ with $\tinymultflip \circ \pcoproj_i =  (\pcoproj_i \otimes \pcoproj_i) \circ \rho_I^{-1}$ for $i = 1, 2$. Applying the $\pcoproj_i$ we see that there are phases $Q, V$ and $W$ with 
\[
\scalebox{0.8}{\begin{tikzpicture}
	\begin{pgfonlayer}{nodelayer}
		\node [style=none] (0) at (1.5, 0.5) {};
		\node [style=none] (1) at (5.25, -1) {};
		\node [style=none] (2) at (4, -0) {$=$};
		\node [style=corner3] (3) at (1.5, 0.5) {};
		\node [style=none] (4) at (5.25, 1.25) {};
		\node [style=corner1] (5) at (5.25, 0.25) {$V$};
		\node [style=none] (6) at (3, 0.5) {};
		\node [style=none] (7) at (2.25, -1) {};
		\node [style=none] (8) at (3, 0.5) {};
		\node [style=none] (9) at (1.5, 0.5) {};
		\node [style=none] (10) at (2.25, -0.25) {};
		\node [style=none] (11) at (2.25, -0.25) {};
		\node [style=none] (12) at (1.5, 0.5) {};
		\node [style=none] (13) at (3, 0.5) {};
		\node [style=corner3] (14) at (2.25, -0.25) {};
		\node [style=none] (15) at (1.5, 0.5) {};
		\node [style=none] (16) at (-0.75, -1) {};
		\node [style=none] (17) at (-3, 0.5) {};
		\node [style=none] (18) at (-0.75, 1.25) {};
		\node [style=corner3] (19) at (-3.75, -0.25) {};
		\node [style=none] (20) at (-3.75, -0.25) {};
		\node [style=corner3] (21) at (-3, 0.5) {};
		\node [style=none] (22) at (-3, 0.5) {};
		\node [style=none] (23) at (-4.5, 0.5) {};
		\node [style=none] (24) at (-3.75, -0.25) {};
		\node [style=none] (25) at (-4.5, 0.5) {};
		\node [style=none] (26) at (-2, -0) {$=$};
		\node [style=none] (27) at (-3, 0.5) {};
		\node [style=none] (28) at (-4.5, 0.5) {};
		\node [style=none] (29) at (-3.75, -1) {};
		\node [style=none] (30) at (-3, 0.5) {};
		\node [style=corner1] (31) at (-0.75, 0.25) {$Q$};
		\node [style=none] (32) at (9.5, 1.5) {};
		\node [style=none] (33) at (8, 0.5) {};
		\node [style=none] (34) at (8.75, -0.25) {};
		\node [style=corner1] (35) at (12, -0.25) {$W$};
		\node [style=none] (36) at (8, 0.5) {};
		\node [style=none] (37) at (10.5, -0) {$=$};
		\node [style=none] (38) at (8.75, -0.25) {};
		\node [style=none] (39) at (8, 0.5) {};
		\node [style=none] (40) at (12, -1) {};
		\node [style=corner3] (41) at (8.75, -0.25) {};
		\node [style=none] (42) at (12, 0.75) {};
		\node [style=none] (43) at (8.75, -1) {};
		\node [style=none] (44) at (9.5, 0.5) {};
		\node [style=none] (45) at (8, 1.5) {};
		\node [style=corner1] (46) at (8, 0.75) {$U$};
		\node [style=none] (47) at (12.75, 1.5) {};
		\node [style=none] (48) at (12, 0.75) {};
		\node [style=none] (49) at (11.25, 1.5) {};
		\node [style=corner3] (50) at (12, 0.75) {};
		\node [style=none] (51) at (11.25, 1.5) {};
		\node [style=none] (52) at (11.25, 1.5) {};
		\node [style=none] (53) at (12, 0.75) {};
		\node [style=none] (54) at (3, 1) {};
		\node [style=none] (55) at (-4.5, 1) {};
	\end{pgfonlayer}
	\begin{pgfonlayer}{edgelayer}
		\draw [style=none] (4.center) to (1.center);
		\draw [style=none, bend left=45, looseness=1.25] (10.center) to (9.center);
		\draw [style=none, bend right=45, looseness=1.25] (11.center) to (8.center);
		\draw [style=none] (7.center) to (14);
		\draw (12.center) to (15.center);
		\draw [style=none] (18.center) to (16.center);
		\draw [style=none, bend right=45, looseness=1.25] (24.center) to (30.center);
		\draw [style=none, bend left=45, looseness=1.25] (20.center) to (25.center);
		\draw [style=none] (29.center) to (19);
		\draw (22.center) to (17.center);
		\draw [style=none] (42.center) to (40.center);
		\draw [style=none, bend left=45, looseness=1.25] (38.center) to (36.center);
		\draw [style=none] (43.center) to (41);
		\draw [style=none, bend right=45, looseness=1.25] (34.center) to (44.center);
		\draw [style=none] (33.center) to (45.center);
		\draw [style=none] (44.center) to (32.center);
		\draw [style=none, bend left=45, looseness=1.25] (48.center) to (49.center);
		\draw [style=none, bend right=45, looseness=1.25] (53.center) to (47.center);
		\draw [style=none] (23.center) to (55.center);
		\draw [style=none] (6.center) to (54.center);
	\end{pgfonlayer}
\end{tikzpicture}}

\]
But then
\[
\scalebox{0.8}{\begin{tikzpicture}
	\begin{pgfonlayer}{nodelayer}
		\node [style=none] (0) at (-3, 0.5) {};
		\node [style=none] (1) at (-5, -1.25) {};
		\node [style=none] (2) at (-3.75, -0) {$=$};
		\node [style=corner3] (3) at (-3, 0.5) {};
		\node [style=none] (4) at (-5, 1.25) {};
		\node [style=corner1] (5) at (-5, -0) {$V$};
		\node [style=none] (6) at (-1.5, 0.5) {};
		\node [style=none] (7) at (-2.25, -1.25) {};
		\node [style=none] (8) at (-1.5, 0.5) {};
		\node [style=none] (9) at (-3, 0.5) {};
		\node [style=none] (10) at (-2.25, -0.25) {};
		\node [style=none] (11) at (-2.25, -0.25) {};
		\node [style=none] (12) at (-3, 0.5) {};
		\node [style=none] (13) at (-1.5, 0.5) {};
		\node [style=corner3] (14) at (-2.25, -0.25) {};
		\node [style=none] (15) at (-3, 0.5) {};
		\node [style=none] (16) at (8.75, -1) {};
		\node [style=none] (17) at (8.75, 2) {};
		\node [style=none] (18) at (3.5, -0) {$=$};
		\node [style=corner1] (19) at (8.75, 1) {$V$};
		\node [style=none] (20) at (-0.5, -0) {$=$};
		\node [style=none] (21) at (-1.5, 1.25) {};
		\node [style=none] (22) at (7, -0) {$=$};
		\node [style=none] (23) at (0.75, 1.5) {};
		\node [style=corner3] (24) at (1.5, -0.5) {};
		\node [style=none] (25) at (2.25, 1.25) {};
		\node [style=none] (26) at (1.5, -0.5) {};
		\node [style=corner3] (27) at (0.75, 1.5) {};
		\node [style=none] (28) at (0.75, 0.25) {};
		\node [style=none] (29) at (2.25, 0.25) {};
		\node [style=none] (30) at (1.5, -0.5) {};
		\node [style=none] (31) at (2.25, 0.25) {};
		\node [style=none] (32) at (0.75, 0.25) {};
		\node [style=none] (33) at (2.25, 0.25) {};
		\node [style=none] (34) at (0.75, 0.25) {};
		\node [style=none] (35) at (1.5, -1.25) {};
		\node [style=box] (36) at (0.75, 0.5) {$U$};
		\node [style=none] (37) at (4.5, 1.5) {};
		\node [style=corner3] (38) at (5.25, 0.75) {};
		\node [style=none] (39) at (5.25, 0.75) {};
		\node [style=none] (40) at (6, 2) {};
		\node [style=none] (41) at (4.5, 1.5) {};
		\node [style=corner3] (42) at (4.5, 1.5) {};
		\node [style=none] (43) at (6, 1.5) {};
		\node [style=none] (44) at (5.25, 0.75) {};
		\node [style=none] (45) at (6, 1.5) {};
		\node [style=none] (46) at (4.5, 1.5) {};
		\node [style=none] (47) at (6, 1.5) {};
		\node [style=none] (48) at (5.25, -1) {};
		\node [style=none] (49) at (4.5, 1.5) {};
		\node [style=box] (50) at (5.25, -0.25) {$W$};
		\node [style=box] (51) at (8.75, -0.25) {$W$};
	\end{pgfonlayer}
	\begin{pgfonlayer}{edgelayer}
		\draw [style=none] (4.center) to (1.center);
		\draw [style=none, bend left=45, looseness=1.25] (10.center) to (9.center);
		\draw [style=none, bend right=45, looseness=1.25] (11.center) to (8.center);
		\draw [style=none] (7.center) to (14);
		\draw (12.center) to (15.center);
		\draw [style=none] (17.center) to (16.center);
		\draw [style=none] (6.center) to (21.center);
		\draw [style=none, bend left=45, looseness=1.25] (30.center) to (34.center);
		\draw [style=none, bend right=45, looseness=1.25] (26.center) to (31.center);
		\draw [style=none] (35.center) to (24);
		\draw (28.center) to (23.center);
		\draw [style=none] (29.center) to (25.center);
		\draw [style=none, bend left=45, looseness=1.25] (44.center) to (49.center);
		\draw [style=none, bend right=45, looseness=1.25] (39.center) to (45.center);
		\draw [style=none] (48.center) to (38);
		\draw (41.center) to (37.center);
		\draw [style=none] (43.center) to (40.center);
	\end{pgfonlayer}
\end{tikzpicture}}

\]
and so $W = \id{}$. Hence $U \circ Q = Q \circ W = Q$ and so $U = \id{}$. 

For the next property, let $m \colon \obb{I} \to A \pcoprod B$ be a diagonal monomorphism and $U$ a phase on $A \pcoprod B$ with $U \circ m = m$. We need to show that $U = \id{A \pcoprod B}$. Let $\tinymultflip[whitedot] \colon A \pcoprod B \to (A \pcoprod B) \otimes \obb{I}$ with $\tinymultflip[whitedot] \circ \pcoproj_A = (\pcoproj_A \circ \pcoproj_1) \circ {\rho_A}^{-1}$ and $\tinymultflip[whitedot] \circ \pcoproj_B = (\pcoproj_B \circ \pcoproj_2) \circ {\rho_B}^{-1}$. Applying coprojections and using distributivity we see that there are phases $V$ and $W$ on $\obb{I}$ with
\[
\scalebox{0.8}{\begin{tikzpicture}
	\begin{pgfonlayer}{nodelayer}
		\node [style=none] (0) at (9.5, 1.75) {};
		\node [style=none] (1) at (8, 0.5) {};
		\node [style=none] (2) at (8.75, -0.25) {};
		\node [style=corner1] (3) at (12, -0.5) {$W$};
		\node [style=none] (4) at (8, 0.5) {};
		\node [style=none] (5) at (10.5, -0) {$=$};
		\node [style=none] (6) at (8.75, -0.25) {};
		\node [style=none] (7) at (8, 0.5) {};
		\node [style=none] (8) at (12, -1.25) {};
		\node [style=corner3] (9) at (8.75, -0.25) {};
		\node [style=none] (10) at (8.75, -1.25) {};
		\node [style=none] (11) at (9.5, 0.5) {};
		\node [style=none] (12) at (8, 1.75) {};
		\node [style=corner1] (13) at (8, 0.75) {$m$};
		\node [style=none] (14) at (12.75, 2.5) {};
		\node [style=none] (15) at (12, 1.75) {};
		\node [style=none] (16) at (11.25, 2.5) {};
		\node [style=white dot] (17) at (12, 1.75) {};
		\node [style=none] (18) at (11.25, 2.5) {};
		\node [style=none] (19) at (11.25, 2.5) {};
		\node [style=none] (20) at (12, 1.75) {};
		\node [style=box] (21) at (12, 0.75) {$m$};
		\node [style=none] (22) at (-4.5, 1.75) {};
		\node [style=none] (23) at (-3.75, 1) {};
		\node [style=none] (24) at (-3.75, -1.25) {};
		\node [style=none] (25) at (-3, 1.75) {};
		\node [style=none] (26) at (-4.5, 1.75) {};
		\node [style=none] (27) at (-1, 0.5) {};
		\node [style=none] (28) at (-0.25, -0.25) {};
		\node [style=none] (29) at (-4.5, 1.75) {};
		\node [style=white dot] (30) at (-3.75, 1) {};
		\node [style=none] (31) at (-0.25, -0.25) {};
		\node [style=none] (32) at (-1, 1.75) {};
		\node [style=none] (33) at (-3.75, 1) {};
		\node [style=none] (34) at (-2, -0) {$=$};
		\node [style=none] (35) at (0.5, 0.5) {};
		\node [style=none] (36) at (-1, 0.5) {};
		\node [style=none] (37) at (-0.25, -1.25) {};
		\node [style=none] (38) at (-1, 0.5) {};
		\node [style=none] (39) at (0.5, 1.75) {};
		\node [style=box] (40) at (-3.75, -0) {$U$};
		\node [style=white dot] (41) at (-0.25, -0.25) {};
		\node [style=box] (42) at (0.5, 1) {$V$};
	\end{pgfonlayer}
	\begin{pgfonlayer}{edgelayer}
		\draw [style=none, bend left=45, looseness=1.25] (6.center) to (4.center);
		\draw [style=none] (10.center) to (9);
		\draw [style=none, bend right=45, looseness=1.25] (2.center) to (11.center);
		\draw [style=none] (1.center) to (12.center);
		\draw [style=none] (11.center) to (0.center);
		\draw [style=none, bend left=45, looseness=1.25] (15.center) to (16.center);
		\draw [style=none, bend right=45, looseness=1.25] (20.center) to (14.center);
		\draw [style=none] (8.center) to (20.center);
		\draw [style=none, bend left=45, looseness=1.25] (31.center) to (38.center);
		\draw [style=none] (37.center) to (41);
		\draw [style=none, bend right=45, looseness=1.25] (28.center) to (35.center);
		\draw [style=none] (36.center) to (32.center);
		\draw [style=none] (35.center) to (39.center);
		\draw [style=none, bend left=45, looseness=1.25] (33.center) to (29.center);
		\draw [style=none, bend right=45, looseness=1.25] (23.center) to (25.center);
		\draw [style=none] (24.center) to (23.center);
	\end{pgfonlayer}
\end{tikzpicture}}

\]
Then we have
\[
\scalebox{0.8}{\begin{tikzpicture}
	\begin{pgfonlayer}{nodelayer}
		\node [style=none] (0) at (-10.25, 1.25) {};
		\node [style=none] (1) at (-11.75, -0) {};
		\node [style=none] (2) at (-11, -0.75) {};
		\node [style=corner1] (3) at (-7, -1) {$W$};
		\node [style=none] (4) at (-11.75, -0) {};
		\node [style=none] (5) at (-9, -0) {$=$};
		\node [style=none] (6) at (-11, -0.75) {};
		\node [style=none] (7) at (-11.75, -0) {};
		\node [style=none] (8) at (-7, -1.75) {};
		\node [style=corner3] (9) at (-11, -0.75) {};
		\node [style=none] (10) at (-11, -1.75) {};
		\node [style=none] (11) at (-10.25, -0) {};
		\node [style=none] (12) at (-11.75, 1.25) {};
		\node [style=corner1] (13) at (-11.75, 0.25) {$m$};
		\node [style=none] (14) at (-6.25, 2) {};
		\node [style=none] (15) at (-7, 1.25) {};
		\node [style=none] (16) at (-7.75, 2) {};
		\node [style=white dot] (17) at (-7, 1.25) {};
		\node [style=none] (18) at (-7.75, 2) {};
		\node [style=none] (19) at (-7.75, 2) {};
		\node [style=none] (20) at (-7, 1.25) {};
		\node [style=box] (21) at (-7, 0.25) {$m$};
		\node [style=none] (22) at (0, 2) {};
		\node [style=none] (23) at (0.75, 1.25) {};
		\node [style=none] (24) at (0.75, 1.25) {};
		\node [style=none] (25) at (0, 3.25) {};
		\node [style=none] (26) at (1.5, 2) {};
		\node [style=none] (27) at (0, 2) {};
		\node [style=none] (28) at (0.75, 1) {};
		\node [style=none] (29) at (0, 2) {};
		\node [style=none] (30) at (1.5, 3.25) {};
		\node [style=white dot] (31) at (0.75, 1.25) {};
		\node [style=box] (32) at (1.5, 2.5) {$V$};
		\node [style=none] (33) at (-4.25, 3.25) {};
		\node [style=none] (34) at (-3.5, -1.75) {};
		\node [style=none] (35) at (-5, -0) {$=$};
		\node [style=box] (36) at (-3.5, 0.25) {$m$};
		\node [style=none] (37) at (-2.75, 3.25) {};
		\node [style=none] (38) at (-3.5, 2.5) {};
		\node [style=none] (39) at (-4.25, 3.25) {};
		\node [style=none] (40) at (-3.5, 2.5) {};
		\node [style=corner1] (41) at (-3.5, -1) {$W$};
		\node [style=none] (42) at (-4.25, 3.25) {};
		\node [style=white dot] (43) at (-3.5, 2.5) {};
		\node [style=none] (44) at (-1.5, -0) {$=$};
		\node [style=box] (45) at (-3.5, 1.5) {$U$};
		\node [style=none] (46) at (0.75, -1.75) {};
		\node [style=corner1] (47) at (0.75, -1) {$W$};
		\node [style=box] (48) at (0.75, 0.25) {$m$};
		\node [style=none] (49) at (3, -0) {$=$};
		\node [style=none] (50) at (6.25, 1.75) {};
		\node [style=corner1] (51) at (4.75, 0.75) {$m$};
		\node [style=none] (52) at (5.5, -0.75) {};
		\node [style=none] (53) at (5.5, -1.75) {};
		\node [style=none] (54) at (4.75, -0) {};
		\node [style=none] (55) at (6.25, -0) {};
		\node [style=corner3] (56) at (5.5, -0.75) {};
		\node [style=none] (57) at (5.5, -0.75) {};
		\node [style=none] (58) at (4.75, 1.75) {};
		\node [style=none] (59) at (4.75, -0) {};
		\node [style=none] (60) at (4.75, -0) {};
		\node [style=corner1] (61) at (6.25, 0.75) {$V$};
	\end{pgfonlayer}
	\begin{pgfonlayer}{edgelayer}
		\draw [style=none, bend left=45, looseness=1.25] (6.center) to (4.center);
		\draw [style=none] (10.center) to (9);
		\draw [style=none, bend right=45, looseness=1.25] (2.center) to (11.center);
		\draw [style=none] (1.center) to (12.center);
		\draw [style=none] (11.center) to (0.center);
		\draw [style=none, bend left=45, looseness=1.25] (15.center) to (16.center);
		\draw [style=none, bend right=45, looseness=1.25] (20.center) to (14.center);
		\draw [style=none] (8.center) to (20.center);
		\draw [style=none, bend left=45, looseness=1.25] (24.center) to (29.center);
		\draw [style=none] (28.center) to (31);
		\draw [style=none, bend right=45, looseness=1.25] (23.center) to (26.center);
		\draw [style=none] (27.center) to (25.center);
		\draw [style=none] (26.center) to (30.center);
		\draw [style=none, bend left=45, looseness=1.25] (38.center) to (42.center);
		\draw [style=none, bend right=45, looseness=1.25] (40.center) to (37.center);
		\draw [style=none] (34.center) to (40.center);
		\draw [style=none] (28.center) to (46.center);
		\draw [style=none, bend left=45, looseness=1.25] (57.center) to (60.center);
		\draw [style=none] (53.center) to (56);
		\draw [style=none, bend right=45, looseness=1.25] (52.center) to (55.center);
		\draw [style=none] (54.center) to (58.center);
		\draw [style=none] (55.center) to (50.center);
	\end{pgfonlayer}
\end{tikzpicture}}

\]
and so composing with $\tinycounit$ and using monicity of $m$ we obtain
\[
\scalebox{0.8}{\begin{tikzpicture}
	\begin{pgfonlayer}{nodelayer}
		\node [style=none] (0) at (3, -0.5) {$=$};
		\node [style=none] (1) at (1.25, 1.5) {};
		\node [style=none] (2) at (0.5, -0.75) {};
		\node [style=none] (3) at (0.5, -1.75) {};
		\node [style=none] (4) at (-0.25, -0) {};
		\node [style=none] (5) at (1.25, -0) {};
		\node [style=corner3] (6) at (0.5, -0.75) {};
		\node [style=none] (7) at (0.5, -0.75) {};
		\node [style=none] (8) at (-0.25, -0) {};
		\node [style=none] (9) at (-0.25, -0) {};
		\node [style=corner1] (10) at (1.25, 0.5) {$V$};
		\node [style=none] (11) at (-0.25, 1) {};
		\node [style=grey dot] (12) at (1.25, 1.5) {};
		\node [style=none] (13) at (4.5, -1.75) {};
		\node [style=none] (14) at (4.5, 1) {};
	\end{pgfonlayer}
	\begin{pgfonlayer}{edgelayer}
		\draw [style=none, bend left=45, looseness=1.25] (7.center) to (9.center);
		\draw [style=none] (3.center) to (6);
		\draw [style=none, bend right=45, looseness=1.25] (2.center) to (5.center);
		\draw [style=none] (5.center) to (1.center);
		\draw [style=none] (4.center) to (11.center);
		\draw [style=none] (14.center) to (13.center);
	\end{pgfonlayer}
\end{tikzpicture}}

\]
But now $(\tinycounit \otimes \id{\obb{I}}) \circ \tinymultflip$ is a phase and so is epic. Hence by the first part we have $V = \id{\obb{I}}$. Similarly $(\id{A \pcoprod B} \otimes \tinycounit) \circ \tinymultflip[whitedot] = Q$ for some phase $Q$ on $A \pcoprod B$, giving $Q \circ U = Q$ and so $U = \id{}$. 
\end{proof}

\begin{theorem} \label{thm:getmoncoprod}
Let $\catC$ be a monoidal category with distributive monic finite phased coproducts with transitive phases. Then $\plusI{\catC}$ has distributive, monic finite coproducts.  
\end{theorem}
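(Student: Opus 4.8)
The plan is to obtain the coproducts for free from Theorem~\ref{thm:localToGlobal}, and then to deduce their distributivity by transporting the distributivity of $\catC$ across the functor $[-] \colon \plusI{\catC} \to \catC$. The key observation is that by Lemma~\ref{lem:deleter_cancellation_modified} the monoidal unit $I$ --- which is exactly the chosen object used to build $\plusI{\catC}$ --- is a phase generator, so since $\catC$ additionally has monic finite phased coproducts with transitive phases, every hypothesis of Theorem~\ref{thm:localToGlobal} holds and $\plusI{\catC}$ already has monic finite coproducts. From the proof of that theorem these are given by $\obb{A} + \obb{B} = (A \pcoprod B) \pcoprod I$ with coprojections $\pcoproj_{\obb{A}}, \pcoproj_{\obb{B}}$ satisfying $[\pcoproj_{\obb{A}}] = \pcoproj_A$ and $[\pcoproj_{\obb{B}}] = \pcoproj_B$. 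It then remains only to show these coproducts are distributive, i.e.\ that $\obb{A} \tens (-)$ and $(-) \tens \obb{A}$ strongly preserve them; since an ordinary coproduct has no phase other than the identity, this reduces to showing the canonical comparison morphism
\[
d \colon (\obb{A} \tens \obb{B}) + (\obb{A} \tens \obb{C}) \longrightarrow \obb{A} \tens (\obb{B} + \obb{C}), \qquad d \circ \pcoproj_{\obb{A} \tens \obb{B}} = \id{\obb{A}} \tens \pcoproj_{\obb{B}}, \quad d \circ \pcoproj_{\obb{A} \tens \obb{C}} = \id{\obb{A}} \tens \pcoproj_{\obb{C}},
\]
is an isomorphism (and symmetrically for $(-) \tens \obb{A}$), after which transporting the coprojections along $d$ yields the claim.

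To see that $d$ is invertible I would apply the strict monoidal functor $[-]$ of Theorem~\ref{thm:constr_is_monoidal}. Unwinding the object definitions $\obb{A} \tens \obb{B} = A \otimes B \pcoprod I$ and $\obb{B} + \obb{C} = (B \pcoprod C) \pcoprod I$, the functor $[-]$ sends $d$ to a morphism
\[
[d] \colon A \otimes B \pcoprod A \otimes C \longrightarrow A \otimes (B \pcoprod C),
\]
and, using strict monoidality together with $[\pcoproj_{\obb{A} \tens \obb{B}}] = \pcoproj_{A \otimes B}$ and $[\pcoproj_{\obb{B}}] = \pcoproj_B$, one computes $[d] \circ \pcoproj_{A \otimes B} = \id{A} \otimes \pcoproj_B$ and $[d] \circ \pcoproj_{A \otimes C} = \id{A} \otimes \pcoproj_C$. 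Thus $[d]$ is a morphism of exactly the shape~\eqref{eq:distrib-isomorphism}, so since the phased coproducts of $\catC$ are distributive it is an isomorphism in $\catC$.

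Finally, $[-]$ reflects isomorphisms: by Theorem~\ref{thm:localToGlobal} it factors as the quotient functor $\plusI{\catC} \to \quot{\plusI{\catC}}{\sim}$ followed by an equivalence, and a quotient functor reflects isomorphisms --- if $[g]_\tc$ is inverse to $[f]_\tc$ then $g \circ f$ and $f \circ g$ are phases, hence isomorphisms by Corollary~\ref{phase_iso}, so $f$ is invertible. Hence $d$ is an isomorphism, and transporting $\pcoproj_{\obb{A} \tens \obb{B}}, \pcoproj_{\obb{A} \tens \obb{C}}$ along it exhibits $\obb{A} \tens (\obb{B} + \obb{C})$ as a coproduct with coprojections $\id{\obb{A}} \tens \pcoproj_{\obb{B}}$, $\id{\obb{A}} \tens \pcoproj_{\obb{C}}$; the argument for $(-) \tens \obb{A}$ is identical using the other distributivity isomorphism of $\catC$, and monicity was already delivered by Theorem~\ref{thm:localToGlobal}. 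Essentially all the substance here is imported from earlier results, so I do not expect a genuine obstacle; the only point demanding care is the middle paragraph --- checking that $[d]$ really does satisfy the defining equations of~\eqref{eq:distrib-isomorphism} after the object identifications $\obb{A} \tens \obb{B} = A\otimes B \pcoprod I$ and $\obb{B} + \obb{C} = (B \pcoprod C) \pcoprod I$, and observing that ``preserves the coproduct'' upgrades to ``strongly preserves'' precisely because $d$ is an honest isomorphism commuting with the coprojections.
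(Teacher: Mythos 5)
Your proposal is correct and follows essentially the same route as the paper: invoke Lemma~\ref{lem:deleter_cancellation_modified} to make $I$ a phase generator, get monic finite coproducts from Theorem~\ref{thm:localToGlobal}, then apply the strict monoidal functor $[-]$ to the canonical comparison morphism, use distributivity in $\catC$ to see its image is an isomorphism of the shape~\eqref{eq:distrib-isomorphism}, and conclude via the fact that $[-]$ reflects isomorphisms because phases are invertible. The only cosmetic difference is that you spell out the isomorphism-reflection argument and the reduction of ``strongly preserves'' to the comparison map being invertible, which the paper leaves terse.
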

\begin{proof}
The monoidal unit $I$ is a phase generator by Lemma~\ref{lem:deleter_cancellation_modified}. Hence by Theorem~\ref{thm:localToGlobal} $\plusI{\ctb}$ has finite coproducts $\obb{A} + \obb{B}$ and these are sent by $[-]$ to phased coproducts in $\ctb$. 
For distributivity consider the unique $f \colon \obb{A} \tens \obb{C} + \obb{B} \tens \obb{C} \to (\obb{A} + \obb{B}) \tens \obb{C}$ in $\plusI{\ctb}$ with $f \circ \pcoproj_1 = \coproj_{\obb{A}} \tens \id{\obb{C}}$ and $f \circ \pcoproj_2 =  \coproj_{\obb{B}} \tens \id{\obb{C}}$. We have
\[
[f] \circ \pcoproj_{A \otimes C} 
=
[f] \circ [\coproj_{\obb{A} \tens \obb{C}}]
=
[\coproj_{\obb{A}} \tens \id{\obb{C}}]
=
[\coproj_{\obb{A}}] \otimes [\id{\obb{C}}]
=
\pcoproj_A \otimes \id{C}
\]
and $[f] \circ \pcoproj_{B \otimes C} = \pcoproj_B \otimes \id{C}$ also. By distributivity in $\ctb$, $[f]$ is then an isomorphism. But since phases are invertible, $[-]$ reflects isomorphisms, so $f$ is invertible. 
\end{proof}

To equip $\plusI{\catC}$ with a choice of global phases we will use the following.

\begin{lemma} \label{lem:central-helpful}
In any monoidal category with distributive monic finite coproducts a scalar $s$ is central iff for every object $A$ there is a scalar $t$ with $s \cdot \id{A} = \id{A} \cdot t$.
\end{lemma}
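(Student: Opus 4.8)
The plan is to handle the easy implication directly and to reduce the converse to a single cancellation against a monic coprojection. If $s$ is central then $s \cdot \id{A} = \id{A} \cdot s$ holds by definition, so we may take $t = s$ for every object $A$; this settles one direction.

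For the converse, I would first record the elementary identities relating the scalar actions to post- and precomposition: for any $g \colon A \to B$ and scalars $s, t$,
\[
s \cdot g = (s \cdot \id{B}) \circ g = g \circ (s \cdot \id{A}), \qquad g \cdot t = (\id{B} \cdot t) \circ g = g \circ (\id{A} \cdot t),
\]
all of which follow from bifunctoriality of $\otimes$ together with naturality of the unitors; and I would note that, since $\id{I}$ is the unit of the scalar monoid, $s \cdot \id{I} = s$ and $\id{I} \cdot t = t$. Reading the easy implication backwards also shows that $s$ is central as soon as $s \cdot \id{A} = \id{A} \cdot s$ for \emph{every} object $A$, so this equation is all that remains to prove.

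Next I would fix an object $A$, form the coproduct $I + A$ with its monic coprojections $\kappa_I \colon I \to I + A$ and $\kappa_A \colon A \to I + A$, and apply the hypothesis at $I + A$ to obtain a scalar $t$ with $s \cdot \id{I+A} = \id{I+A} \cdot t$. Precomposing this equation with $\kappa_I$ and using the identities above rewrites it as $\kappa_I \circ s = \kappa_I \circ t$, so $s = t$ by monicity of $\kappa_I$. Precomposing the same equation instead with $\kappa_A$ gives $\kappa_A \circ (s \cdot \id{A}) = \kappa_A \circ (\id{A} \cdot t) = \kappa_A \circ (\id{A} \cdot s)$, and monicity of $\kappa_A$ yields $s \cdot \id{A} = \id{A} \cdot s$. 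Since $A$ was arbitrary, $s$ is central.

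I do not expect a real obstacle. The one idea the proof turns on is to test against $I + A$ rather than against $A$ alone: composing with $\kappa_I$ pins the witnessing scalar $t$ down to $s$, while composing with $\kappa_A$ carries the resulting equation back to $A$. Everything else is bookkeeping with the unitor naturality squares; in fact distributivity of the coproducts plays no role here, monicity of the coprojections being the only feature actually used.
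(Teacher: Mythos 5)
Your proof is correct and follows essentially the same route as the paper: test the hypothesis on the coproduct $A + I$, cancel against the monic coprojection $\kappa_I$ to force $t = s$, then cancel against $\kappa_A$ to get $s \cdot \id{A} = \id{A} \cdot s$. Your added remark that only monicity (not distributivity) is used is accurate and matches the paper's argument.
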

\begin{proof}
Let $A$ be any object. Suppose that $s \cdot \id{A + I} = \id{A + I} \cdot t$ for some scalar $t$. Then $\coproj_I \circ s = \coproj_I \circ t$ and so by monicity of $\pcoproj_I$ we have $s = t$. But then $\coproj_A \circ (s \cdot \id{A}) = \coproj_A \circ (\id{A} \cdot s)$ and so by monicity again $s \cdot \id{A} = \id{A} \cdot s$. 
\end{proof}

\begin{lemma} \label{lem:phasesAreScalars}
Let $\catC$ be a monoidal category with distributive monic phased coproducts with transitive phases. Then $\plusI{\catC}$ has a canonical choice of global phases
\[
\mathbb{P} := \{u \colon \obb{I} \to \obb{I} \mid u \text{ is a phase on $\obb{I}$ in $\catC$} \}
\]
where $\obb{I} = I \pcoprod I$ is its monoidal unit.
Moreover, phases $U$ on $\obb{A} = A \pcoprod I$ in $\catC$ are precisely morphisms in $\plusI{\catC}$ of the form $u \cdot \id{\obb{A}}$ for some $u \in \mathbb{P}$.
\end{lemma}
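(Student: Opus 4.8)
The plan is to funnel the entire statement into a single claim: \emph{every phase $U$ on $\obb{A} = A \pcoprod \Gen$ in $\catC$ is of the form $u \cdot \id{\obb{A}}$ for some phase $u$ on $\obb{I}$} (and, symmetrically, of the form $\id{\obb{A}} \cdot u$). Granting this, the bookkeeping parts are routine. Each $u \in \mathbb{P}$ is a phase on $\obb{I}$, hence --- being diagonal and fixing $\pcoproj_\Gen$ --- a morphism of $\plusI{\catC}$ and so a scalar, invertible by Corollary~\ref{phase_iso}; since composites and inverses of phases are again phases, $\mathbb{P}$ is closed under composition and inverses. Only centrality then remains among the global-phase axioms, and the ``moreover'' is exactly the claim together with its easy converse.

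The bridge to the functor $[-]$ is the observation that a morphism $V \colon \obb{A} \to \obb{A}$ of $\plusI{\catC}$ has $[V] = \id{A}$ precisely when $V$ is a phase on $\obb{A}$ in $\catC$: a phase fixes $\pcoproj_A$ and $\pcoproj_\Gen$, so together with monicity of $\pcoproj_A$ it is such a $V$, and conversely $[V] = \id{A}$ forces $V \circ \pcoproj_A = \pcoproj_A$, which together with $V \circ \pcoproj_\Gen = \pcoproj_\Gen$ makes $V$ a phase. In particular $\mathbb{P} = \{\, u \colon \obb{I} \to \obb{I} \mid [u] = \id{I} \,\}$. Since $[-]$ is strict monoidal (Theorem~\ref{thm:constr_is_monoidal}) and $[u] = \id{I}$ for $u \in \mathbb{P}$, we get $[u \cdot \id{\obb{A}}] = [u] \cdot \id{A} = \id{A}$ and likewise $[\id{\obb{A}} \cdot u] = \id{A}$, so $u \cdot \id{\obb{A}}$ and $\id{\obb{A}} \cdot u$ are always phases on $\obb{A}$; this gives the inclusion $\{ u \cdot \id{\obb{A}} \mid u \in \mathbb{P} \} \subseteq \{\text{phases on }\obb{A}\}$ for free, and the claim supplies the reverse inclusion.

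For centrality I would invoke Lemma~\ref{lem:central-helpful}: by Theorem~\ref{thm:getmoncoprod} the category $\plusI{\catC}$ has distributive monic finite coproducts, so a scalar $s$ is central as soon as for each object $\obb{A}$ there is a scalar $t$ with $s \cdot \id{\obb{A}} = \id{\obb{A}} \cdot t$. Taking $s = u \in \mathbb{P}$: by the previous paragraph $u \cdot \id{\obb{A}}$ is a phase on $\obb{A}$, and by the right-handed form of the claim it equals $\id{\obb{A}} \cdot t$ for some $t \in \mathbb{P}$, which is a scalar. Hence every $u \in \mathbb{P}$ is central, $\mathbb{P}$ is a valid choice of global phases, and the ``moreover'' is exactly the two inclusions above.

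The main obstacle is thus the claim itself, where the weak universal property of phased coproducts means $u$ cannot simply be ``read off'' from $U$. I would prove it in the style of Lemma~\ref{lem:deleter_cancellation_modified}. Transport $U$ along the right unitor of $\plusI{\catC}$ to obtain $U \tens \id{\obb{I}} = r_{\obb{A}}^{-1} \circ U \circ r_{\obb{A}} \colon \obb{A} \tens \obb{I} \to \obb{A} \tens \obb{I}$, and work with the corner map $c_{\obb{A},\obb{I}} \colon \obb{A} \tens \obb{I} \to \obb{A} \otimes \obb{I}$ together with the deleter $\tinycounit \colon \obb{I} \to I$ (with $\tinycounit \circ \pcoproj_1 = \tinycounit \circ \pcoproj_2 = \id{I}$) and a comultiplication-type map $\tinymultflip[whitedot] \colon \obb{A} \to \obb{A} \otimes \obb{I}$ sending $\pcoproj_A$ into one distributivity summand via $\rho_A^{-1}$ and $\pcoproj_\Gen$ into the other via $\rho_I^{-1}$. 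Using that $\otimes$ strongly preserves phased coproducts in $\catC$ and applying coprojections, the equations $U \circ \pcoproj_A = \pcoproj_A$ and $U \circ \pcoproj_I = \pcoproj_I$ decompose the action of $U \tens \id{\obb{I}}$ into an $A$-part --- a phase on $\obb{A}$ that fixes $\pcoproj_A$, hence absorbable --- and a $\Gen$-slot part, which is a well-defined phase $u$ on $\obb{I}$; post-composing with $\id{\obb{A}} \otimes \tinycounit$ and using that $\tinymultflip[whitedot]$ is split monic collapses the $A$-part and rewrites $U \tens \id{\obb{I}}$ as $\id{\obb{A}} \tens u$, whence $U = \id{\obb{A}} \cdot u$. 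The technical heart --- and the step I expect to be fiddliest --- is getting this distributivity-driven separation and the attendant phase-tracking exactly right, precisely as in Lemma~\ref{lem:deleter_cancellation_modified} and the unit-object argument of Theorem~\ref{thm:constr_is_monoidal}.
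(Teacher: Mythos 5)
Your bookkeeping is essentially the paper's: the bridge observation that $V \colon \obb{A} \to \obb{A}$ in $\plusI{\catC}$ is a phase iff $[V] = \id{A}$, the inclusion $[u \cdot \id{\obb{A}}] = \id{A}$ via strict monoidality of $[-]$, and centrality via Lemma~\ref{lem:central-helpful} together with the right-handed form of the claim all appear verbatim in the published proof. The genuine gap is in the one step carrying all the weight: that every phase $U$ on $\obb{A}$ equals $u \cdot \id{\obb{A}}$. Your sketch asserts that distributivity plus ``applying coprojections'' lets you ``decompose the action of $U \tens \id{\obb{I}}$ into an $A$-part and a $\Gen$-slot part'', but neither distributivity nor the weak universal property can deliver that decomposition. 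Composing with the coprojections of $\obb{A} \tens \obb{I} = (A \otimes I) \pcoprod I$ (or of $\obb{A}$, for your map into $\obb{A} \otimes \obb{I}$) only shows that $(U \otimes \id{\obb{I}}) \circ c_{\obb{A},\obb{I}}$ agrees with $c_{\obb{A},\obb{I}}$ on those coprojections, so they differ by a phase of the \emph{domain}, precomposed; pushing your deleter/split-monic manipulations through yields only identities of the form $U = P \circ Q \circ P^{-1}$ with $P, Q$ phases on $\obb{A}$, never the desired factor $\id{\obb{A}} \otimes u$. The phase $u$ you need lives in the codomain structure $\obb{A} \otimes I \pcoprod \obb{A} \otimes I$ (distributivity applied to $\obb{I} = I \pcoprod I$), and the morphisms you compare map \emph{into} that phased coproduct, not out of it, so weak uniqueness is not applicable there. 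This is also why the analogy with Lemma~\ref{lem:deleter_cancellation_modified} does not transfer: in that proof the relevant domain is $\obb{I}$ itself, so weak uniqueness of maps out of $\obb{I}$ hands you phases on $\obb{I}$ for free, whereas here it hands you phases on $\obb{A}$ or $\obb{A} \tens \obb{I}$.

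What closes the gap --- and what the paper uses --- is precisely the transitive-phases hypothesis, which your argument never invokes (a warning sign, since that hypothesis would otherwise be doing no work in the lemma). The paper's route: $U \tens \id{\obb{I}}$ is a phase on $\obb{A} \tens \obb{I}$ since its image under $[-]$ is an identity; the corner map $c_{\obb{A},\obb{I}}$ is \emph{diagonal} from $\obb{A} \tens \obb{I}$ into $\obb{A} \otimes \obb{I}$ regarded as the phased coproduct $\obb{A} \otimes I \pcoprod \obb{A} \otimes I$; Definition~\ref{def:trans_phases} then gives a phase $W$ of that codomain with $c_{\obb{A},\obb{I}} \circ (U \tens \id{\obb{I}}) = W \circ c_{\obb{A},\obb{I}}$, and strong preservation under $\obb{A} \otimes (-)$ forces $W = \id{\obb{A}} \otimes u$ for some phase $u$ on $\obb{I}$; monicity of $c_{\obb{A},\obb{I}}$ and the definition of $\tens$ then give $U \tens \id{\obb{I}} = \id{\obb{A}} \tens u$, i.e.\ $U = u \cdot \id{\obb{A}}$, with the other-sided form obtained dually. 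Inserting this transitivity step where your ``decomposition'' sits turns your outline into the paper's proof; without it, the central claim is unsupported.
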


\begin{proof}
We begin with the second statement. 
An endomorphism $U$ of $\obb{A}$ in $\plusI{\ctb}$ is a phase on $\obb{A}$ in $\ctb$ iff $[U] = \id{A}$.
For any $u$ as above, since $[-]$ is strict monoidal we indeed have $[u \cdot \id{\obb{A}}] = [u] \cdot [\id{\obb{A}}] = \id{A}$, and so $u \cdot \id{\obb{A}}$ is a phase. 

Conversely, for any phase $U$ on $\obb{A}$, consider it instead as an automorphism $V$ of $\obb{A} \tens \obb{I}$ in $\plusI{\ctb}$. Then $[V] = \id{A}$, and so $V$ is a phase on $\obb{A} \tens \obb{I}$. 
Now in $\ctb$, by distributivity, $\obb{A} \otimes \obb{I}$ forms $A \otimes \obb{I} \pcoprod I \otimes \obb{I}$ with every phase being of the form $\id{\obb{A}} \otimes u$ for some phase $u$ on $\obb{I}$. Moreover, $c:= c_{\obb{A}, \obb{I}} \colon \obb{A} \tens \obb{I} \to \obb{A} \otimes \obb{I}$ is then diagonal as a morphism from $\obb{A} \tens \obb{I}$ into this phased coproduct. Hence by transitivity $c \circ V = (\id{\obb{A}} \otimes u) \circ c$ for some phase $u$ of $\obb{I}$. But this states precisely that in $\plusI{\ctb}$ we have $V = \id{\obb{A}} \tens u$ or equivalently $U = u \cdot \id{\obb{A}}$.

Dually, every phase is of the form $\id{\obb{A}} \cdot v$ for some $v \in \mathbb{P}$. In particular for each $u \in \mathbb{P}$ so is $u \cdot \id{\obb{A}}$. Hence by Lemma~\ref{lem:central-helpful} every $u \in \mathbb{P}$ is central, making $\mathbb{P}$ a valid choice of global phases.
\end{proof} 

\begin{corollary} \label{cor:phcoprodcorrespon}
There is a one-to-one correspondence, up to monoidal equivalence, between monoidal categories 
\begin{itemize}
\item $\catC$ with distributive, monic finite phased coproducts with transitive phases;
\item $\catD$ with distributive, monic finite coproducts and choice of global phases $\mathbb{P}$;
\end{itemize}
given by $\catC \mapsto \plusI{\catC}$ and $\catD \mapsto \catD_\quotP$. 
\end{corollary}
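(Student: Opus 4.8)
The plan is to check that the two assignments $\catC \mapsto \plusI{\catC}$ and $\catD \mapsto \catD_\quotP$ are well-defined on the stated classes and mutually inverse up to monoidal equivalence preserving the extra structure. Well-definedness of $\catC \mapsto \plusI{\catC}$ is exactly the content of Theorems~\ref{thm:constr_is_monoidal} and~\ref{thm:getmoncoprod} together with Lemma~\ref{lem:phasesAreScalars}: $\plusI{\catC}$ is monoidal with distributive monic finite coproducts and a canonical choice of global phases $\mathbb{P}$. For $\catD \mapsto \catD_\quotP$, Lemma~\ref{lem:globaltolocalmonoidal} gives all of the required properties except that the induced phased coproducts be \emph{monic}; but this is short: if $\kappa \circ f \simP \kappa \circ g$ for a coprojection $\kappa$ of $\catD$, then $\kappa \circ f = u \cdot (\kappa \circ g) = \kappa \circ (u \cdot g)$ for some $u \in \mathbb{P}$ by centrality, so monicity of $\kappa$ in $\catD$ gives $f = u \cdot g \simP g$, showing $[\kappa]_\tc$ is monic.

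For the round trip $(\plusI{\catC})_\quotP \simeq \catC$, the key observation is that, by the second statement of Lemma~\ref{lem:phasesAreScalars}, the trivial isomorphisms $\{p \cdot \id{\obb{A}} \mid p \in \mathbb{P}\}$ that the canonical global phases put on $\plusI{\catC}$ are precisely the phases of $\obb{A}$; hence the congruence defining $(\plusI{\catC})_\quotP$ is literally the one in Theorem~\ref{thm:localToGlobal}. That theorem then yields an equivalence $\quot{\plusI{\catC}}{\sim} \simeq \catC$, realised by the factorisation $\overline{[-]}$ of the strict monoidal functor $[-] \colon \plusI{\catC} \to \catC$ (Theorem~\ref{thm:constr_is_monoidal}) through the quotient functor $[-]_\tc$. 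Since $[-]_\tc$ is strict monoidal, identity on objects and full, $\overline{[-]}$ is strict monoidal, and being an equivalence it is a monoidal equivalence; by Lemma~\ref{lem:generalrecipeforphcoprod} it identifies the phased coproducts of $\catC$ with those that $(\plusI{\catC})_\quotP$ inherits from the coproducts of $\plusI{\catC}$.

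For the other round trip I would build an explicit equivalence $K \colon \catD \to \plusI{(\catD_\quotP)}$. On objects, fix for each $A$ a coproduct $A + I$ of $\catD$ and let $K(A)$ be the phased coproduct of $A$ and $I$ in $\catD_\quotP$ with coprojections $[\kappa_A]_\tc, [\kappa_I]_\tc$; on morphisms set $K(f) := [f + \id{I}]_\tc$, which is diagonal and fixes the $I$-coprojection, hence a morphism of $\plusI{(\catD_\quotP)}$. Essential surjectivity is immediate since any object of $\plusI{(\catD_\quotP)}$ is a phased coproduct of some $A$ and $I$, hence isomorphic to $K(A)$ by Lemma~\ref{lem:isoms}. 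For fullness and faithfulness: a morphism $K(A) \to K(B)$ has a representative $h \colon A + I \to B + I$ in $\catD$ with $h \circ \kappa_I = u \cdot \kappa_I$ and $h \circ \kappa_A \simP \kappa_B \circ k$ for some $u \in \mathbb{P}$, $k \colon A \to B$; rescaling $h$ by $u^{-1}$ and absorbing the residual scalar into $k$ produces a representative of the form $k + \id{I}$, and monicity of $\kappa_I$ and $\kappa_B$ in $\catD$, together with the normalisation $h \circ \kappa_I = \kappa_I$, make both this representative and $k$ unique. Thus $K$ is an equivalence with $K(I) = \obb{I}$, and for scalars $s \colon I \to I$ one checks $K(s) = [s + \id{I}]_\tc$ is a phase on $\obb{I}$ in $\catD_\quotP$ iff $s \circ \kappa_1 \simP \kappa_1$ iff $s \in \mathbb{P}$, so $K$ restricts to a bijection between the two global phase groups. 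It remains to give $K$ a monoidal structure: $K(A \otimes B)$ and $K(A) \tens K(B)$ are both chosen phased coproducts of $A \otimes B$ and $I$, so Lemma~\ref{lem:isoms} supplies canonical isomorphisms $K(A) \tens K(B) \isomto K(A \otimes B)$, natural in $A,B$, whose coherence is checked against the defining property of $\tens$ from Theorem~\ref{thm:constr_is_monoidal}.

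I expect the main obstacle to be this last step: verifying that those coherence isomorphisms make $K$ a strong monoidal functor (and braided/symmetric when $\catD$ is), and carefully aligning the unit objects $K(I)$ and $\obb{I}$ and the two global phase groups. Everything else reduces to bookkeeping over Theorems~\ref{thm:localToGlobal}, \ref{thm:constr_is_monoidal}, \ref{thm:getmoncoprod} and Lemmas~\ref{lem:globaltolocalmonoidal}, \ref{lem:phasesAreScalars}, \ref{lem:generalrecipeforphcoprod}, \ref{lem:isoms}, since monoidal equivalence automatically preserves the universal-type properties (distributivity, monicity, (phased) coproducts, transitivity of phases) that distinguish the two classes.
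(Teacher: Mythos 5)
Your proposal is correct and follows essentially the same route as the paper's proof: well-definedness via Theorems~\ref{thm:constr_is_monoidal} and~\ref{thm:getmoncoprod} and Lemmas~\ref{lem:globaltolocalmonoidal} and~\ref{lem:phasesAreScalars}, the equivalence $\catC \simeq \plusI{\catC}_\quotP$ obtained from Theorem~\ref{thm:localToGlobal} by identifying phases with global-phase multiples (Lemma~\ref{lem:phasesAreScalars}), and an explicit full, faithful, essentially surjective functor $\catD \to \plusI{\catD_\quotP}$ given by $A \mapsto A + I$, $f \mapsto [f + \id{I}]$, with fullness/faithfulness proved by normalising representatives up to a global phase using monicity, essential surjectivity by Lemma~\ref{lem:isoms}, and strong monoidality attributed to distributivity (left equally terse in the paper). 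Your additional verification that the phased coproducts in $\catD_\quotP$ are monic is a small detail the paper leaves implicit, and your argument for it is correct.
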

\begin{proof}
The assignments are well-defined by Theorems~\ref{thm:constr_is_monoidal} and~\ref{thm:getmoncoprod} and Lemmas~\ref{lem:globaltolocalmonoidal} and~\ref{lem:phasesAreScalars}. Now by Theorem~\ref{thm:localToGlobal}, $[-]$ induces an equivalence $\catC \simeq \quot{\plusI{\catC}}{\sim}$ where $f \sim g$ when $f = g \circ U$ for some phase $U$ in $\catC$. But now this is strict monoidal since $[-]$ is, and by Lemma~\ref{lem:phasesAreScalars} in $\plusI{\catC}$ every such $U$ is of the form $\id{} \cdot u$ for some $u \in \mathbb{P}$. Hence $\quot{\plusI{\catC}}{\sim} = \plusI{\catC}_\quotP$.

Conversely, we must check that $\catD \simeq \plusI{\catD_\quotP}$ for such a category $\catD$. Define a functor $F \colon \catD \to \plusI{\catD_\quotP}$ on objects by $F(A) = A + I$ and for $f \colon A \to B$ by setting $F(f) = [f + \id{I}]_\tp \colon A + I \to B + I$, where $[-]_\tp$ denotes equivalence classes under~\eqref{eq:quotient_rule_general}. By Lemma~\ref{lem:globaltolocalmonoidal} the phased coproducts in $\catD_\quotP$ are precisely the coproducts in $\catD$, making $F$ well-defined. Now every $[g]_\tp \colon F(A) \to F(B)$ in $\plusI{\catD_\quotP}$ has $g = h + u$ for a unique $h \colon A \to B$ and $(u \colon I \to I) \in \mathbb{P}$. Then $[g]_\tp = F(f)$ iff 
\[
(f + \id{I}) = v \cdot (h + u) = (v \cdot h + v \cdot u)
\]
for some $v \in \mathbb{P}$. So $[g]_\tp = F(f)$ for the unique morphism $f = u^{-1} \cdot h$, making $F$ full and faithful. It is essentially surjective on objects by Lemma~\ref{lem:isoms}, and distributivity in $\catD$ ensures that $F$ is strong monoidal. Clearly $F$ also restricts to an isomorphism of global phase groups. 
\end{proof}

This correspondence can be made into an equivalence of categories, giving the $\mathsf{GP}$ construction a universal property; see appendix~\ref{app:universality}.

\begin{examples} \label{ex:ofconstruction}
We've seen that $\VecSp_{}$, $\Hilb$ and $\FVecSp_{k}$ satisfy the above properties of $\catD$ and so they may be reconstructed from their quotients as
\[
\VecSp_{} \simeq \plusI{\VecP} 
\qquad
\Hilb \simeq \plusI{\HilbP}
\qquad
\FVecSp_{k} \simeq \plusI{\VecProj}
\]
\end{examples}

\section{Phased Biproducts} \label{sec:phbiprod}

Recall that a category $\catC$ has \emph{zero arrows} when it comes with a family of morphisms $0  = 0_{A,B} \colon A \to B$ with $f \circ 0 = 0$ and $0 = 0 \circ g$ for all morphisms $f, g$. In a monoidal category we also require $0 \otimes f = 0$ and $g \otimes 0 = 0$. As in Remark~\ref{rem:phasedproducts} a \emph{phased product} $A \pprod B$ is defined via projections $\pproj_A \colon A \pprod B \to A$ and $\pproj_B \colon A \pprod B \to B$ satisfying the dual condition to that of a phased coproduct.

\begin{definition} \label{def:phased_biprod}
In a category with zero arrows, a \emph{phased biproduct} of objects $A, B$ is an object $A \pbiprod B$ together with morphisms
\[
\begin{tikzcd}[row sep = large]
A \rar[shift left = 2.5]{\pcoproj_A}  & A \pbiprod B  \lar[shift left = 2.5]{\pproj_A} \rar[shift right = 2.5,swap]{\pproj_B} & B \lar[shift right = 2.5,swap]{\pcoproj_B}  
\end{tikzcd}
\]
satisfying the equations
\begin{align*}
\pproj_A \circ \pcoproj_A &= \id{A} & \pproj_B \circ \pcoproj_A &= 0\\
\pproj_A \circ \pcoproj_B &= 0 & \pproj_B \circ \pcoproj_B &= \id{B} 
\end{align*}
\noindent
and for which $(\pcoproj_A, \pcoproj_B)$ and $(\pproj_A, \pproj_B)$ make $A \pbiprod B$ a phased coproduct and phased product, respectively, such that each have the same phases $U \colon A \pbiprod B \to A \pbiprod B$. 
\end{definition}

We can define a phased biproduct $A_1 \pbiprod \dots \pbiprod A_n$ of any finite collection of objects similarly. The usual notion of a biproduct~\cite[§~VIII.2]{mac1978categories} is then a phased biproduct whose only phase is the identity. By an empty phased biproduct we mean a \emph{zero object}; an object $0$ with $\id{0} = 0$, or equivalently which is both initial and terminal. 

\begin{lemma} \label{lem:ph_biprod}
Let $\catC$ have a zero object and binary phased biproducts. Then:
\begin{enumerate}[label=\arabic*., ref=\arabic*]
\item \label{enum:hasfinite}
$\catC$ has finite phased biproducts;
\item \label{enum:unique}
Any phased coproduct $A_1 \pcoprod \dots \pcoprod A_n$ in $\catC$ has a unique phased biproduct structure;
\item \label{enum:transitive}
All phases are transitive.
\end{enumerate}
\end{lemma}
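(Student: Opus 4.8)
The plan is to prove the three statements in turn, with the claim on uniqueness of biproduct structure relying on the one asserting finite phased biproducts. Throughout I will lean on one observation. If $U$ is a phase of a phased biproduct $X \pbiprod Y$, then by Definition~\ref{def:phased_biprod} it is a phase of both the phased coproduct and the phased product structure, so $U \circ \pcoproj_X = \pcoproj_X$, $U \circ \pcoproj_Y = \pcoproj_Y$, $\pproj_X \circ U = \pproj_X$ and $\pproj_Y \circ U = \pproj_Y$. Hence a morphism out of $X \pbiprod Y$ that factors through a projection $\pproj_X$ or $\pproj_Y$ is fixed by right composition with a phase, and dually for a morphism into $X \pbiprod Y$ factoring through a coprojection. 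Together with the defining uniqueness-up-to-phase property of phased coproducts and of phased products, this promotes ``agrees on all (co)projections'' to actual equality for such morphisms.

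For the statement that $\catC$ has finite phased biproducts I would induct on the number of objects, the cases of zero (the zero object), one ($A$ with identities) and two objects being immediate. Given $A_1, \dots, A_n$, pick a phased biproduct $B$ of $A_1, \dots, A_{n-1}$ by induction and a binary phased biproduct $B \pbiprod A_n$, and equip the latter with the composite coprojections $\pcoproj_B \circ \pcoproj_i$ for $i < n$ and $\pcoproj_{A_n}$, and composite projections $\pproj_i \circ \pproj_B$ for $i < n$ and $\pproj_{A_n}$. By Proposition~\ref{prop:assoc} and its dual these are a phased coproduct and a phased product structure on $B \pbiprod A_n$ for $A_1, \dots, A_n$, and the equations $\pproj_i \circ \pcoproj_j = \delta_{ij}$ follow from a short case check using the two given biproduct equation sets and $\pproj_B \circ \pcoproj_B = \id{B}$. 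The substantive point is that the resulting coproduct-phases and product-phases coincide. Given $U$ with $U \circ \pcoproj_i = \pcoproj_i$ for all $i$, comparison on the coprojections of $B$ gives $U \circ \pcoproj_B = \pcoproj_B \circ W$ for a phase $W$ of $B$, which by the inductive hypothesis is also a product-phase of $B$; then $\pproj_B \circ U$ and $W \circ \pproj_B$ agree on the two coprojections of $B \pbiprod A_n$, so by the observation above $\pproj_B \circ U = W \circ \pproj_B$, whence $\pproj_i \circ U = \pproj_i$ for $i < n$, while the same observation applied to $\pproj_{A_n}$ gives $\pproj_{A_n} \circ U = \pproj_{A_n}$. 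The reverse implication is dual. I expect this phase-matching step to be the main obstacle of the whole lemma: it is precisely where one must combine the biproduct (rather than merely coproduct) hypothesis with the inductive hypothesis.

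For the uniqueness of phased biproduct structure on a phased coproduct, existence follows from the previous part: given a phased coproduct $S$ of $A_1, \dots, A_n$, choose a phased biproduct $B$ of the same objects, take the isomorphism $f \colon S \isomto B$ respecting coprojections from Lemma~\ref{lem:isoms}, and transport the projections by $\pproj_i := \pproj_i \circ f$. The biproduct equations transfer from $B$, the dual of Lemma~\ref{lem:isoms} makes the $\pproj_i$ a phased product structure, and conjugation by $f$ identifies its product-phases with the coproduct-phases of $B$, hence with those of $S$; so this is a phased biproduct structure on $S$. For uniqueness, any two phased biproduct structures $(\pproj_i)$ and $(\pproj_i')$ on $S$ satisfy $\pproj_i \circ \pcoproj_j = \delta_{ij} = \pproj_i' \circ \pcoproj_j$, so agree on coprojections; thus $\pproj_i = \pproj_i' \circ W$ for a phase $W$ of $S$, and as $W$ is then a product-phase of the structure $(\pproj_i')$ we conclude $\pproj_i = \pproj_i' \circ W = \pproj_i'$.

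Finally, for transitivity of phases, the previous part shows every phased coproduct in $\catC$ is a biproduct, so by Definition~\ref{def:trans_phases} it suffices to take a diagonal $f \colon A \pbiprod B \to C \pbiprod D$, say with $f \circ \pcoproj_A = \pcoproj_C \circ g$ and $f \circ \pcoproj_B = \pcoproj_D \circ h$, and a phase $U$ of $A \pbiprod B$. Using the biproduct equations one checks that $\pproj_C \circ f$ agrees with $g \circ \pproj_A$ on the coprojections of $A \pbiprod B$, and since $g \circ \pproj_A$ factors through $\pproj_A$ the observation above forces $\pproj_C \circ f = g \circ \pproj_A$, and likewise $\pproj_D \circ f = h \circ \pproj_B$. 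Then $\pproj_C \circ f \circ U = g \circ \pproj_A \circ U = g \circ \pproj_A = \pproj_C \circ f$ since $U$ is a phase of the biproduct, and similarly with $\pproj_D$ in place of $\pproj_C$. So $f \circ U$ and $f$ have the same composites with the projections of the phased product $C \pbiprod D$, and its defining property yields $f \circ U = V \circ f$ for a phase $V$ of $C \pbiprod D$, as required.
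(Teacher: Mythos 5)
Your proof is correct and follows essentially the same route as the paper: finite phased biproducts are built from the binary ones via Proposition~\ref{prop:assoc} and its dual together with the same phase-matching argument (the paper treats $(A \pbiprod B)\pbiprod C$ and notes the general case is similar, where you phrase it as an induction), uniqueness is obtained by transporting structure along the isomorphism of Lemma~\ref{lem:isoms} and killing the comparison phase exactly as in the paper, and your transitivity argument coincides with the paper's. No gaps to report.
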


\begin{proof}

\ref{enum:hasfinite}. We will show that any object $(A \pbiprod B) \pbiprod C$ forms a phased biproduct of $A$, $B$ and $C$, with the general case of $((A_1 \pbiprod A_2) \pbiprod \cdots ) A_n$ being similar. By Proposition~\ref{prop:assoc} and its dual any such object forms a phased coproduct and product with coprojections $\pcoproj_{A \biprod B} \circ \pcoproj_A$,  
$\pcoproj_{A \biprod B} \circ \pcoproj_B$, and $\pcoproj_C$, and 
projections $\pproj_A \circ \pproj_{A \pbiprod B}$, $\pproj_B \circ \pproj_{A \pbiprod B}$ and $\pproj_C$. It's routine to check that these satisfy the necessary equations.

It remains to check that any endomorphism $U$ of $(A \pbiprod B) \pbiprod C$ preserving these coprojections then preserves the projections, with the converse statement then being dual. In this case we have 
\[
\pproj_{A \pbiprod B} \circ U \circ \pcoproj_{C} = 0 \qquad U \circ \pcoproj_{A \pbiprod B} = \pcoproj_{A \pbiprod B} \circ V\] 
for some phase $V$ on $A \pbiprod B$. But then $\pi_{A \pbiprod B} \circ U$ and $V \circ \pproj_{A \pbiprod B}$ have equal composites with $\pcoproj_{A \pbiprod B}$ and $\pcoproj_C$ and so
\[
 \pi_{A \pbiprod B} \circ U = V \circ \pproj_{A \pbiprod B} \circ W 
 \] 
 for some phase $W$. But then $\pi_{A \pbiprod B} \circ U = V \circ \pproj_{A \pbiprod B}$, ensuring that $U$ preserves the above projections. 

\ref{enum:unique}. We show the result for binary phased coproducts $A \pcoprod B$, with the $n$-ary case being similar. By Lemma~\ref{lem:isoms} any coprojection preserving morphism
\[
\begin{tikzcd}
A \pcoprod B \rar{g} & A \pbiprod B
\end{tikzcd}
\]
  is an isomorphism, and one may then check that $\pproj_A \circ g$ and $\pproj_B \circ g$ form projections making $A \pcoprod B$ a phased biproduct.

 For uniqueness note that for any phased biproduct, any $p_A \colon A \pbiprod B \to A$ with $p_A \circ \coproj_A = \id{A}$ and $p_A \circ \pcoproj_B = 0$ has $p_A = \pi_A \circ U$ for some phase $U$. But $\pi_A \circ U = \pi_A$ and so $p_A = \pi_A$ is unique. 

\ref{enum:transitive}. For any diagonal morphism 
 \[
\begin{tikzcd}
 A \pcoprod B \rar{f} &  C \pcoprod D
\end{tikzcd}
 \]
with $f \circ \pcoproj_A = \pcoproj_C \circ g$, by composing with the coprojections, we see that the unique projections $\pproj_C$ and $\pproj_A$ have $\pproj_C \circ f = g \circ \pproj_A$. Then for any phase $U$ we have
\[
\pproj_C \circ f \circ U 
= 
g \circ \pproj_A \circ U 
=
g \circ \pproj_A
= 
\pproj_C \circ f 
\] 
and $\pproj_D \circ f \circ U = \pproj_D \circ f$ also. Hence $f \circ U = V \circ f$ for some phase $V$ on $C \pcoprod D$. 
\end{proof}

In a category with phased biproducts, by a \indef{phase generator} let us now mean an object satisfying the properties of Definition~\ref{def:phase-gen} along with the dual statements about phased products. 

\begin{lemma} \label{lem:getBiprod}
Let $\catC$ be a category with finite phased biproducts with a phase generator $I$. Then $\plusI{\catC}$ has finite biproducts. Conversely, if $\catD$ has finite biproducts and a transitive choice of trivial isomorphisms then $\quot{\catD}{\sim}$ has finite phased biproducts. 
\end{lemma}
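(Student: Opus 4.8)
\emph{Forward direction.} The plan is to prove the two directions separately, each time reducing to the binary and unary results already established. For the first, suppose $\catC$ has finite phased biproducts and a phase generator $I$. Phased biproducts are monic, since each $\pcoproj_A$ is split by $\pproj_A$, and by Lemma~\ref{lem:ph_biprod} all phases are transitive; hence Theorem~\ref{thm:localToGlobal} applies and $\plusI{\catC}$ has finite monic coproducts, with $[-]$ sending them to phased coproducts of $\catC$. To obtain finite \emph{products} in $\plusI{\catC}$ I would apply Theorem~\ref{thm:localToGlobal} to $\catC^{\op}$: its phased coproducts are the phased products underlying $\catC$'s phased biproducts, which are monic (the $\pproj_A$ are split epic in $\catC$), have transitive phases (the dual half of Lemma~\ref{lem:ph_biprod}, legitimate because for a phased biproduct the coproduct- and product-phases coincide), and have $I$ as a phase generator (the extended Definition~\ref{def:phase-gen} for phased biproducts supplies precisely the dual clauses needed for $\catC^{\op}$). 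The step that needs care is the identification $\plusI{(\catC^{\op})} \cong (\plusI{\catC})^{\op}$: their objects agree because $A \pcoprod I$, $A \pprod I$ and $A \pbiprod I$ are the same gadget (Lemma~\ref{lem:ph_biprod} and its dual), and a morphism is ``diagonal and $\pcoproj_I$-preserving'' if and only if it is ``diagonal for the product structure and $\pproj_I$-preserving'' — one passes between the two conditions by composing with the (co)projections and using that phases preserve both — compatibly with the two $[-]$ functors. Choosing data compatibly, $\obb{A} + \obb{B}$ and $\obb{A} \times \obb{B}$ may be taken to be the single object $(A \pbiprod B) \pbiprod I$, and $\obb{0} = 0 \pbiprod I$ is both initial (Theorem~\ref{thm:localToGlobal}) and, dually, terminal, hence a zero object, and $\plusI{\catC}$ inherits zero arrows.

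It remains to fit products and coproducts together into biproducts. I would equip $\obb{A} + \obb{B}$ with the \emph{canonical} projections $\pproj_{\obb{A}}, \pproj_{\obb{B}}$ determined from the coproduct universal property by $(\id{\obb{A}}, 0)$ and $(0, \id{\obb{B}})$, so the biproduct equations hold by construction, and then show $(\obb{A} + \obb{B}, \pproj_{\obb{A}}, \pproj_{\obb{B}})$ is still a product. Since $[\pproj_{\obb{A}}] \circ \pcoproj_A = \id{A}$ and $[\pproj_{\obb{A}}] \circ \pcoproj_B = 0$, uniqueness of phased-biproduct projections (Lemma~\ref{lem:ph_biprod}) forces $[\pproj_{\obb{A}}] = \pproj_A$, which is also the value of $[-]$ on the product projection $p_{\obb{A}}$ coming from the $\catC^{\op}$ argument. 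Hence the comparison map $\phi = \langle \pproj_{\obb{A}}, \pproj_{\obb{B}} \rangle \colon \obb{A} + \obb{B} \to \obb{A} \times \obb{B}$ has $[\phi]$ preserving $\pproj_A$ and $\pproj_B$, so $[\phi]$ is a phase and therefore an isomorphism (Corollary~\ref{phase_iso}); since $[-]$ reflects isomorphisms, $\phi$ is an isomorphism, exhibiting the canonical data as a product. All finite biproducts then follow from the binary ones and the zero object by the standard associativity argument.

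\emph{Converse direction.} Suppose $\catD$ has finite biproducts and a transitive choice of trivial isomorphisms. As in Lemma~\ref{lem:generalrecipeforphcoprod}, a biproduct $A \biprod B$ in $\catD$ descends to a phased coproduct with coprojections $[\coproj_A], [\coproj_B]$ and, dually, a phased product with projections $[\pproj_A], [\pproj_B]$, both on the single object $[A \biprod B]$ of $\quot{\catD}{\sim}$; the biproduct equations survive because $[-]_\tc$ is a functor and $\quot{\catD}{\sim}$ inherits zero arrows, and $[0]$ is again a zero object. The real content is that these two structures have the same phases. Using that $\coproj_A, \coproj_B$ are jointly epic in $\catD$ one computes that every coproduct-phase is $[\,\mathrm{diag}(p_A, p_B)\,]$ for some $p_A \in \TrI_A$, $p_B \in \TrI_B$ (the endomorphism of $A \biprod B$ with components $\coproj_A \circ p_A$ and $\coproj_B \circ p_B$), and that each such morphism is a product-phase, which needs only the defining axiom of trivial isomorphisms, used to pull $p_A$ past $\pproj_A$. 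For the reverse inclusion, a product-phase $[V]$ satisfies $\pproj_A \circ V = \pproj_A \circ q$ and $\pproj_B \circ V = \pproj_B \circ q'$ with $q, q' \in \TrI_{A \biprod B}$; replacing $V$ by $V \circ q^{-1}$ and using \emph{transitivity} to move $q' \circ q^{-1}$ past $\pproj_B$ shows $V \sim \mathrm{diag}(\id{A}, s)$ for some $s \in \TrI_B$, again a coproduct-phase. With binary phased biproducts and a zero object in hand, Lemma~\ref{lem:ph_biprod} upgrades them to finite phased biproducts.

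The main obstacle throughout is the bookkeeping of phases rather than any single deep step. In the forward direction it is the identification $\plusI{(\catC^{\op})} \cong (\plusI{\catC})^{\op}$ together with reconciling the equation-forced canonical biproduct projections with the product structure, the phase ambiguity of $[-]$ being absorbed by Corollary~\ref{phase_iso} and the fact that $[-]$ reflects isomorphisms; in the converse direction it is the one genuine appeal to transitivity — as opposed to the weaker defining axiom of trivial isomorphisms — precisely where a product-phase must be recognised as a coproduct-phase.
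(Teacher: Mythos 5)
Your proof is correct and follows essentially the same route as the paper: the forward direction invokes Theorem~\ref{thm:localToGlobal} for $\catC$ and, via the self-duality of phased biproducts, for $\catC^{\op}$ (your comparison isomorphism $\phi$ is just a repackaging of the paper's phase-corrected projections $U^{-1}\circ\pi_{\obb{A}}$, $V^{-1}\circ\pi_{\obb{B}}$), while the converse descends a biproduct to a phased coproduct and, dually, a phased product and matches the two phase sets via diagonal matrices of trivial isomorphisms, which is exactly the paper's $s+t=s\times t$ observation. The only slight imprecision is your closing remark locating the sole use of transitivity in the phase-matching step: transitivity is also what legitimises the ``dual'' descent of products to phased products in $\quot{\catD}{\sim}$ (the dual of Lemma~\ref{lem:generalrecipeforphcoprod} needs precisely that condition, and it makes $\quot{\catD^{\op}}{\sim}\cong(\quot{\catD}{\sim})^{\op}$), but since the hypothesis is available this affects only the commentary, not the proof.
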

\begin{proof}
Since $\ctb$ has phased biproducts, any phased coproduct $\obb{A} = A \pcoprod I$ has a unique phased biproduct structure $\pproj_A \colon \obb{A} \to A, \pproj_I \colon \obb{A} \to I$ in $\ctb$, and so we may equivalently view the objects of $\plusI{\ctb}$ as such phased biproducts.  Then $\plusI{\ctb}$ has zero morphisms
\[
0_{\obb{A}, \obb{B}} :=
\begin{tikzcd}
\obb{A} \rar{\pproj_I} & I \rar{\pcoproj_I} & \obb{B}
\end{tikzcd}
\]
and in particular the initial object $\obb{0} = 0 \pcoprod I$ has $\id{\obb{0}} = 0$ and so is a zero object.

Now by Theorem~\ref{thm:localToGlobal} for any objects $\obb{A}, \obb{B} \in \plusI{\ctb}$, any object and morphisms 
\[
\begin{tikzcd}[row sep = large]
\obb{A} 
\rar[shift left = 2.5]{\coproj_{\obb{A}}}  
& 
\obb{A \biprod B}  
\lar[shift left = 2.5]{\pi_{\obb{A}}} 
\rar[shift right = 2.5,swap]{\pi_\obb{B}} 
& \obb{B} 
\lar[shift right = 2.5,swap]{\coproj_\obb{B}}  
\end{tikzcd}
\]
which are sent by $[-]$ to a phased biproduct structure on $A, B$ in $\ctb$ have that $\coproj_{\obb{A}}$ and $\coproj_{\obb{B}}$ form a coproduct of $\obb{A}, \obb{B}$ in $\plusI{\ctb}$, and dually $\pproj_{\obb{A}}$ and $\pproj_{\obb{B}}$ form a product. 
Then since $[-]$ reflects zeroes and $[\pi_{\obb{B}} \circ \coproj_{\obb{A}}] = 0$ we have $\pi_{\obb{B}} \circ \coproj_{\obb{A}} = 0_{\obb{A}, \obb{B}}$, and $\pi_{\obb{A}} \circ \coproj_{\obb{B}} = 0_{\obb{B}, \obb{A}}$ similarly. By applying $[-]$ we also see that $\pi_{\obb{A}} \circ \coproj_{\obb{A}} = U$ and $\pproj_{\obb{B}} \circ \coproj_{\obb{B}} = V$ for some phases $U$ on $\obb{A}$ and $V$ on $\obb{B}$. Then finally $\coproj_{\obb{A}}$, $\coproj_{\obb{B}}$, $U^{-1} \circ \pi_\obb{A}$ and $V^{-1} \circ \pi_{\obb{B}}$ make $\obb{A \biprod B}$ a biproduct in $\plusI{\ctb}$.

For the converse statement, we know that biproducts in $\cta$ form distributive phased coproducts in $\quot{\cta}{\sim}$, and dually they form phased products also. The zero arrows in $\cta$ form zero arrows in $\quot{\cta}{\sim}$ with $[f]_\tc = 0 \implies f = 0$. Hence $[-]_\tc$ preserves the phased biproduct equations. Now, endomorphisms on $A \pbiprod B$ in $\quot{\cta}{\sim}$ preserving the coprojections are (equivalence classes) of endomorphisms $U$ of $A \biprod B$ in $\cta$ of the form $U = s + t$ for some $s \in \mathbb{T}_A$ and $t \in \mathbb{T}_B$. But equivalently $U = s \times t$ and so they preserve the projections in $\quot{\cta}{\sim}$.
\end{proof}

In a monoidal category we say that phased biproducts are \emph{distributive} when they are distributive as phased coproducts. 

\begin{corollary} \label{cor:biproducts}
The assignments $\catC \mapsto \plusI{\catC}$ and $\catD \mapsto \catD_\quotP$ give a one-to-one correspondence, up to monoidal equivalence, between monoidal categories 
\begin{itemize}
\item $\catC$ with finite distributive phased biproducts;
\item $\catD$ with finite distributive biproducts and a choice of global phases $\mathbb{P}$.
\end{itemize}
\end{corollary}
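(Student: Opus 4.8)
The plan is to reduce everything to Corollary~\ref{cor:phcoprodcorrespon}, treating a (distributive) phased biproduct structure as extra data carried along the correspondence between phased coproducts and coproducts-with-global-phases that has already been established. First observe that any phased biproduct is \emph{monic}, since $\pproj_A \circ \pcoproj_A = \id{A}$ makes each coprojection split monic, and has \emph{transitive phases} by Lemma~\ref{lem:ph_biprod}\eqref{enum:transitive}. Hence a monoidal $\catC$ with distributive finite phased biproducts satisfies the hypotheses of Corollary~\ref{cor:phcoprodcorrespon}, so the round-trip monoidal equivalences $\catC \simeq \plusI{\catC}_\quotP$ and $\catD \simeq \plusI{\catD_\quotP}$ are available. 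Since a monoidal equivalence preserves initial and terminal objects it preserves zero objects and zero arrows, and by uniqueness of the phased biproduct structure on a phased coproduct (Lemma~\ref{lem:ph_biprod}\eqref{enum:unique}) such an equivalence automatically respects the full phased biproduct data. So it remains only to check that the two assignments land in the stated classes.

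For $\catC \mapsto \plusI{\catC}$: by Lemma~\ref{lem:deleter_cancellation_modified} the unit $I$ is a phase generator for the phased coproducts of $\catC$, and applying that lemma to $\catC^{\op}$ gives the dual conditions about phased products, so $I$ is a phase generator in the sense required before Lemma~\ref{lem:getBiprod}. That lemma then equips $\plusI{\catC}$ with finite biproducts; these are distributive since by Theorem~\ref{thm:getmoncoprod} they are distributive as coproducts, which is the definition of distributive biproducts. Together with the canonical global phases of Lemma~\ref{lem:phasesAreScalars}, $\plusI{\catC}$ is of the required form for $\catD$.

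For $\catD \mapsto \catD_\quotP$: the global phase group $\mathbb{P}$ induces trivial isomorphisms $\TrI_A = \{ p \cdot \id{A} \mid p \in \mathbb{P} \}$, and these are transitive because each $p$ is central, so $f \circ (p \cdot \id{A}) = (p \cdot \id{B}) \circ f$. By the converse part of Lemma~\ref{lem:getBiprod} the quotient $\catD_\quotP = \quot{\catD}{\sim}$ then has finite phased biproducts, and these are distributive as phased coproducts by Lemma~\ref{lem:globaltolocalmonoidal}, hence distributive as phased biproducts. This places $\catD_\quotP$ in the required class for $\catC$, completing the correspondence.

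The main obstacle is the duality bookkeeping: one must confirm that ``distributive phased biproducts'' is a self-dual condition, i.e.\ that $\catC^{\op}$ again lies in the class of Corollary~\ref{cor:phcoprodcorrespon}, so that Lemma~\ref{lem:deleter_cancellation_modified} (phase generator) and the transitivity and distributivity hypotheses may be invoked on the product side as well as the coproduct side. This amounts to checking that $A \otimes (-)$, which preserves zero arrows and strongly preserves phased coproducts, also strongly preserves phased products — which is immediate from the defining requirement that the coproduct and product structures of a phased biproduct share the same phases.
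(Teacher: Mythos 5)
Your argument is correct and follows essentially the same route as the paper's own (very terse) proof: invoke Lemma~\ref{lem:deleter_cancellation_modified} together with duality to see that $I$ is a phase generator for phased biproducts, apply Lemma~\ref{lem:getBiprod} in both directions, and rest the underlying correspondence on Corollary~\ref{cor:phcoprodcorrespon}. You additionally make explicit the points the paper leaves implicit — that phased biproducts are automatically monic (split monic coprojections) and have transitive phases by Lemma~\ref{lem:ph_biprod}\eqref{enum:transitive}, and that $A\otimes(-)$ also strongly preserves the phased product structure via uniqueness of the phased biproduct structure — which is a welcome, and sound, elaboration rather than a different proof.
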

\begin{proof}
For such a category $\catC$, $I$ is a generator for phased coproducts by Lemma~\ref{lem:deleter_cancellation_modified} and hence also one for phased biproducts dually. Hence by Lemma~\ref{lem:getBiprod} $\plusI{\catC}$ has finite biproducts. The assignment is then well-defined by Lemma~\ref{lem:getBiprod} and Corollary~\ref{cor:phcoprodcorrespon}.
\end{proof}

\begin{example}
Since $\VecSp{}$, $\Hilb$ and $\FVecSp_k$ all have biproducts these become phased biproducts in $\VecP$, $\HilbP$ and $\VecProj$. 
\end{example}

\section{Compact Categories} \label{sec:compact_cats}

Recall that an object $A$ in a monoidal category has a \emph{dual object} when there is some object $A^*$ and morphisms $\eta \colon I \to A^* \otimes A$ and $\varepsilon \colon A \otimes A^* \to I$, depicted  as $\tinycup$ and $\tinycap$, satisfying the \emph{snake equations}:
\[
  \begin{pic}[scale=.5]
    \draw[arrow=.36, arrow=.66] (0,0) to (0,1) to[out=90,in=90,looseness=2] (1,1) to[out=-90,in=-90,looseness=2] (2,1) to (2,2);
  \end{pic}
  =
  \begin{pic}[scale=.5]
  \draw[arrow=.5] (0,0) to (0,2);
  \end{pic}
  \qquad \qquad
  \begin{pic}[scale=.5]
    \draw[reverse arrow=.37, reverse arrow=.67] (0,0) to (0,1) to[out=90,in=90,looseness=2] (-1,1) to[out=-90,in=-90,looseness=2] (-2,1) to (-2,2);
  \end{pic}
  =
  \begin{pic}[scale=.5]
  \draw[reverse arrow=.5] (0,0) to (0,2);
  \end{pic}
\]
Here we label $A$ and $A^*$ with upward and downward directed arrows, respectively. A monoidal category is \emph{compact closed} when every object has a dual~\cite{kelly1980coherence}. 

\begin{example}
An object in $\VecSp_{}$, $\VecP$, $\Hilb$ or $\HilbP$ has a dual precisely when it is finite-dimensional as a vector space. Restricting to finite dimensions gives respective full subcategories $\FVecSp_{}$, $\FVecP$, $\FHilb$ and $\FHilbP$ which are all compact closed.
\end{example}

In this setting, phased coproducts get several nice properties for free.

\begin{lemma} \label{lem:comp-closed}
Let $\ctb$ be a compact category with finite phased coproducts.  
\begin{enumerate}[label=\arabic*., ref=\arabic*]
\item \label{enum:pinit-pterm}
Any initial object in $\ctb$ is a zero object.
\item \label{enum:distmonic}
Phased coproducts are distributive and monic in $\ctb$.
\item \label{enum:phccompclosed}
$\plusI{\ctb}$ is compact closed.
\item \label{enum:pharescalars} 
Every phase $U$ on $\obb{A}$ is of the form $U = u \cdot \id{\obb{A}}$ in $\plusI{\ctb}$, for some global phase $u$.
\end{enumerate}
\end{lemma}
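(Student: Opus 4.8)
The plan is to take the four claims in turn; (3) and (4) are interdependent and are where the compact structure does real work, while (1) and (2) are comparatively formal. For (1): since $\catC$ is compact closed, $A \otimes (-)$ is a left adjoint — to $A^{\ast} \otimes (-)$ — hence preserves colimits, and likewise $(-) \otimes A$; applying these to the initial object (which exists and is a genuine initial object by Proposition~\ref{prop:phase_init_unit}) gives $0 \otimes X \cong 0 \cong X \otimes 0$ for every $X$, so in particular $0^{\ast} \otimes 0 \cong 0$. Then for any $X$ the adjunction $(-) \otimes 0 \dashv (-) \otimes 0^{\ast}$ gives $\catC(X, 0^{\ast}) \cong \catC(X \otimes 0, I) \cong \catC(0, I)$, a singleton, so $0^{\ast}$ is terminal; the same adjunction argument then makes $0^{\ast} \otimes X$ terminal for every $X$, whence $0 \cong 0^{\ast} \otimes 0$ is terminal. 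So $0$ is a zero object — this is the familiar fact that initial objects of compact closed categories are zero objects.

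For (2): monicity is now immediate, since the zero arrows from (1) let us form $r \colon A \pcoprod B \to A$ with $r \circ \pcoproj_A = \id{A}$ and $r \circ \pcoproj_B = 0$, so $\pcoproj_A$, and symmetrically $\pcoproj_B$, is split monic. For distributivity I would isolate the general fact that \emph{a left adjoint strongly preserves phased coproducts} and apply it to $F = A \otimes (-)$ and $F = (-) \otimes A$: given a phased coproduct $B \pcoprod C$, transposing across $F \dashv G$ shows $F(B \pcoprod C)$ with coprojections $F(\pcoproj_B), F(\pcoproj_C)$ satisfies the existence and phase clauses, $F(U)$ being a phase whenever $U$ is; and for the remaining clause one transposes a phase $P$ on $F(B \pcoprod C)$, observes it agrees with the adjunction unit on the coprojections of $B \pcoprod C$, factors it through a phase $U$ there, and transposes back, the triangle identity collapsing the outcome to $P = F(U)$.

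For (3) and (4): by (1)--(2) we may apply Theorem~\ref{thm:constr_is_monoidal}, so $\plusI{\catC}$ is monoidal — symmetric, by the lemma after that theorem — and $[-] \colon \plusI{\catC} \to \catC$ is strict monoidal, in particular $[\obb{A}^{\ast} \tens \obb{A}] = A^{\ast} \otimes A$ for $\obb{A}^{\ast} := A^{\ast} \pcoprod I$. For (4) I would run the argument of Lemma~\ref{lem:phasesAreScalars}: a phase $U$ on $\obb{A} = A \pcoprod I$ reappears as a phase on $\obb{A} \tens \obb{I}$, which by the distributivity of (2) sits inside the phased coproduct $A \otimes \obb{I} \pcoprod I \otimes \obb{I}$, all of whose phases are of the form $\id{\obb{A}} \otimes u$; transporting this along the comparison $c_{\obb{A}, \obb{I}}$ then gives $U = u \cdot \id{\obb{A}}$. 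For (3), use fullness of $[-]$ to lift $\eta_A, \varepsilon_A$ to $\widetilde{\eta} \colon \obb{I} \to \obb{A}^{\ast} \tens \obb{A}$ and $\widetilde{\varepsilon} \colon \obb{A} \tens \obb{A}^{\ast} \to \obb{I}$; applying $[-]$ shows the two snake composites are phases on $\obb{A}$ and on $\obb{A}^{\ast}$, hence by (4) are given by global phases $u$ and $v$, and once one knows $u = v$, rescaling $\widetilde{\eta}$ by $u^{-1}$ yields a genuine dual pair, so $\plusI{\catC}$ is compact closed.

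The hard part is contained in (3)--(4) and concerns the interaction of the quotient with duals. Transporting a phase along $c_{\obb{A}, \obb{I}}$ in (4) needs the phases of $\catC$ to be transitive, which is not assumed, so one must first show that compact closure plus the distributivity of (2) forces transitivity — morally the phased counterpart of ``a compact closed category with finite products has biproducts''; and the coincidence $u = v$ of the two snake scalars in (3) likewise has to be squeezed out of the compact structure (via a zig-zag identity and the cancellation afforded by the monic coprojections) rather than from the formal $\plusI{-}$ machinery. Everything else — the lifts, the snake bookkeeping, and tracking $[-]$ — is routine.
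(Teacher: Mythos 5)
Parts (1) and (2) of your proposal are correct and essentially the paper's own arguments: (1) is the standard fact about initial objects in compact closed categories, and your observation that a left adjoint strongly preserves phased coproducts (transpose, compare with the unit, factor through a phase, transpose back via the triangle identity) is exactly what the paper's wire-bending bijection expresses, with monicity coming from the zero arrows as you say.

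The genuine gap is in how you order (3) and (4). You derive (4) by rerunning the argument of Lemma~\ref{lem:phasesAreScalars}, which transports a phase along $c_{\obb{A},\obb{I}}$ and therefore needs transitive phases; you acknowledge this and propose to first prove that compact closure forces transitivity, but that is precisely the question the paper explicitly leaves open at the end of Section~\ref{sec:compact_cats}, and the analogy you invoke is no help: the paper's corollary on phased biproducts there itself \emph{assumes} transitive phases. Since your (3) is then routed through (4), both items remain unproved. The paper's dependency runs the other way, and no transitivity is needed. For (3): lift $\eta, \epsilon$ along the full functor $[-]$; applying $[-]$ to the two snake composites shows they are endomorphisms of $\obb{A}$ and $\obb{A^*}$ in $\plusI{\catC}$ whose $[-]$-images are identities, i.e.\ phases $U$, $V$ of $\catC$. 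These are invertible (Corollary~\ref{phase_iso}), and their inverses again preserve $\pcoproj_I$, so they are morphisms of $\plusI{\catC}$; correcting the cap by these inverses, e.g.\ $\epsilon' := \epsilon \circ (U^{-1} \tens \id{})$, and using the compatibility $\epsilon \circ (U \tens \id{}) = \epsilon \circ (\id{} \tens V)$ which holds for any candidate cup/cap pair, yields a genuine dual pair. In particular neither the scalar form of the defects nor your ``$u=v$'' step is needed — that concern is a red herring. With (3) in hand, (4) follows by a name argument rather than via Lemma~\ref{lem:phasesAreScalars}: for a phase $U$ on $\obb{A}$, the morphisms $(\id{} \tens U) \circ \eta$ and $\eta$ out of $\obb{I}$ have the same image under $[-]$, hence agree on both coprojections of $\obb{I} = I \pcoprod I$ and so differ by a phase $u$ on $\obb{I}$, i.e.\ a scalar of $\plusI{\catC}$; bending the wire with the dual pair from (3) then gives $U = \id{\obb{A}} \cdot u$.
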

\begin{proof}
\ref{enum:pinit-pterm}. This is well-known; since $\ctb$ is self-dual it has a terminal object $1$, but since $0 \otimes (-)$ preserves products $1 \simeq 0 \otimes 1$, and also $0 \otimes 1 \simeq 0$ dually.

\ref{enum:distmonic}. The presence of zero arrows makes all coprojections split monic. Now for any phased coproduct $B \pcoprod C$, one may use the bijection on morphisms
\[
\scalebox{0.8}{\begin{tikzpicture}
	\begin{pgfonlayer}{nodelayer}
		\node [style=none] (0) at (-5.25, 1.5) {};
		\node [style=medium box] (1) at (-5.25, -0) {$f$  };
		\node [style=none] (2) at (-6, -1.5) {};
		\node [style=none] (3) at (-4.5, -1.5) {};
		\node [style=none] (4) at (-4.5, -0.25) {};
		\node [style=none] (5) at (-6, -0.25) {};
		\node [style=none] (6) at (-5.25, 0.5) {};
		\node [style=label] (7) at (-4.5, -2) {$B \pcoprod C$};
		\node [style=label] (8) at (-6.25, -2) {$A$};
		\node [style=label] (9) at (-5.25, 2) {$D$};
	\end{pgfonlayer}
	\begin{pgfonlayer}{edgelayer}
		\draw (5.center) to (2.center);
		\draw (4.center) to (3.center);
		\draw (6.center) to (0.center);
	\end{pgfonlayer}
\end{tikzpicture}}

\leftrightarrow
\
\scalebox{0.8}{\begin{tikzpicture}
	\begin{pgfonlayer}{nodelayer}
		\node [style=none] (0) at (-1, 1.5) {};
		\node [style=none] (1) at (-0.5, -1.25) {};
		\node [style=none] (2) at (-0.5, -1.25) {};
		\node [style=none] (3) at (0, -0.75) {};
		\node [style=none] (4) at (0, -0.75) {};
		\node [style=none] (5) at (-0.5, -1.25) {};
		\node [style=none] (6) at (-1, -0.75) {};
		\node [style=none] (7) at (0, -0.75) {};
		\node [style=none] (8) at (1.5, -0.25) {};
		\node [style=none] (9) at (1.5, -1.5) {};
		\node [style=none] (10) at (0, -0.25) {};
		\node [style=none] (11) at (0.75, 0.5) {};
		\node [style=medium box] (12) at (0.75, -0) {$f$  };
		\node [style=none] (13) at (0.75, 1.5) {};
		\node [style=label] (14) at (-1, 2) {$A$};
		\node [style=label] (15) at (0.75, 2) {$D$};
		\node [style=label] (16) at (1.5, -2) {$B \pcoprod C$};
	\end{pgfonlayer}
	\begin{pgfonlayer}{edgelayer}
		\draw [style=none, bend right=45, looseness=1.25] (1.center) to (3.center);
		\draw [style=none, bend left=45, looseness=1.25] (5.center) to (6.center);
		\draw (10.center) to (7.center);
		\draw (8.center) to (9.center);
		\draw (11.center) to (13.center);
		\draw [style={arrow=.5}] (0.center) to (6.center);
	\end{pgfonlayer}
\end{tikzpicture}}

\]
to see that $A \otimes (B \pcoprod C)$ forms a phased coproduct of $A \otimes B$ and $A \otimes C$ with every phase of the form $\id{A} \otimes U$ for a phase $U$ of $B \pcoprod C$, as required. 

\ref{enum:phccompclosed}. By Theorem~\ref{thm:constr_is_monoidal} $\plusI{\ctb}$ is now a monoidal category and the functor $[-]$ is strict monoidal. 
Let $\obb{A} = A \pcoprod I$ be an object of $\plusI{\ctb}$, and $A^*$ be dual to $A$ in $\ctb$ via the state $\tinycup$ and effect $\tinycap$. For any object $\obb{A^*} = A^* \pcoprod I$ and morphisms $\mathbf{\eta}$, $\mathbf{\epsilon}$ in $\plusI{\ctb}$ with $[\mathbf{\eta}] = \tinycup$ and $[\mathbf{\epsilon}] = \tinycap$ we have 
\[
\left[
\scalebox{0.8}{\begin{tikzpicture}
	\begin{pgfonlayer}{nodelayer}
		\node [style=wide point] (0) at (-1.5, -1) {$\eta$ };
		\node [style=none] (1) at (-0.75, 0.75) {};
		\node [style=none] (2) at (-2.25, 0.5) {};
		\node [style=none] (3) at (-0.75, -0.75) {};
		\node [style=none] (4) at (-2.25, -0.75) {};
		\node [style=wide copoint] (5) at (-3, 0.75) {$\epsilon$};
		\node [style=none] (6) at (-3.75, 0.5) {};
		\node [style=none] (7) at (-3.75, -1) {};
		\node [style=label] (8) at (-3.75, -1.5) {$\obb{A}$};
		\node [style=label] (9) at (-0.75, 1.25) {$\obb{A}$};
	\end{pgfonlayer}
	\begin{pgfonlayer}{edgelayer}
		\draw (3.center) to (1.center);
		\draw (4.center) to (2.center);
		\draw [style=none] (6.center) to (7.center);
	\end{pgfonlayer}
\end{tikzpicture}}

\right]
=
\scalebox{0.8}{\begin{tikzpicture}
	\begin{pgfonlayer}{nodelayer}
		\node [style=none] (0) at (-3.75, 0.75) {};
		\node [style=none] (1) at (-3.75, -0.75) {};
		\node [style=label] (2) at (-3.75, -1.25) {$A$};
		\node [style=none] (3) at (-2.25, -0.5) {};
		\node [style=none] (4) at (-0.75, 1) {};
		\node [style=none] (5) at (-0.75, -0.5) {};
		\node [style=label] (6) at (-0.75, 1.5) {$A$};
		\node [style=none] (7) at (-2.25, 0.75) {};
		\node [style=none] (8) at (-3, 1.5) {};
		\node [style=none] (9) at (-1.5, -1.5) {};
		\node [style=none] (10) at (-2.25, -0.5) {};
		\node [style=none] (11) at (-2.25, -0.5) {};
		\node [style=none] (12) at (-0.75, -0.5) {};
		\node [style=none] (13) at (-0.75, -0.5) {};
		\node [style=none] (14) at (-1.5, -1.25) {};
		\node [style=none] (15) at (-2.25, -0.5) {};
		\node [style=none] (16) at (-1.5, -1.25) {};
		\node [style=none] (17) at (-3.75, 0.75) {};
		\node [style=none] (18) at (-3.75, 0.75) {};
		\node [style=none] (19) at (-2.25, 0.75) {};
		\node [style=none] (20) at (-2.25, 0.75) {};
		\node [style=none] (21) at (-3, 1.5) {};
		\node [style=none] (22) at (-3.75, 0.75) {};
		\node [style=none] (23) at (-3, 1.5) {};
	\end{pgfonlayer}
	\begin{pgfonlayer}{edgelayer}
		\draw [style=none, bend left=45, looseness=1.25] (14.center) to (11.center);
		\draw [style=none, bend right=45, looseness=1.25] (16.center) to (12.center);
		\draw [style=none, bend right=45, looseness=1.25] (21.center) to (18.center);
		\draw [style=none, bend left=45, looseness=1.25] (23.center) to (19.center);
		\draw [style={arrow=.5}] (7.center) to (3.center);
		\draw [style={arrow=.5}] (1.center) to (0.center);
		\draw [style={arrow=.5}] (5.center) to (4.center);
	\end{pgfonlayer}
\end{tikzpicture}}

=
\scalebox{0.8}{\begin{tikzpicture}
	\begin{pgfonlayer}{nodelayer}
		\node [style=none] (0) at (-0.75, -0.75) {};
		\node [style=label] (1) at (-0.75, -1.25) {$A$};
		\node [style=none] (2) at (-0.75, 1) {};
		\node [style=label] (3) at (-0.75, 1.5) {$A$};
	\end{pgfonlayer}
	\begin{pgfonlayer}{edgelayer}
		\draw [style=none] (0.center) to (2.center);
	\end{pgfonlayer}
\end{tikzpicture}}

\]
where the diagram inside $[-]$ is in $\plusI{\ctb}$. Similarly the other snake equation also holds. Then in $\plusI{\ctb}$ we have 
\[
\scalebox{0.8}{\begin{tikzpicture}
	\begin{pgfonlayer}{nodelayer}
		\node [style=wide point] (0) at (6.75, -0.75) {$\eta$ };
		\node [style=none] (1) at (6, 1) {};
		\node [style=none] (2) at (7.5, 0.75) {};
		\node [style=none] (3) at (6, -0.5) {};
		\node [style=none] (4) at (7.5, -0.5) {};
		\node [style=wide copoint] (5) at (8.25, 1) {$\epsilon$};
		\node [style=none] (6) at (9, 0.75) {};
		\node [style=none] (7) at (9, -0.75) {};
		\node [style=label] (8) at (9, -1.25) {$\obb{A^*}$};
		\node [style=label] (9) at (6, 1.5) {$\obb{A^*}$};
		\node [style=none] (10) at (10.25, -0) {$=$};
		\node [style=none] (11) at (11.5, -1) {};
		\node [style=none] (12) at (11.5, 1) {};
		\node [style=label] (13) at (11.5, 1.5) {$\obb{A^*}$};
		\node [style=label] (14) at (11.5, -1.5) {$\obb{A^*}$};
		\node [style=box] (15) at (11.5, -0) {$V$};
		\node [style=none] (16) at (1.25, 1) {};
		\node [style=none] (17) at (-1, 1) {};
		\node [style=label] (18) at (1.25, -1.5) {$\obb{A}$};
		\node [style=none] (19) at (-1, -0.5) {};
		\node [style=box] (20) at (1.25, -0) {$U$};
		\node [style=label] (21) at (-1, 1.5) {$\obb{A}$};
		\node [style=wide point] (22) at (-1.75, -0.75) {$\eta$ };
		\node [style=none] (23) at (1.25, -1) {};
		\node [style=none] (24) at (-2.5, -0.5) {};
		\node [style=label] (25) at (-4, -1.25) {$\obb{A}$};
		\node [style=wide copoint] (26) at (-3.25, 1) {$\epsilon$};
		\node [style=none] (27) at (-4, -0.75) {};
		\node [style=label] (28) at (1.25, 1.5) {$\obb{A}$};
		\node [style=none] (29) at (-2.5, 0.75) {};
		\node [style=none] (30) at (-4, 0.75) {};
		\node [style=none] (31) at (0, -0) {$=$};
	\end{pgfonlayer}
	\begin{pgfonlayer}{edgelayer}
		\draw (3.center) to (1.center);
		\draw (4.center) to (2.center);
		\draw [style=none] (6.center) to (7.center);
		\draw [style=none] (12.center) to (11.center);
		\draw (19.center) to (17.center);
		\draw (24.center) to (29.center);
		\draw [style=none] (30.center) to (27.center);
		\draw [style=none] (16.center) to (23.center);
	\end{pgfonlayer}
\end{tikzpicture}}

\]
for some phases $U,V$ in $\ctb$. Since $U$ and $V$ are invertible, setting  
\[
\scalebox{0.8}{\begin{tikzpicture}
	\begin{pgfonlayer}{nodelayer}
		\node [style=none] (0) at (0.25, 0.75) {};
		\node [style=none] (1) at (0.25, -1) {};
		\node [style=wide copoint] (2) at (1, 1) {$\epsilon$};
		\node [style=none] (3) at (1.75, 0.75) {};
		\node [style=none] (4) at (1.75, -1) {};
		\node [style=label] (5) at (1.75, -1.5) {$\obb{A^*}$};
		\node [style=label] (6) at (-2.5, -1.5) {$\obb{A^*}$};
		\node [style=box] (7) at (0.25, -0.25) {$U^{-1}$};
		\node [style=label] (8) at (0.25, -1.5) {$\obb{A}$};
		\node [style=none] (9) at (-2.5, -1) {};
		\node [style=label] (10) at (-4, -1.5) {$\obb{A}$};
		\node [style=wide copoint] (11) at (-3.25, 1) {$\epsilon'$};
		\node [style=none] (12) at (-4, -1) {};
		\node [style=none] (13) at (-2.5, 0.75) {};
		\node [style=none] (14) at (-4, 0.75) {};
		\node [style=none] (15) at (-1.25, -0) {$:=$};
	\end{pgfonlayer}
	\begin{pgfonlayer}{edgelayer}
		\draw (1.center) to (0.center);
		\draw [style=none] (3.center) to (4.center);
		\draw (9.center) to (13.center);
		\draw [style=none] (14.center) to (12.center);
	\end{pgfonlayer}
\end{tikzpicture}}

\]
one may check that $\eta$ and $\epsilon'$ form a dual pair in $\plusI{\ctb}$. 

\ref{enum:pharescalars}. Let $U \colon \obb{A} \to \obb{A}$ be a phase in $\ctb$. In $\plusI{\ctb}$ we have 
\[
\left[
\scalebox{0.8}{\begin{tikzpicture}
	\begin{pgfonlayer}{nodelayer}
		\node [style=none] (0) at (-3.75, 1.5) {};
		\node [style=none] (1) at (-3.75, -0.5) {};
		\node [style=box] (2) at (-3.75, 0.5) {$U$};
		\node [style=wide point] (3) at (-4.5, -0.75) {$\eta$ };
		\node [style=none] (4) at (-5.25, -0.5) {};
		\node [style=none] (5) at (-5.25, 1.5) {};
	\end{pgfonlayer}
	\begin{pgfonlayer}{edgelayer}
		\draw (1.center) to (0.center);
		\draw (4.center) to (5.center);
	\end{pgfonlayer}
\end{tikzpicture}}

\right]
=
\left[
\scalebox{0.8}{\begin{tikzpicture}
	\begin{pgfonlayer}{nodelayer}
		\node [style=none] (0) at (0, 1.5) {};
		\node [style=none] (1) at (0, -0.5) {};
		\node [style=none] (2) at (-1.5, -0.5) {};
		\node [style=wide point] (3) at (-0.75, -0.75) {$\eta$ };
		\node [style=none] (4) at (-1.5, 1.5) {};
	\end{pgfonlayer}
	\begin{pgfonlayer}{edgelayer}
		\draw (1.center) to (0.center);
		\draw (2.center) to (4.center);
	\end{pgfonlayer}
\end{tikzpicture}}

\right]
\qquad
\text{ so that }
\qquad
\scalebox{0.8}{\begin{tikzpicture}
	\begin{pgfonlayer}{nodelayer}
		\node [style=none] (0) at (14.75, 1.5) {};
		\node [style=none] (1) at (12.5, -0.5) {};
		\node [style=wide point] (2) at (11.75, -0.75) {$\eta$ };
		\node [style=none] (3) at (13.75, -0) {$=$};
		\node [style=none] (4) at (11, -0.5) {};
		\node [style=none] (5) at (11, 1.5) {};
		\node [style=scalar] (6) at (15.5, -0.75) {$u$};
		\node [style=none] (7) at (14.75, 0.75) {};
		\node [style=wide point] (8) at (15.5, 0.5) {$\eta$ };
		\node [style=none] (9) at (16.25, 0.75) {};
		\node [style=none] (10) at (12.5, 1.5) {};
		\node [style=none] (11) at (16.25, 1.5) {};
		\node [style=box] (12) at (12.5, 0.5) {$U$};
	\end{pgfonlayer}
	\begin{pgfonlayer}{edgelayer}
		\draw (1.center) to (10.center);
		\draw (4.center) to (5.center);
		\draw (9.center) to (11.center);
		\draw (7.center) to (0.center);
	\end{pgfonlayer}
\end{tikzpicture}}

\]
for some global phase $u$. But then $U = \id{\obb{A}} \cdot u$ since 
\[
\scalebox{0.8}{\begin{tikzpicture}
	\begin{pgfonlayer}{nodelayer}
		\node [style=scalar] (0) at (8.5, 0.75) {$u$};
		\node [style=none] (1) at (-1, 1.75) {};
		\node [style=none] (2) at (-1, -0.5) {};
		\node [style=box] (3) at (-1, 0.75) {$U$};
		\node [style=wide point] (4) at (-1.75, -0.75) {$\eta$ };
		\node [style=none] (5) at (-2.5, -0.5) {};
		\node [style=none] (6) at (-2.5, 1.5) {};
		\node [style=none] (7) at (6.75, 0.75) {$=$};
		\node [style=none] (8) at (7.75, -0.5) {};
		\node [style=none] (9) at (7.75, 2) {};
		\node [style=none] (10) at (6.75, 0.25) {};
		\node [style=box] (11) at (-7, 0.75) {$U$};
		\node [style=none] (12) at (-7, 2) {};
		\node [style=none] (13) at (-7, -0.5) {};
		\node [style=none] (14) at (-5.5, 0.75) {$=$};
		\node [style=none] (15) at (-4, 1.5) {};
		\node [style=none] (16) at (-2.5, 1.5) {};
		\node [style=wide copoint] (17) at (-3.25, 1.75) {$\epsilon'$ };
		\node [style=none] (18) at (-4, -0.5) {};
		\node [style=none] (19) at (4.5, 1.75) {};
		\node [style=none] (20) at (3, 1.5) {};
		\node [style=none] (21) at (4.5, -0.5) {};
		\node [style=wide copoint] (22) at (2.25, 1.75) {$\epsilon'$ };
		\node [style=wide point] (23) at (3.75, -0.75) {$\eta$ };
		\node [style=none] (24) at (1.5, 1.5) {};
		\node [style=none] (25) at (3, -0.5) {};
		\node [style=none] (26) at (1.5, -0.5) {};
		\node [style=none] (27) at (3, 1.5) {};
		\node [style=none] (28) at (0.25, 0.75) {$=$};
		\node [style=scalar] (29) at (5.25, 0.75) {$u$};
	\end{pgfonlayer}
	\begin{pgfonlayer}{edgelayer}
		\draw (2.center) to (1.center);
		\draw (5.center) to (6.center);
		\draw (8.center) to (9.center);
		\draw (13.center) to (12.center);
		\draw (15.center) to (18.center);
		\draw (21.center) to (19.center);
		\draw (25.center) to (20.center);
		\draw (24.center) to (26.center);
	\end{pgfonlayer}
\end{tikzpicture}}

\]
\end{proof}

\begin{corollary}
Let $\catC$ be a compact closed category with finite phased coproducts with transitive phases. Then $\catC$ has finite phased biproducts.
\end{corollary}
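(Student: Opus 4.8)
The plan is to pass to $\plusI{\catC}$, use its compact closure to equip it with biproducts, and then transfer the conclusion back along the equivalence $\catC \simeq \plusI{\catC}_\quotP$.

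First I would verify that the machinery of Sections~\ref{sec:monoidal}--\ref{sec:phbiprod} applies to $\catC$. By Lemma~\ref{lem:comp-closed}\eqref{enum:distmonic} the phased coproducts of $\catC$ are distributive and monic, so together with the assumed transitivity of phases $\catC$ is exactly a monoidal category of the type treated in Corollary~\ref{cor:phcoprodcorrespon}. Hence there is a monoidal equivalence $\catC \simeq \catD_\quotP$, where $\catD := \plusI{\catC}$ has, by Theorem~\ref{thm:getmoncoprod} and Lemma~\ref{lem:phasesAreScalars}, distributive monic finite coproducts together with a choice of global phases $\mathbb{P}$; moreover $\catD$ is compact closed by Lemma~\ref{lem:comp-closed}\eqref{enum:phccompclosed}.

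The crux is then the purely categorical fact that \emph{a compact closed category with finite coproducts has finite biproducts}. By self-duality $\catD$ also has finite products, and it is a theorem of Houston that finite products in a compact closed category are automatically biproducts; in outline, one observes (as in Lemma~\ref{lem:comp-closed}\eqref{enum:pinit-pterm}) that the initial object of $\catD$ is already a zero object, and then uses the closed structure to present $\obb{A}+\obb{B}$ as a product of $\obb{A}$ and $\obb{B}$. Either way $\catD$ acquires finite biproducts, and these are distributive since $\obb{A}\tens(-)$, having the right adjoint $\obb{A}^*\tens(-)$, preserves the coproducts that underlie them.

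Finally I would apply the converse direction of Corollary~\ref{cor:biproducts}: $\catD$ is now a monoidal category with finite distributive biproducts and a choice of global phases $\mathbb{P}$, so $\catD_\quotP$ has finite distributive phased biproducts, and transporting this along $\catC \simeq \catD_\quotP$ shows $\catC$ has finite distributive phased biproducts, in particular finite phased biproducts. The main obstacle is the ``compact closed plus finite coproducts implies biproducts'' step; everything else is bookkeeping with correspondences already in place. If one prefers a self-contained argument to citing Houston, the real work is to check the biproduct equations for the product structure one extracts on each $\obb{A}+\obb{B}$, using crucially that $\catD$ has a zero object.
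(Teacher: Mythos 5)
Your proposal is correct and follows essentially the same route as the paper: pass to $\plusI{\catC}$, which by Theorem~\ref{thm:getmoncoprod} and Lemma~\ref{lem:comp-closed} is compact closed with distributive coproducts, invoke Houston's theorem that compact closure plus finite coproducts yields biproducts, and transfer back to $\catC \simeq \plusI{\catC}_\quotP$ via Corollary~\ref{cor:biproducts}. The extra care you take over monicity, distributivity of the resulting biproducts, and the hypotheses of Corollary~\ref{cor:phcoprodcorrespon} is sound bookkeeping that the paper leaves implicit.
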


\begin{proof}
By Theorem~\ref{thm:getmoncoprod} and Lemma~\ref{lem:comp-closed}, $\plusI{\catC}$ is compact closed with distributive coproducts. But any compact closed category with finite coproducts has biproducts~\cite{houston2008finite}. Hence so does $\catC \simeq \plusI{\catC}_\quotP$ by Corollary~\ref{cor:biproducts}. 
\end{proof}

\begin{example}
$\FVecP$ and $\FHilbP$ each have finite phased biproducts. 
\end{example}

We leave open the question of whether compact closure automatically ensures that phases are transitive.

\section{Dagger Categories} \label{sec:daggers}

Our motivating examples $\Hilb$ and $\HilbP$ come with extra structure allowing us to identify global phases in the former category intrinsically, namely as those $z \in \mathbb{C}$ with $z^\dagger \cdot z = 1$. Recall that a \emph{dagger category}~\cite{selinger2008idempotents} is a category $\catC$ equipped with an involutive, identity-on-objects functor $(-)^\dagger \colon \catC^{\op} \to \catC$. 
In a dagger category, an \emph{isometry} is a morphism $f$ with $f^{\dagger} \circ f = \id{}$ and a \emph{unitary} further satisfies $f \circ f^{\dagger} = \id{}$~\cite[p.117]{selinger2008idempotents}. 
A \emph{dagger (braided, symmetric) monoidal} category is one with a dagger for which all coherence isomorphisms are unitary and $(f \otimes g)^{\dagger} = f^{\dagger} \otimes g^{\dagger}$ for all $f, g$.

\begin{definition} \label{def:daggerphbiprod}
In any dagger category with zero morphisms, a \emph{phased dagger biproduct} is a phased biproduct $A_1 \pbiprod \dots \pbiprod A_n$ for which $\pproj_i = \coproj_i^{\dagger}$ for all $i=1, \dots, n$.
\end{definition}

\noindent
A \emph{dagger biproduct}~\cite{selinger2008idempotents,vicary2011categorical} is then a phased dagger biproduct whose only phase is the identity. 

\begin{lemma}
Let $\catC$ be a dagger category with zero morphisms. Specifying a phased dagger biproduct $A \pbiprod B$ is equivalent to specifying a phased coproduct for which:
\begin{itemize}
\item 
$\pcoproj_A$ and $\pcoproj_B$ are isometries with $\pcoproj_A^\dagger \circ \pcoproj_B = 0$;
\item 
whenever $U$ is a phase so is $U^{\dagger}$.
\end{itemize}
\end{lemma}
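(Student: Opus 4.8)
The plan is to prove both directions by transposing the structure under the dagger. First I would record two facts used throughout. (i) Zero arrows, when they exist, are unique, so $0^{\dagger}=0$: indeed $0'_{A,B}:=(0_{B,A})^{\dagger}$ is readily checked to be another family of zero arrows, hence equals $0$. (ii) A phased coproduct structure $(\pcoproj_A,\pcoproj_B)$ on $A\pbiprod B$ and the assignment $\pproj_i:=\pcoproj_i^{\dagger}$ determine one another, so the two translations in the statement will be mutually inverse, and it suffices to check that each produces an object of the required kind.

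For the forward direction, suppose $A\pbiprod B$ is a phased dagger biproduct, so $\pproj_i=\pcoproj_i^{\dagger}$. The biproduct equations give $\pcoproj_A^{\dagger}\circ\pcoproj_A=\pproj_A\circ\pcoproj_A=\id{A}$, and likewise for $B$, so the $\pcoproj_i$ are isometries, while $\pcoproj_A^{\dagger}\circ\pcoproj_B=\pproj_A\circ\pcoproj_B=0$. For the phase condition I would use that a phased biproduct has, by definition, the same phases as a coproduct and as a product: if $U\circ\pcoproj_i=\pcoproj_i$ for all $i$ then also $\pproj_i\circ U=\pproj_i$, and applying $(-)^{\dagger}$ together with $\pproj_i^{\dagger}=\pcoproj_i$ yields $U^{\dagger}\circ\pcoproj_i=\pcoproj_i$, so $U^{\dagger}$ is again a phase.

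For the converse, let $(A\pbiprod B,\pcoproj_A,\pcoproj_B)$ be a phased coproduct with the two listed properties, and put $\pproj_i:=\pcoproj_i^{\dagger}$. The isometry hypothesis gives $\pproj_A\circ\pcoproj_A=\id{A}$ and $\pproj_B\circ\pcoproj_B=\id{B}$; orthogonality gives $\pproj_A\circ\pcoproj_B=0$; and daggering it, with $0^{\dagger}=0$, gives $\pproj_B\circ\pcoproj_A=(\pcoproj_A^{\dagger}\circ\pcoproj_B)^{\dagger}=0$. Next I would show $(\pproj_A,\pproj_B)$ is a phased product by daggering the two clauses of Definition~\ref{def:ph_coprod}: applying the existence clause to $f^{\dagger}\colon A\to C$ and $g^{\dagger}\colon B\to C$ and daggering the mediating map gives the existence clause for products; and given $k,k'$ with $\pproj_i\circ k=\pproj_i\circ k'$ for all $i$, daggering reduces to the coproduct uniqueness clause, producing a phase $U$ with $k'^{\dagger}=k^{\dagger}\circ U$, hence $k'=U^{\dagger}\circ k$ with $\pproj_i\circ U^{\dagger}=(U\circ\pcoproj_i)^{\dagger}=\pproj_i$, so $U^{\dagger}$ preserves the projections. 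Finally, the coproduct and product structures must have the same phases: $\pproj_i\circ U=\pproj_i$ for all $i$ is, after daggering, the statement that $U^{\dagger}\circ\pcoproj_i=\pcoproj_i$ for all $i$, i.e.\ that $U^{\dagger}$ is a phase of the coproduct, which by the assumed closure of phases under $(-)^{\dagger}$ holds precisely when $U$ is.

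I do not anticipate a genuine obstacle: every step is a formal transpose of the phased-coproduct data, and the main bookkeeping is keeping track of $\pproj_i^{\dagger}=\pcoproj_i$ and $0^{\dagger}=0$ when daggering the biproduct and zero equations. The one place where the second listed property (closure of phases under the dagger) is really needed — rather than mere dagger-functoriality — is the very last point, matching the phases of the induced phased product to those of the phased coproduct; that is where I would be most careful.
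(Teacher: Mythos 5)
Your proposal is correct and follows essentially the same route as the paper's (much terser) proof: the dagger exchanges phased coproducts and phased products, the first bullet restates the biproduct equations via $\pproj_i := \pcoproj_i^{\dagger}$, and closure of phases under $(-)^{\dagger}$ is exactly the condition that the coprojections and projections share the same phases. Your extra bookkeeping (uniqueness of zero families giving $0^{\dagger}=0$, and the mutual inverseness of the two translations) is sound and just makes explicit what the paper leaves implicit.
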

\begin{proof}
The dagger sends phased coproducts to phased products and vice versa. 
The first point is a restatement of the equations of a biproduct, while the second is equivalent to the projections and coprojections then having the same phases.
\end{proof}

\begin{lemma} 
A dagger category has finite phased dagger biproducts iff it has a zero object and binary phased dagger biproducts.
\end{lemma}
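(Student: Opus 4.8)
The plan is to mirror the proof of Lemma~\ref{lem:ph_biprod}\eqref{enum:hasfinite}, reducing the claim to that result and then adding a short check of the dagger condition. The left-to-right implication is immediate: if $\catC$ has phased dagger biproducts of all finite families, then it has in particular the empty one, which is a zero object, and all binary ones.

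For the converse, suppose $\catC$ has a zero object and binary phased dagger biproducts, and fix a finite family $A_1, \dots, A_n$. I would dispose of the degenerate cases first: for $n = 0$ the zero object is an empty phased dagger biproduct, the requirement $\pproj_i = \pcoproj_i^{\dagger}$ being vacuous; for $n = 1$ the object $A_1$ with identity (co)projections works, since $\id{A_1}^{\dagger} = \id{A_1}$. For $n \geq 2$, build the iterated object $\obb{B} := ((A_1 \pbiprod A_2) \pbiprod \dots) \pbiprod A_n$ using a chosen binary phased dagger biproduct at each stage. Since every phased dagger biproduct is in particular a phased biproduct, Lemma~\ref{lem:ph_biprod}\eqref{enum:hasfinite} already tells us that $\obb{B}$, equipped with its canonical coprojections and projections (described there as iterated composites), is a phased biproduct of $A_1, \dots, A_n$; in particular the phases of its coproduct and product structures coincide.

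It then only remains, by Definition~\ref{def:daggerphbiprod}, to verify that these canonical projections are the daggers of the canonical coprojections, which I would do by induction on $n$ following the recursive description of the (co)projections in the proof of Lemma~\ref{lem:ph_biprod}\eqref{enum:hasfinite}. Writing $C$ for the iterated phased dagger biproduct of $A_1, \dots, A_{n-1}$ with coprojections $\mu_i$ and projections $\pi_i$, the inductive hypothesis gives $\pi_i = \mu_i^{\dagger}$; the coprojections of $\obb{B} = C \pbiprod A_n$ are then $\pcoproj_C \circ \mu_i$ for $i < n$ together with $\pcoproj_{A_n}$, and its projections are $\pi_i \circ \pproj_C$ together with $\pproj_{A_n}$. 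Using that $C \pbiprod A_n$ is a phased dagger biproduct, so $\pproj_C = \pcoproj_C^{\dagger}$ and $\pproj_{A_n} = \pcoproj_{A_n}^{\dagger}$, and functoriality of $(-)^{\dagger}$, we get $\pi_i \circ \pproj_C = \mu_i^{\dagger} \circ \pcoproj_C^{\dagger} = (\pcoproj_C \circ \mu_i)^{\dagger}$ for $i < n$, and $\pproj_{A_n} = \pcoproj_{A_n}^{\dagger}$, as required. There is no genuine obstacle here; the only thing to get right is to lean on Lemma~\ref{lem:ph_biprod}\eqref{enum:hasfinite} for the phased-biproduct content rather than re-deriving it, so that all that is left is the one-line dagger computation above.
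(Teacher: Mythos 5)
Your proof is correct and follows essentially the same route as the paper: both iterate the binary construction, lean on Lemma~\ref{lem:ph_biprod}\eqref{enum:hasfinite} (via Proposition~\ref{prop:assoc}) for the phased-biproduct content of $((A_1 \pbiprod A_2) \pbiprod \dots) \pbiprod A_n$, and then observe that the composite (co)projections inherit the dagger condition from the binary ones. Your explicit inductive check that $\pproj_i = \pcoproj_i^{\dagger}$ simply spells out what the paper compresses into the remark that the composite coprojections remain isometries.
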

\begin{proof}
We have seen that $(A \pbiprod B) \pbiprod C$ forms a phased biproduct of $A, B, C$ with coprojections $\pcoproj_{A \pbiprod B} \circ \pcoproj_A$, $\pcoproj_{A \pbiprod B} \circ \pcoproj_B$ and $\pcoproj_C$. But these are isometries whenever all of the $\pcoproj$ are. 
Similarly, we obtain phased dagger biproducts $A_1 \pbiprod \dots \pbiprod A_n$.
\end{proof}

Our motivating source of examples is the following.

\begin{lemma} \label{lem:dagger_ph_biprod_main}
Let $\catC$ be a dagger category with dagger biproducts and a choice of trivial isomorphisms which is transitive and closed under the dagger. Then $\quot{\catC}{\sim}$ is a dagger category with finite phased dagger biproducts.
\end{lemma}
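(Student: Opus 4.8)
The plan is to derive everything from results already established. First I would verify that the congruence $\sim$ descends through the dagger. Suppose $f \sim g$ for $f, g \colon A \to B$, say $f = g \circ p$ with $p \in \TrI_A$. Then $f^\dagger = p^\dagger \circ g^\dagger$, and $p^\dagger \in \TrI_A$ since the trivial isomorphisms are closed under the dagger; applying the basic compatibility axiom of a choice of trivial isomorphisms to $g^\dagger \colon B \to A$ and $p^\dagger \in \TrI_A$ yields some $q \in \TrI_B$ with $p^\dagger \circ g^\dagger = g^\dagger \circ q$, whence $f^\dagger \sim g^\dagger$. Thus $[f]_\tc \mapsto [f^\dagger]_\tc$ is well-defined, and since $(-)^\dagger$ on $\catC$ is an involutive, identity-on-objects, contravariant functor, so is the induced operation on $\quot{\catC}{\sim}$; hence $\quot{\catC}{\sim}$ is a dagger category.

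Next, $\catC$ has finite dagger biproducts and hence finite coproducts, and its chosen trivial isomorphisms are transitive, so the converse part of Lemma~\ref{lem:getBiprod} gives that $\quot{\catC}{\sim}$ has finite phased biproducts; as in the proof of that lemma, the phased biproduct structure on $A \pbiprod B$ is the image under $[-]_\tc$ of the biproduct structure in $\catC$, the zero arrows of $\catC$ descend to zero arrows, and the zero object of $\catC$ remains a zero object, so all finite cases (including the empty one) are covered. Distributivity passes to $\quot{\catC}{\sim}$ exactly as in Lemma~\ref{lem:globaltolocalmonoidal}, although it is not needed for the stated conclusion.

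It then remains only to check the dagger condition $\pproj_i = \pcoproj_i^{\dagger}$ of Definition~\ref{def:daggerphbiprod}. In a dagger biproduct in $\catC$ this holds on the nose, so in $\quot{\catC}{\sim}$ we get $[\pproj_i]_\tc = [\pcoproj_i^\dagger]_\tc = [\pcoproj_i]_\tc^\dagger$ directly from the definition of the quotient dagger. Equivalently, one may use the characterisation preceding this lemma: the $[\pcoproj_i]_\tc$ are isometries with $[\pcoproj_i]_\tc^\dagger \circ [\pcoproj_j]_\tc = 0$ for $i \neq j$, since these equations are preserved by $[-]_\tc$, and any phase of $A \pbiprod B$ in the quotient is the class of a morphism of the form $s + t$ with $s \in \TrI_A$ and $t \in \TrI_B$, whose dagger $s^\dagger + t^\dagger$ is again of that form by closure under the dagger, hence again a phase.

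Nearly all of this is routine unwinding of definitions; the one step that needs genuine attention is the first, where the descent of the dagger to the quotient uses \emph{both} the closure of the trivial isomorphisms under the dagger and their basic compatibility axiom together. I do not anticipate any obstacle beyond this.
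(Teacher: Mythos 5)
Your proposal is correct and is essentially the argument the paper intends: the paper states this lemma without proof (as a ``straightforward extension'' of its quotient constructions), and your route --- descending the dagger to $\quot{\catC}{\sim}$ by combining closure of the trivial isomorphisms under $(-)^\dagger$ with the defining compatibility axiom, obtaining finite phased biproducts from the converse part of Lemma~\ref{lem:getBiprod}, and noting that the equations $\pproj_i = \pcoproj_i^\dagger$ of a dagger biproduct in $\catC$ are preserved by $[-]_\tc$ --- is exactly what that omitted proof would be. In particular, your identification of the phases in the quotient with classes of morphisms $s + t$ with $s \in \TrI_A$, $t \in \TrI_B$ (closed under the dagger) matches the paper's own proof of Lemma~\ref{lem:getBiprod}, so there is no gap.
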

\begin{proof}
This follows easily from Lemma~\ref{lem:getBiprod}, noting that thanks to our assumptions whenever $f \sim g$ then $f^\dagger \sim g^\dagger$ also, so that $\quot{\catC}{\sim}$ is indeed a dagger category.
\end{proof}

\begin{example}
$\Hilb$ is a dagger category with dagger biproducts, with the global phases $e^{i \theta}$ being precisely its unitary scalars. Thanks to this $\HilbP$ has finite phased dagger biproducts. 
\end{example}

We now desire versions of our results on $\plusI{-}$ for dagger categories. However, a problem arises from the fact that the canonical (non-unique) isomorphisms from Lemma~\ref{lem:isoms} or distributivity~\eqref{eq:distrib-isomorphism} need not be unitary as canonical isomorphisms in a dagger category should be. 

\begin{example} \label{ex:wocoherentdagphases}
Let $S$ be a commutative semi-ring with an involution $s \mapsto s^{\dagger}$. Write $\MatS$ for the dagger symmetric monoidal category with natural numbers as objects, and morphisms $M \colon n \to m$ being $S$-valued matrices, with $M^{\dagger}_{i,j} := (M_{j,i})^{\dagger}$ and $M \otimes N$ the Kronecker product of matrices. Then $\MatS$ has distributive dagger biproducts $n \biprod m := n + m$. Take as global phases $\mathbb{P}$ all $u \in S$ which are unitary, i.e.~$u^{\dagger} \cdot u = 1$.

Now suppose that $S$ has a unitary element of the form $s^{\dagger} \cdot s \neq 1$ for some $s \in S$; for example we may take $S = \mathbb{C}$ but with trivial involution $z^{\dagger} := z$ for all $z \in \mathbb{C}$, and choose $s = -1 = i^{\dagger} \cdot i$. Then the morphism $(1,0) \colon 1 \to 2$ in $\MatS$, together with either $(0,1)$ or $(0,s)$ makes the object $2$ a phased dagger biproduct $1 \pbiprod 1$ in $(\MatS)_\quotP$. But the endomorphism of $2$ in $\MatS$ with matrix 
\[
\begin{pmatrix}
1 & 0 \\ 0 & s
\end{pmatrix}
\] is not unitary, and nor is its induced morphism in $(\MatS)_\quotP$.
\end{example}

We can remedy this with an extra assumption about phased dagger biproducts. In a dagger category a morphism $f \colon A \to A$ is called \emph{positive} when $f = g^{\dagger} \circ g$ for some $g \colon A \to B$.

\begin{definition} \label{def:posfree}
We say that a dagger category with finite phased dagger biproducts has \deff{positive-free phases} when any phase $U$ on $A \pbiprod B$ which is positive has $U= \id{A \pbiprod B}$. 
\end{definition}

Equivalently, any morphism $f \colon A \pbiprod B \to C$ for which $f \circ \pcoproj_A$ and $f \circ \pcoproj_B$ are isometries with $\pcoproj_B^{\dagger} \circ f^\dagger \circ f \circ \pcoproj_A = 0$ is itself an isometry. It follows that positive phases of any phased dagger biproduct $A_1 \pbiprod \dots \pbiprod A_n$ are also trivial. In particular all phases and canonical and distributivity isomorphisms between finite phased dagger biproducts are unitary. 

For a category $\catC$ with phased dagger biproducts and a chosen object $I$, we define the category $\plusIdag{\catC}$ just like $\plusI{\catC}$ but with objects being phased dagger biproducts $\obb{A} = A \pbiprod I$. 

\begin{lemma} \label{lem:DagBiprodPlusI}
Let $\ctb$ be a category with finite phased dagger biproducts with positive-free phases and a phase generator $I$. Then $\plusIdag{\ctb}$ is a dagger category with finite dagger biproducts, and $[-] \colon \plusIdag{\ctb} \to \ctb$ preserves daggers. 
\end{lemma}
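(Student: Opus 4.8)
The plan is to show that the dagger of $\catC$ restricts to one on $\plusI{\catC}$, with positive-freeness entering only at the point where certain coprojections and canonical isomorphisms need to be unitary rather than merely invertible (as Example~\ref{ex:wocoherentdagphases} shows they must be). For the setup: since $I$ is a phase generator, Lemma~\ref{lem:getBiprod} already gives $\plusI{\catC}$ finite biproducts and a zero object $\obb{0} = 0 \pbiprod I$, and I would take the objects of $\plusI{\catC}$ to be phased \emph{dagger} biproducts $\obb{A} = A \pbiprod I$, so that $\pproj_A = \pcoproj_A^{\dagger}$, $\pproj_I = \pcoproj_I^{\dagger}$, the coprojections are isometries, and $\pcoproj_A^{\dagger} \circ \pcoproj_I = 0$; by positive-freeness every phase of such an object, and every canonical isomorphism between two of them, is unitary, so this agrees up to equivalence with $\plusIdag{\catC}$.

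Next I would show that for every morphism $f \colon \obb{A} \to \obb{B}$ of $\plusI{\catC}$ the $\catC$-morphism $f^{\dagger} \colon \obb{B} \to \obb{A}$ again lies in $\plusI{\catC}$, with $[f^{\dagger}] = [f]^{\dagger}$, and then declare this to be the dagger of $\plusI{\catC}$. The verification is short: applying $(-)^{\dagger}$ to $f \circ \pcoproj_A = \pcoproj_B \circ [f]$ and to $f \circ \pcoproj_I = \pcoproj_I$, and using $\pproj_\bullet = \pcoproj_\bullet^{\dagger}$, gives $\pproj_A \circ f^{\dagger} = [f]^{\dagger} \circ \pproj_B$ and $\pproj_I \circ f^{\dagger} = \pproj_I$; hence $f^{\dagger} \circ \pcoproj_B$ and $\pcoproj_A \circ [f]^{\dagger}$ have equal composites with the projections $\pproj_A, \pproj_I$ of the phased product $\obb{A}$, so they differ by a phase $V$ of $\obb{A}$, and since phases fix coprojections ($V \circ \pcoproj_A = \pcoproj_A$) they coincide; likewise $f^{\dagger} \circ \pcoproj_I = \pcoproj_I$. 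Thus $f^{\dagger}$ is diagonal and preserves $\pcoproj_I$, so it is a morphism of $\plusI{\catC}$. Since composition and identities in $\plusI{\catC}$ are those of $\catC$, the assignment $f \mapsto f^{\dagger}$ is at once an involutive, identity-on-objects, contravariant functor, and $[f^{\dagger}] = [f]^{\dagger}$ says precisely that $[-]$ preserves daggers.

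It remains to check the biproducts are dagger biproducts, which is the step I expect to be the main obstacle and the only one really using positive-freeness. Take $\obb{A} + \obb{B} := (A \pbiprod B) \pbiprod I$ as in the proof of Theorem~\ref{thm:localToGlobal}, choosing $A \pbiprod B$ and $(A \pbiprod B) \pbiprod I$ to be phased dagger biproducts of $\catC$, with biproduct coprojections $\coproj_{\obb{A}}, \coproj_{\obb{B}}$ such that $[\coproj_{\obb{A}}] = \pcoproj_A$ and $[\coproj_{\obb{B}}] = \pcoproj_B$. By associativity (Proposition~\ref{prop:assoc}), $\coproj_{\obb{A}}$ carries the orthonormal pair $(\pcoproj_A, \pcoproj_I)$ of coprojections of $\obb{A}$ to an orthonormal pair of coprojections of $(A \pbiprod B) \pbiprod I$, so positive-freeness makes $\coproj_{\obb{A}}$ an isometry in $\catC$, and likewise $\coproj_{\obb{B}}$. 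Then $\coproj_{\obb{A}}^{\dagger} \circ \coproj_{\obb{A}} = \id{\obb{A}}$, while $[\coproj_{\obb{A}}^{\dagger} \circ \coproj_{\obb{B}}] = \pcoproj_A^{\dagger} \circ \pcoproj_B = 0$ so, as $[-]$ reflects zeroes, $\coproj_{\obb{A}}^{\dagger} \circ \coproj_{\obb{B}} = 0$, and symmetrically. Since $\plusI{\catC}$ is semiadditive, these four equations determine the biproduct projections uniquely, so $\pproj_{\obb{A}} = \coproj_{\obb{A}}^{\dagger}$ and $\pproj_{\obb{B}} = \coproj_{\obb{B}}^{\dagger}$; that is, $\obb{A} + \obb{B}$ is a dagger biproduct. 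The zero object $\obb{0}$ is trivially a dagger-zero object, and iterating the binary case yields all finite dagger biproducts; that $[-]$ preserves daggers was established above. Without positive-freeness one loses unitarity of these coprojections (and of the distributivity and associativity isomorphisms), so the dagger would no longer see the biproduct structure, which is exactly the phenomenon in Example~\ref{ex:wocoherentdagphases}.
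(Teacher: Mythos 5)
Your proposal is correct and follows essentially the same route as the paper: you show the dagger of $\catC$ restricts to the morphisms of $\plusIdag{\catC}$ (so that $[-]$ preserves daggers), then lift dagger biproducts from $\catC$ as in Lemma~\ref{lem:getBiprod} and use positive-freeness to make the lifted coprojections isometries, forcing the projections to be their daggers. You merely spell out details the paper leaves as ``one may check'' (the diagonality of $f^\dagger$, the orthogonality $\coproj_{\obb{A}}^\dagger \circ \coproj_{\obb{B}} = 0$, and uniqueness of the projections), and you invoke positive-freeness via its equivalent isometry formulation rather than via the positive phase $\coproj_{\obb{A}}^\dagger \circ \coproj_{\obb{A}}$, which is an inessential difference.
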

\begin{proof}
Let $\ctb$ be as above. One may check that any diagonal morphism $f \colon \obb{A} \to \obb{B}$ between phased dagger biproducts with $f \circ \pcoproj_i = \pcoproj_i \circ f_i$ has that $f^{\dagger} \colon \obb{B} \to \obb{A}$ is also diagonal with $f^{\dagger} \circ \pcoproj_i = \pcoproj_i \circ {f_i}^{\dagger}$. Hence $\plusIdag{\ctb}$ is a dagger category with the same dagger as $\ctb$, and $[-] \colon \ctb \to \plusIdag{\ctb}$ preserves daggers. 

Now any lifting $(\obb{A \pbiprod B}, \pcoproj_{\obb{A}}, \pcoproj_{\obb{B}})$ of a phased dagger biproduct in $\ctb$ is a biproduct in $\plusIdag{\ctb}$, just as in Lemma~\ref{lem:getBiprod}. Moreover each coprojection has that $[\pcoproj_{\obb{A}}^{\dagger} \circ \pcoproj_{\obb{A}}] = [\pcoproj_{\obb{A}}]^{\dagger} \circ [\pcoproj_{\obb{A}}] = \id{}$ and so  
$\pcoproj_{\obb{A}}^{\dagger} \circ \pcoproj_{\obb{A}}$ is a phase in $\ctb$, and hence by positive-freeness is the identity, making this a dagger biproduct.
\end{proof}

When $\ctb$ is a dagger monoidal category, in $\plusIdag{\ctb}$ we again set $\mathbb{P}$ to be the morphisms $\obb{I} \to \obb{I}$ in $\plusIdag{\ctb}$ which are phases in $\ctb$. We call a choice of global phases $\mathbb{P}$ on a dagger monoidal category \indef{positive-free} if whenever $p \cdot \id{A}$ is positive then it is equal to $\id{A}$, for any $p \in \mathbb{P}$ and object $A$.

\begin{corollary} \label{cor:daggerbiproducts}
There is a one-to-one correspondence, up to dagger monoidal equivalence, between dagger monoidal categories
\begin{itemize}
	\item $\ctb$ with distributive finite phased dagger biproducts with positive-free phases;
\item $\cta$ with distributive finite dagger biproducts and a positive-free choice of unitary global phases $\mathbb{P}$;
\end{itemize}
given by $\cta \mapsto \cta_{\quotP}$ and  $\ctb \mapsto \plusIdag{\ctb}$.
\end{corollary}
\begin{proof}
$\cta_\quotP$ has phased dagger biproducts by Lemma~\ref{lem:dagger_ph_biprod_main}, and from the description of phases in this category we see that they are positive-free iff $\mathbb{P}$ is positive-free in $\cta$. Conversely, for $\ctb$ as above apply Lemma~\ref{lem:DagBiprodPlusI} and Corollary~\ref{cor:biproducts}. Thanks to positive-freeness, every phase is a unitary and hence so are all elements of $\mathbb{P}$.

We define the monoidal structure on $\plusIdag{\ctb}$ just as on $\plusI{\ctb}$. By positive-freeness the morphisms $c_{\obb{A}, \obb{B}}$ are isometries, and this in turn ensures that $\plusIdag{\ctb}$ is dagger monoidal. To show this, we will use the observation that in any dagger category, if $i$ and $j$ are isometries and  the following commutes
\[
\begin{tikzcd}
A \rar{f} \dar[swap]{i} & B \dar{j} \rar{h} & A \dar{i} \\ 
C \rar[swap]{g} & D \rar[swap]{g^\dagger} & C  
\end{tikzcd}
\]
then $h = f^{\dagger}$, and whenever $g$ is unitary so is $f$. Applying this to the situation 
\[
\begin{tikzcd}
\obb{A} \tens \obb{B} \rar{f \tens g} \dar[swap]{c_{\obb{A}, \obb{B}}} & 
\obb{C} \tens \obb{D} \rar{f^{\dagger} \tens g^{\dagger}} \dar[swap]{c_{\obb{C}, \obb{D}}} & \obb{A} \tens \obb{B} \dar{c_{\obb{A}, \obb{B}}} \\ 
\obb{A} \otimes \obb{B} \rar[swap]{f \otimes g} & \obb{C} \otimes \obb{D} \rar[swap]{f^{\dagger} \otimes g^{\dagger}} & \obb{A} \otimes \obb{B} 
\end{tikzcd}
\]
using that $ f^{\dagger} \otimes g^{\dagger} = (f \otimes g)^{\dagger}$ we see that $ f^{\dagger} \tens g^{\dagger} = (f \tens g)^{\dagger}$ also. Similarly, applying this observation to the definition of $\aalpha$ shows that it is unitary. 

Now any morphism $\beta \colon \obb{I} \tens \obb{I} \to \obb{I}$ as in the proof of Theorem~\ref{thm:constr_is_monoidal} is unitary thanks to positive-freeness. 
The natural isomorphisms $\rrho$ in $\plusIdag{\ctb}$ satisfy $\rrho_{\obb{A}} \tens \id{\obb{I}} = (\id{\obb{A}} \tens \beta) \circ \aalpha_{\obb{A}, \obb{I}, \obb{I}}$. Since the latter is unitary, the dagger respects $\tens$, and the assignment $f \mapsto f \tens \id{\obb{I}}$ is injective, it follows that $\rrho_{\obb{A}}$ is unitary. Similarly, so is $\llambda_{\obb{A}}$.

Now since $[-]$ is dagger monoidal so is the equivalence $\ctb \simeq \plusIdag{\ctb}_\quotP$. Conversely, the equivalence $F \colon \cta \to \plusIdag{\cta_\quotP}$ preserves daggers by definition and is such that every object in $\plusIdag{\cta_\quotP}$ is unitarily isomorphic to $F(A)$, for some $A$, making it a dagger equivalence.
\end{proof}

It is also easy to see that whenever either of $\catC$ or $\catD$ is braided or symmetric dagger monoidal, so is the other and each of the above functors.

\begin{example} \label{example:Hilb-dag-construction}
$\Hilb$ is dagger symmetric monoidal with distributive finite dagger biproducts and its global phases $e^{i \theta}$ are positive-free. Hence there is a dagger monoidal equivalence 
\[
\Hilb \simeq \plusIdag{\HilbP}
\]
\end{example}

It follows from our next result that the phased biproducts in $\HilbP$ in fact satisfy a condition strengthening positive-freeness, which we now describe. Let us say that phased dagger biproducts have \indef{positive cancellation} when any positive diagonal endomorphisms $p, q$ of $A \pbiprod B$ with $p = q \circ U$ for some phase $U$ have $p = q$. 

\begin{lemma} \label{lem:pos-cancellation}
Let $\ctb$ be a dagger monoidal category with distributive finite phased dagger biproducts with positive-free phases. Then  positive cancellation holds in $\ctb$ iff in $\plusIdag{\ctb}$ we have
\begin{equation} \label{eq:pos-condition}
[p] = [q] \implies p = q
\end{equation}
for all positive morphisms $p, q$.
\end{lemma}

\begin{proof}
Let $p, q$ be positive in $\plusIdag{\ctb}$ with $[p] = [q]$. Then $p = q \circ U$ for some phase $U$, and so when positive cancellation holds we have $p = q$.

 Conversely, suppose $\plusIdag{\ctb}$ satisfies the above and that $p, q$ are positive diagonal endomorphisms of $A \pbiprod B$ in $\plusIdag{\ctb}$ with $[p] = [q] \circ [U]$ for some phase $[U]$ in $\ctb$. Then in $\plusIdag{\ctb}$ we have $[\pproj_A \circ p \circ \pcoproj_A] = [\pproj_A \circ q \circ \pcoproj_A]$ and so $\pproj_A \circ p \circ \pcoproj_A = \pproj_A \circ q \circ \pcoproj_A$, and similarly for $B$, giving $p = q$. Hence $\ctb$ has positive cancellation.
\end{proof}

\begin{example}
$\Hilb$ satisfies the condition \eqref{eq:pos-condition}. Indeed let $p, q$ be positive linear maps with $p = e^{i \cdot \theta} \circ q$. Then since $p = p^\dagger$, subtracting $p$ gives that either $p=q=0$ or $e^{i \cdot \theta} = \pm 1$. But any positive maps with $p + q = 0$ have $p = q = 0$ also. 
\end{example}

\subsection*{Dagger Compactness}

We now combine daggers with compact closure. Recall that a \emph{dagger dual} for an object $A$ in a dagger symmetric monoidal category is a dual $(A^*, \eta, \epsilon)$ with $\epsilon = \eta^{\dagger} \circ \sigma$. A \emph{dagger compact} category is a dagger symmetric one in which every object has a dagger dual. 

Although compactness of $\catC$ ensures compactness of $\plusIdag{\catC}$, to establish dagger compactness we make an extra assumption; it is an open question whether this is necessary. 

\begin{proposition}
Suppose that $\catC$ is dagger compact with phased dagger biproducts which are positive-free, and that in $\catC$ every object $A$ has a morphism $\psi \colon I \to A$ with 
\[
\scalebox{0.8}{\begin{tikzpicture}
	\begin{pgfonlayer}{nodelayer}
		\node [style=none] (0) at (0, 2.25) {};
		\node [style=none] (1) at (0, -0.25) {};
		\node [style=none] (2) at (1.25, 1) {$=$};
		\node [style=point] (3) at (-1, 0.25) {$\psi$};
		\node [style=copoint] (4) at (-1, 1.75) {$\psi^{\dagger}$};
		\node [style=none] (5) at (-1, 0.75) {};
		\node [style=none] (6) at (-1, 1.5) {};
		\node [style=none] (7) at (2.25, 2.25) {};
		\node [style=none] (8) at (2.25, -0.25) {};
		\node [style=label] (9) at (0, 2.75) {$A$};
		\node [style=label] (10) at (2.25, 2.75) {$A$};
		\node [style=label] (11) at (2.25, -0.75) {$A$};
		\node [style=label] (12) at (0, -0.75) {$A$};
		\node [style=label] (13) at (-1.5, 1) {$A$};
	\end{pgfonlayer}
	\begin{pgfonlayer}{edgelayer}
		\draw (1.center) to (0.center);
		\draw (6.center) to (3);
		\draw (8.center) to (7.center);
	\end{pgfonlayer}
\end{tikzpicture}}

\]
Then $\plusIdag{\catC}$ is dagger compact.
\end{proposition}

\begin{proof}
In $\ctb$, let $A$ and $A^*$ be dagger dual objects via the state $\tinycup$.  Let $\psi \colon I \to A$ be as above, and let $\phi \colon \obb{I} \to \obb{A}$
and $\eta \colon \obb{I} \to \obb{A^*} \tens \obb{A}$ in $\plusIdag{\ctb}$ with $[\phi] = \psi$ and $[\eta] = \tinycup$. Then applying $[-]$ we see that in $\plusIdag{\ctb}$ we have 
\[
\scalebox{0.8}{\begin{tikzpicture}
	\begin{pgfonlayer}{nodelayer}
		\node [style=label] (0) at (4.75, 3) {$\obb{A}$};
		\node [style=label] (1) at (4.75, -0.5) {$\obb{A}$};
		\node [style=none] (2) at (4.75, 2.5) {};
		\node [style=none] (3) at (4.75, -0) {};
		\node [style=none] (4) at (3.5, 1.25) {$=$};
		\node [style=none] (5) at (2.25, 0.25) {};
		\node [style=none] (6) at (0.75, 2.25) {};
		\node [style=none] (7) at (0.75, 2.25) {};
		\node [style=none] (8) at (-0.75, -0) {};
		\node [style=none] (9) at (-0.75, 2.25) {};
		\node [style=wide point] (10) at (1.5, -0) {$\bar{\eta}$ };
		\node [style=none] (11) at (0.75, 0.25) {};
		\node [style=wide copoint] (12) at (0, 2.5) {$\bar{\eta}^{\dagger}$ };
		\node [style=none] (13) at (2.25, 2.5) {};
		\node [style=label] (14) at (-0.75, -0.5) {$\obb{A}$};
		\node [style=label] (15) at (2.25, 3) {$\obb{A}$};
		\node [style=scalar] (16) at (5.5, 1.25) {$u$};
		\node [style=none] (17) at (-0.75, 0.75) {};
		\node [style=none] (18) at (0, 1.5) {};
		\node [style=none] (19) at (0, 1.5) {};
		\node [style=none] (20) at (0.75, 2.25) {};
		\node [style=none] (21) at (0.75, 0.75) {};
		\node [style=none] (22) at (0, 1.5) {};
		\node [style=none] (23) at (0, 1.5) {};
		\node [style=none] (24) at (-0.75, 2.25) {};
	\end{pgfonlayer}
	\begin{pgfonlayer}{edgelayer}
		\draw (3.center) to (2.center);
		\draw (5.center) to (13.center);
		\draw [bend left, looseness=1.00] (17.center) to (18.center);
		\draw [bend left, looseness=1.00] (20.center) to (19.center);
		\draw [bend right, looseness=1.00] (21.center) to (22.center);
		\draw [bend right, looseness=1.00] (24.center) to (23.center);
		\draw (17.center) to (8.center);
		\draw (21.center) to (11.center);
	\end{pgfonlayer}
\end{tikzpicture}}

\]
for some $u \in \mathbb{P}$ and 
\[
\left[
\scalebox{0.8}{\begin{tikzpicture}
	\begin{pgfonlayer}{nodelayer}
		\node [style=none] (0) at (0, 1.25) {};
		\node [style=none] (1) at (0, -1.25) {};
		\node [style=point] (2) at (-1, -0.75) {$\bar{\psi}$};
		\node [style=copoint] (3) at (-1, 0.75) {$\bar{\psi}^{\dagger}$};
		\node [style=none] (4) at (-1, -0.25) {};
		\node [style=none] (5) at (-1, 0.5) {};
		\node [style=label] (6) at (0, 1.75) {$\obb{A}$};
		\node [style=label] (7) at (0, -1.75) {$\obb{A}$};
		\node [style=label] (8) at (-1.5, -0) {$\obb{A}$};
	\end{pgfonlayer}
	\begin{pgfonlayer}{edgelayer}
		\draw (1.center) to (0.center);
		\draw (5.center) to (2);
	\end{pgfonlayer}
\end{tikzpicture}}

\right]
=
\left[
\scalebox{0.8}{\begin{tikzpicture}
	\begin{pgfonlayer}{nodelayer}
		\node [style=none] (0) at (2.25, 1.25) {};
		\node [style=none] (1) at (2.25, -1.25) {};
		\node [style=label] (2) at (2.25, 1.75) {$\obb{A}$};
		\node [style=label] (3) at (2.25, -1.75) {$\obb{A}$};
	\end{pgfonlayer}
	\begin{pgfonlayer}{edgelayer}
		\draw (1.center) to (0.center);
	\end{pgfonlayer}
\end{tikzpicture}}

\right]
\quad 
\text{    so that     }
\quad
\scalebox{0.8}{\begin{tikzpicture}
	\begin{pgfonlayer}{nodelayer}
		\node [style=label] (0) at (10.5, 1.75) {$\obb{A}$};
		\node [style=label] (1) at (10.5, -1.75) {$\obb{A}$};
		\node [style=label] (2) at (8.25, -1.75) {$\obb{A}$};
		\node [style=none] (3) at (10.5, 1.25) {};
		\node [style=none] (4) at (7.25, 0.75) {};
		\node [style=label] (5) at (6.75, -0) {$\obb{A}$};
		\node [style=none] (6) at (10.5, -1.25) {};
		\node [style=none] (7) at (7.25, -0.25) {};
		\node [style=label] (8) at (8.25, 1.75) {$\obb{A}$};
		\node [style=none] (9) at (8.25, 1.25) {};
		\node [style=copoint] (10) at (7.25, 0.75) {$\bar{\psi}^{\dagger}$};
		\node [style=point] (11) at (7.25, -0.75) {$\bar{\psi}$};
		\node [style=none] (12) at (8.25, -1.25) {};
		\node [style=none] (13) at (9.5, -0) {$=$};
	\end{pgfonlayer}
	\begin{pgfonlayer}{edgelayer}
		\draw (12.center) to (9.center);
		\draw (4.center) to (11);
		\draw (6.center) to (3.center);
	\end{pgfonlayer}
\end{tikzpicture}}

\]
by positive-freeness.
But then 
\[
\scalebox{0.8}{\begin{tikzpicture}
	\begin{pgfonlayer}{nodelayer}
		\node [style=label] (0) at (14, 3) {$\obb{A}$};
		\node [style=label] (1) at (14, -0.5) {$\obb{A}$};
		\node [style=label] (2) at (16.75, -0.5) {$\obb{A}$};
		\node [style=none] (3) at (14, 2.5) {};
		\node [style=none] (4) at (18, 2) {};
		\node [style=label] (5) at (17.5, 1.25) {$\obb{A}$};
		\node [style=none] (6) at (14, -0) {};
		\node [style=none] (7) at (18, 1) {};
		\node [style=label] (8) at (16.75, 3) {$\obb{A}$};
		\node [style=none] (9) at (16.75, 2.5) {};
		\node [style=copoint] (10) at (18, 2) {$\bar{\psi}^{\dagger}$};
		\node [style=point] (11) at (18, 0.5) {$\bar{\psi}$};
		\node [style=none] (12) at (16.75, -0) {};
		\node [style=none] (13) at (15.75, 1.25) {$=$};
		\node [style=none] (14) at (26, 0.5) {};
		\node [style=none] (15) at (24.5, 2.5) {};
		\node [style=none] (16) at (24.5, 2.5) {};
		\node [style=none] (17) at (23, 0.25) {};
		\node [style=none] (18) at (23, 2.5) {};
		\node [style=wide point] (19) at (25.25, 0.25) {$\bar{\eta}$ };
		\node [style=none] (20) at (24.5, 0.5) {};
		\node [style=wide copoint] (21) at (23.75, 2.75) {$\bar{\eta}^{\dagger}$ };
		\node [style=none] (22) at (26, 2.75) {};
		\node [style=scalar] (23) at (14.75, 1.25) {$u$};
		\node [style=none] (24) at (23.75, 1.75) {};
		\node [style=none] (25) at (23.75, 1.75) {};
		\node [style=none] (26) at (24.5, 2.5) {};
		\node [style=none] (27) at (24.5, 1) {};
		\node [style=none] (28) at (23.75, 1.75) {};
		\node [style=none] (29) at (23.75, 1.75) {};
		\node [style=none] (30) at (23, 2.5) {};
		\node [style=scalar] (31) at (19, 1.25) {$u$};
		\node [style=none] (32) at (20.25, 1.25) {$=$};
		\node [style=label] (33) at (21.75, 3) {$\obb{A}$};
		\node [style=copoint] (34) at (26, 2.75) {$\bar{\psi}^{\dagger}$};
		\node [style=none] (35) at (21.75, -0) {};
		\node [style=none] (36) at (21.75, 2.5) {};
		\node [style=point] (37) at (23, 0.25) {$\bar{\psi}$};
		\node [style=label] (38) at (21.75, -0.5) {$\obb{A}$};
		\node [style=none] (39) at (23, 1) {};
		\node [style=none] (40) at (27.5, 1.25) {$=$};
		\node [style=copoint] (41) at (32, 0.5) {$\bar{\psi}^{\dagger}$};
		\node [style=none] (42) at (32, 0.75) {};
		\node [style=none] (43) at (30.5, -0.5) {};
		\node [style=none] (44) at (30.5, 3.25) {};
		\node [style=none] (45) at (30.5, 3.25) {};
		\node [style=none] (46) at (32, 3.25) {};
		\node [style=none] (47) at (32, -0.5) {};
		\node [style=none] (48) at (32, 1.5) {};
		\node [style=none] (49) at (32, 3.25) {};
		\node [style=label] (50) at (29.25, -1.25) {$\obb{A}$};
		\node [style=wide point] (51) at (31.25, -0.75) {$\bar{\eta}$ };
		\node [style=point] (52) at (32, 2.25) {$\bar{\psi}$};
		\node [style=none] (53) at (29.25, 3.25) {};
		\node [style=wide copoint] (54) at (31.25, 3.5) {$\bar{\eta}^{\dagger}$ };
		\node [style=none] (55) at (29.25, -0.75) {};
		\node [style=none] (56) at (32, 3.25) {};
		\node [style=label] (57) at (29.25, 3.75) {$\obb{A}$};
		\node [style=none] (58) at (30.5, -0.5) {};
		\node [style=none] (59) at (30.5, 3.25) {};
	\end{pgfonlayer}
	\begin{pgfonlayer}{edgelayer}
		\draw (12.center) to (9.center);
		\draw (4.center) to (11);
		\draw (6.center) to (3.center);
		\draw (14.center) to (22.center);
		\draw [bend left, looseness=1.00] (26.center) to (25.center);
		\draw [bend right, looseness=1.00] (27.center) to (28.center);
		\draw [bend right, looseness=1.00] (30.center) to (29.center);
		\draw (27.center) to (20.center);
		\draw (35.center) to (36.center);
		\draw [bend left, looseness=1.00] (39.center) to (24.center);
		\draw (39.center) to (17.center);
		\draw (47.center) to (42.center);
		\draw (55.center) to (53.center);
		\draw (44.center) to (58.center);
		\draw (46.center) to (52);
	\end{pgfonlayer}
\end{tikzpicture}}

\]
Then by positive-freeness in $\plusIdag{\ctb}$ we have $\id{A} \cdot u = \id{A}$ , so that $\eta$ satisfies the first equation of a dagger dual. The second equation is shown identically. 
\end{proof}

\begin{example}
$\MatS$ is dagger-compact for any involutive commutative semi-ring $S$. In particular so are $\FHilb \simeq \Mat_{\mathbb{C}}$ and its quotient $\FHilbP$, which satisfies the above conditions, with $\FHilb \simeq \plusIdag{\FHilbP}$.
\end{example}

\bibliographystyle{alpha}
\bibliography{thesis-bib}

\appendix

\section{Universality} \label{app:universality}

Let us now demonstrate a universal property of the $\mathsf{GP}$ construction. 

We say that a category has \emph{chosen} finite coproducts when it comes with a choice of initial object and coproduct $A + B$ for each pair of objects $A, B$. Let $\MD$ be the category whose objects are monoidal categories $(\catC, \otimes)$ with chosen finite coproducts which are monic and distributive, with morphisms being strict monoidal functors $F \colon \catC \to \catD$ which preserve coproducts strictly, meaning that $F(0) = 0$ and $F(A + B) = F(A) + F(B)$ for all $A, B$. Define the category $\MDG$ similarly but with each object $\catC$ coming with a choice of global phases $\mathbb{P}$ and with $F \colon \catC \to \catD$ preserving them, i.e.~whenever $u$ is a global phase so is $F(u)$.

The forgetful functor $\MDG \to \MD$ has a left adjoint which chooses for each $\catC$ the trivial global phase group $\{\id{I}\}$, and the unit of the adjunction is an isomorphism, making this a coreflection.



Now let us say that a monoidal category has \emph{chosen} finite phased coproducts when it comes with a chosen initial object, phased coproduct $A \pcoprod B$ for each pair of objects $A, B$ and a choice of:
\begin{itemize}
\item isomorphisms  $(A \pcoprod B) \pcoprod C \simeq A \pcoprod (B \pcoprod C)$ and $(A \pcoprod B) \pcoprod C \simeq (B \pcoprod A) \pcoprod C$ preserving the $\pcoproj_A, \pcoproj_B, \pcoproj_C$ as in Proposition~\ref{prop:assoc}, for all objects $A, B, C$:
\item morphism $c_{A,B} \colon A \otimes B \pcoprod I \to (A \pcoprod I) \otimes (B \pcoprod I)$ satisfying~\eqref{eq:corner};
\item 
isomorphism $\beta \colon I \otimes I \pcoprod I \to I \pcoprod I$ as in the proof of Theorem~\ref{thm:constr_is_monoidal}
\end{itemize}

Define the category $\MDPh$ just like $\MD$ but replacing `coproducts' by `phased coproducts', now requiring morphisms $F$ to preserve the choices of $0$, $A \pcoprod B$ and all of these morphisms.

We consider the following slight adaptation of the $\mathsf{GP}$ construction. For an object $\catC$ of $\MDPh$ define $\GPa{\catC}$ to have the same objects as $\catC$, with morphisms $A \to B$ being diagonal morphisms $f \colon A \pcoprod I \to B \pcoprod I$ with $f \circ \pcoproj_I = \pcoproj_I$. Here we use the chosen phased coproduct $A \pcoprod I$ which comes with $\catC$ by definition, for each object $A$. Then $\GPa{\catC}$ is monoidal as before, using the chosen morphisms $c_{A,B}$ and $\beta$, now with monoidal unit $I$ and having $\otimes$ defined on objects just as in $\catC$. 

 \begin{theorem} \label{thm:equivalence of categories}
The assignments $\catC \mapsto \GPa{\catC}$ and $(\catD, \mathbb{P}) \mapsto \catD_{\mathbb{P}}$ extend to an equivalence of categories $\MDPh \simeq \MDG$.


\end{theorem}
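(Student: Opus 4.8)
The plan is to upgrade the object-level bijection of Corollary~\ref{cor:phcoprodcorrespon} to a functorial equivalence. On objects the two constructions reproduce, up to canonical isomorphism, the assignments $\plusI{-}$ and $(-)_{\mathbb{P}}$ of that corollary, so the monoidal and (phased-)coproduct structure is already understood; the genuinely new work is (i) defining the assignments on morphisms and checking they preserve all the relevant chosen structure, and (ii) verifying naturality of the comparison functors already produced in the proof of Corollary~\ref{cor:phcoprodcorrespon}. Throughout, the objects of $\MDPh$ are taken to carry the hypotheses needed to invoke that corollary, i.e.\ their phases are transitive -- this is forced anyway, since the image of any $(-)_{\mathbb{P}}$ has transitive phases by Lemma~\ref{lem:globaltolocalmonoidal}.

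First I would make $\GPa{-}$ into a functor $\MDPh \to \MDG$. It acts as $F$ does on objects, and sends a morphism $f$ of $\GPa{\catC}$ -- a diagonal $f \colon A \pcoprod I \to B \pcoprod I$ in $\catC$ with $f \circ \pcoproj_I = \pcoproj_I$ -- to $F(f)$. Since a morphism $F$ of $\MDPh$ strictly preserves the chosen initial object, the chosen phased coproducts $A \pcoprod B$ with their coprojections, and the chosen $c_{A,B}$ and $\beta$, the morphism $F(f)$ is again diagonal with $F(f) \circ \pcoproj_I = F(\pcoproj_I) = \pcoproj_I$, so it lies in $\GPa{\catD}$; and $F$ carries phases to phases (from $F(U) \circ \pcoproj_A = F(\pcoproj_A) = \pcoproj_A$ and likewise for $\pcoproj_B$), so $\GPa{F}$ carries global phases of $\GPa{\catC}$ to global phases of $\GPa{\catD}$. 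Tracing the definitions of $\tens$, $\aalpha$ and $\beta$ from Theorem~\ref{thm:constr_is_monoidal} then shows $\GPa{F}$ is strict monoidal and strictly preserves the chosen coproducts $\obb{A}+\obb{B} = (A\pcoprod B)\pcoprod I$ and their coprojections; functoriality is immediate. Dually, I would define $(-)_{\mathbb{P}} \colon \MDG \to \MDPh$ on objects by equipping $\catD_{\mathbb{P}}$ with the chosen phased-coproduct data obtained as images of the chosen coproduct data of $\catD$ (legitimate by Lemma~\ref{lem:globaltolocalmonoidal}), and on a morphism $G \colon (\catD,\mathbb{P}) \to (\catE,\mathbb{Q})$ by letting $G_{\mathbb{P}}$ act as $G$ on objects and by $[f] \mapsto [G(f)]$ on morphisms; this is well defined since $G(\mathbb{P}) \subseteq \mathbb{Q}$, and it is strict monoidal and preserves the chosen data because $G$ does so for coproducts and the quotient functors are strict monoidal.

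Next I would exhibit the two natural isomorphisms. For $(-)_{\mathbb{P}} \circ \GPa{-} \cong \mathrm{id}_{\MDPh}$, use the functor $[-] \colon (\GPa{\catC})_{\mathbb{P}} \to \catC$ which is the identity on objects and sends the class of $f$ to the unique $[f] \colon A \to B$ in $\catC$ with $f \circ \pcoproj_A = \pcoproj_B \circ [f]$. By Lemma~\ref{lem:phasesAreScalars} and the phased-coproduct universal property this is a bijection on each homset -- every $g \colon A \to B$ lifts to such an $f$, and two lifts differ by a phase, which in $\GPa{\catC}$ is a global-phase scalar -- so $[-]$ is an isomorphism of categories, strict monoidal and compatible with the chosen data by construction. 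Naturality in $\catC$ reduces to $[\GPa{F}(f)] = F([f])$, obtained by applying $F$ to $f \circ \pcoproj_A = \pcoproj_B \circ [f]$ and using uniqueness. For $\GPa{-} \circ (-)_{\mathbb{P}} \cong \mathrm{id}_{\MDG}$, reuse the functor $F$ from the proof of Corollary~\ref{cor:phcoprodcorrespon}, repackaged as an identity-on-objects functor $\catD \to \GPa{\catD_{\mathbb{P}}}$ sending $h$ to $[\,h + \id{I}\,]$; full faithfulness is exactly the computation there (every morphism of $\GPa{\catD_{\mathbb{P}}}$ is uniquely of the form $[\,h + u\,]$ with $h$ in $\catD$ and $u \in \mathbb{P}$, and $[\,h+u\,] = [\,(u^{-1}\cdot h)+\id{I}\,]$), strong monoidality and preservation of global phases are as there, and naturality in $(\catD,\mathbb{P})$ is the identity $[\,G(h)+\id{I}\,] = [\,G(h+\id{I})\,]$. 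Having both composites naturally isomorphic to the identities gives the equivalence $\MDPh \simeq \MDG$.

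The main obstacle is not conceptual but is the volume of bookkeeping: one must check that every piece of the \emph{chosen} data -- the associativity and symmetry isomorphisms of Proposition~\ref{prop:assoc}, the comparison maps $c_{A,B}$, and $\beta$ -- is preserved on the nose by $\GPa{F}$ and by $G_{\mathbb{P}}$, and that the comparison functors $[-]$ and $F$ are themselves monoidal and respect those choices. All of this follows routinely from the explicit constructions in Theorems~\ref{thm:constr_is_monoidal} and~\ref{thm:getmoncoprod} and Lemmas~\ref{lem:globaltolocalmonoidal} and~\ref{lem:phasesAreScalars}; the one point deserving a little care is that $\GPa{F}$ preserves the chosen coproduct $\obb{A}+\obb{B} = (A\pcoprod B)\pcoprod I$ with its coprojections, which unwinds to $F$ preserving the chosen associativity isomorphisms and phased coprojections.
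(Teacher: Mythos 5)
Your proposal is correct and follows essentially the same route as the paper: define $\GPa{F}$ by $f \mapsto F(f)$ and $G_{\mathbb{P}}$ by $[f] \mapsto [G(f)]$ using strict preservation of the chosen data, then observe that the comparison functors from Corollary~\ref{cor:phcoprodcorrespon} become (natural) isomorphisms once everything is phrased with chosen structure, exactly as the paper does (it even notes they are isomorphisms of categories, which your identity-on-objects, homset-bijective descriptions recover). The only wording to tighten is that the component $\catD \to \GPa{\catD_{\mathbb{P}}}$ must be \emph{strict} monoidal to be a morphism of $\MDG$, which indeed holds for your repackaged identity-on-objects functor since $\tens$ on objects of $\GPa{\catD_{\mathbb{P}}}$ is defined via the chosen coproducts of $\catD$.
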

\begin{proof}
First note that these assignments are well-defined on objects.
In one direction, $\GPa{\catC}$ comes with a (distributive) choice of coproducts with initial object being that in $\catC$ and $A + B$ being the chosen object $A \pcoprod B$ in $\catC$, via the canonical coprojections $\pcoproj_{A,I} \colon A \pcoprod I \to (A \pcoprod B) \pcoprod I$ and $\pcoproj_{B,I} \colon B \pcoprod I \to (A \pcoprod B) \pcoprod I$ coming from the chosen isomorphisms 
\[
(A \pcoprod B) \pcoprod I
\simeq
A \pcoprod (B \pcoprod I)
\qquad
(A \pcoprod B) \pcoprod I
\simeq
(B \pcoprod A) \pcoprod I
\simeq
B \pcoprod (A \pcoprod I)
\]
Conversely in $\catD_\mathbb{P}$ we can choose the initial object to be that in $\catD$ and choose $A \pcoprod B$ to be their coproduct $A + B$ in $\catD$. Then the remaining morphisms we need to choose all have a canonical (unique) choice $x$ with respect to the coproducts in $\catD$ and so we may then choose $[x]_{\mathbb{P}}$ in $\catD_{\mathbb{P}}$.

Now $(-)_\mathbb{P}$ is functorial; indeed any $F \colon \catC \to \catD$ in $\MDG$ preserves global phases and so satisfies $f \sim g \implies F(f) \sim F(g)$. Hence it restricts to a functor $F_\mathbb{P} \colon \catC_\mathbb{P} \to \catD_\mathbb{P}$, which is easily seen to again be strict monoidal. Since $F$ strictly preserves coproducts, $F_\mathbb{P}$ then strictly preserves phased coproducts and the above chosen morphisms. 

Conversely, any $F \colon \catC \to \catD$ in $\MDPh$ satisfies $F(A \pcoprod I) = F(A) \pcoprod F(I) = F(A) \pcoprod I$ and so also defines a functor $\GPa{\catC} \to \GPa{\catD}$. Since $F$ strictly preserves the above choices it then strictly preserves coproducts in $\MDPh$, where it is again strict monoidal. Hence $\GPa{-}$ is functorial also. 

Finally, the definition of $\GPa{-}$ makes our earlier equivalences now isomorphisms of categories $\catC \simeq \GPa{\catC}_\mathbb{P}$ and $\catD \simeq \GPa{\catD_\mathbb{P}}$. Moreover these isomorphisms are natural, yielding the above equivalence.
\end{proof}

Combining this result with the coreflections between $\MDG$ and $\MD$ gives the following.

\begin{corollary} \label{cor:coreflection}
The $\GPa{-}$ construction is left adjoint to the forgetful functor from $\MD$ to $\MDPh$:
%
\[
\begin{tikzcd}
\MD \arrow[rr,bend left = 20, "U"]
& 
\bot 
&
\MDPh 
\arrow[ll, bend left = 20,  "\GPa{-}"]
\end{tikzcd}
\]
Moreover this adjunction is a coreflection.
\end{corollary}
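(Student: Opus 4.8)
The plan is to derive the corollary entirely formally, by pasting the equivalence $\MDPh \simeq \MDG$ of Theorem~\ref{thm:equivalence of categories} onto the coreflection between $\MD$ and $\MDG$ recorded just above it, and then checking that under this pasting the two functors named in the statement are exactly recovered. So I would first name the pieces: let $W \colon \MDG \to \MD$ be the forgetful functor and $L \colon \MD \to \MDG$, $L(\catD) = (\catD, \{\id{I}\})$, its left adjoint, so $L \dashv W$; since $W \circ L = \id{\MD}$ the unit of this adjunction is an identity, so it is a coreflection and $L$ is fully faithful. Let $E \colon \MDPh \to \MDG$ be the equivalence $\catC \mapsto \GPa{\catC}$ of Theorem~\ref{thm:equivalence of categories}, and $(-)_\mathbb{P} \colon \MDG \to \MDPh$ a chosen quasi-inverse.

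Next I would identify the relevant composites with the functors appearing in the statement. The composite $(-)_\mathbb{P} \circ L$ sends $\catD \in \MD$ to $(\catD, \{\id{I}\})_\mathbb{P}$, which as a category is just $\catD$ itself, since quotienting by the trivial global phase group is the identity; under this identification it carries the genuine coproducts of $\catD$ viewed as phased coproducts (Lemma~\ref{lem:globaltolocalmonoidal}, taking $\mathbb{P}$ trivial) together with the canonical choices of the associativity, $c_{A,B}$ and $\beta$ data, i.e.\ it is exactly the forgetful functor $U \colon \MD \to \MDPh$. On the other hand $W \circ E$ sends $\catC \in \MDPh$ to $\GPa{\catC}$ with its chosen coproducts, the canonical global phase group having been forgotten --- i.e.\ it is the $\GPa{-}$ construction regarded as landing in $\MD$.

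Then I would invoke the standard fact that adjunctions compose with equivalences. An equivalence is in particular an adjunction $E \dashv (-)_\mathbb{P}$ with invertible unit and counit, so composing it with $L \dashv W$ produces, naturally in $\catD \in \MD$ and $\catC \in \MDPh$, bijections
\[
\MDPh\bigl(U\catD,\,\catC\bigr)\;\cong\;\MDG\bigl(L\catD,\,E\catC\bigr)\;\cong\;\MD\bigl(\catD,\,W(E\catC)\bigr)\;=\;\MD\bigl(\catD,\,\GPa{\catC}\bigr),
\]
where the first uses full faithfulness of $E$ together with $E\circ(-)_\mathbb{P}\cong\id{\MDG}$, the second is $L\dashv W$, and the last is the identification from the previous step. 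This is exactly the asserted adjunction between the forgetful functor $U$ and the $\GPa{-}$ construction, with $U$ in the role of left adjoint. For the coreflection clause I would observe that the unit of this composite adjunction is assembled from the identity unit of $L \dashv W$ and the invertible unit of the equivalence, hence is a natural isomorphism; equivalently $U = (-)_\mathbb{P}\circ L$ is a composite of two fully faithful functors, hence fully faithful --- which is precisely the assertion that the adjunction is a coreflection.

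I do not expect a genuine obstacle; everything reduces to two facts already in hand (Theorem~\ref{thm:equivalence of categories} and the coreflection $L \dashv W$) plus the general lemma on composing adjunctions with equivalences. The one point requiring a little care is the bookkeeping in the middle step --- checking that $(\catD, \{\id{I}\})_\mathbb{P}$ returns $\catD$ on the nose, equipped with exactly the chosen phased-coproduct structure that an object of $\MDPh$ carries by definition, so that $(-)_\mathbb{P}\circ L$ is literally the forgetful functor rather than merely naturally isomorphic to it. This is immediate once one feeds in the on-the-nose isomorphisms $\catD \cong \GPa{\catD_\mathbb{P}}$ and $\catC \cong \GPa{\catC}_\mathbb{P}$ already established in the proof of Theorem~\ref{thm:equivalence of categories}.
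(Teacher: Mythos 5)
Your proof is correct and is essentially the paper's own argument: the paper's proof is precisely ``paste the coreflection $L \dashv W$ between $\MD$ and $\MDG$ onto the equivalence $\MDPh \simeq \MDG$ of Theorem~\ref{thm:equivalence of categories}'', which is exactly the composition of adjunctions you spell out, including the identifications of $(-)_\mathbb{P}\circ L$ with the forgetful functor $U$ and of $W$ composed with the equivalence with $\GPa{-}\colon\MDPh\to\MD$. One remark: what this pasting yields is, as you state, $U \dashv \GPa{-}$ with invertible unit (so $U$ is fully faithful), i.e.\ the forgetful functor is the \emph{left} adjoint and $\GPa{-}$ the right adjoint; this is the only reading compatible with the coreflection claim (a fully faithful left adjoint), since $\GPa{-}\colon\MDPh\to\MD$ is not full, so the corollary's wording that $\GPa{-}$ is ``left adjoint'' should be understood (or corrected) accordingly, and your derivation gives the intended statement.
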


Specialising to braided monoidal categories and braided strict monoidal functors we define categories $\BMD$, $\BMDG$ and $\BMDPh$ similarly, and again we obtain the above adjunction. 
Additionally, the forgetful functor $U \colon \BMDG \to \BMD$ now has a right adjoint choosing for $\catC$ the global phase group $\Aut(I)$ of all invertible scalars in $\catC$, and this adjunction is a reflection. Using Theorem~\ref{thm:equivalence of categories} again yields the following.

\begin{corollary}
The $\GPa{-}$ construction has a right adjoint forming a reflection
\[
\begin{tikzcd}
\BMD \arrow[rr,bend right = 20, swap, "(-)_{\Aut(I)}"]
& 
\bot 
&
\BMDPh 
\arrow[ll, bend right = 20, swap, "\GPa{-}"]
\end{tikzcd}
\]
\end{corollary}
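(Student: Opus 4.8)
The plan is to obtain the adjunction by composing two adjunctions we already have: the braided version of the equivalence in Theorem~\ref{thm:equivalence of categories}, and the reflection $U \dashv (-)_{\Aut(I)}$ between $\BMDG$ and $\BMD$ recalled just above.

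First I would note that in the braided setting Theorem~\ref{thm:equivalence of categories} provides an equivalence $E := \GPa{-} \colon \BMDPh \to \BMDG$ with pseudo-inverse $(-)_{\mathbb{P}} \colon \BMDG \to \BMDPh$, which we may take to be an adjoint equivalence; thus $E \dashv (-)_{\mathbb{P}}$ with invertible unit and counit. Secondly, recall that the forgetful functor $U \colon \BMDG \to \BMD$ has right adjoint $R \colon \catD \mapsto (\catD, \Aut(I))$ — this is a legitimate choice of global phases precisely because in a braided monoidal category every scalar is central — and that $U R = \mathrm{id}_{\BMD}$, so that the counit of $U \dashv R$ is the identity and this adjunction is a reflection. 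Then I would simply compose, obtaining $U \circ E \dashv (-)_{\mathbb{P}} \circ R$ as functors $\BMDPh \rightleftarrows \BMD$.

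The rest is bookkeeping, which is also where the only real care is needed. Unwinding definitions, $U \circ E$ is exactly the $\GPa{-}$ construction regarded as landing in $\BMD$ by forgetting its canonical global phases, while $(-)_{\mathbb{P}} \circ R$ sends $\catD$ to $\catD_{\Aut(I)}$, i.e.~is the functor $(-)_{\Aut(I)}$ named in the statement; so the composite adjunction is indeed $\GPa{-} \dashv (-)_{\Aut(I)}$. Finally, the counit of a composite adjunction is the pasting of the two component counits, and here one component counit is invertible (from the equivalence) and the other is the identity (from the reflection), so the composite counit is invertible. Hence $\GPa{-} \dashv (-)_{\Aut(I)}$ is a reflection, as claimed. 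No new computation is involved beyond assembling existing results; the main thing to get right is that "reflection" is inherited under composition, which holds precisely because both component counits are isomorphisms.
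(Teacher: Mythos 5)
Your proposal is correct and follows essentially the same route as the paper: the paper likewise obtains the result by composing the reflection $U \dashv (-)_{\Aut(I)}$ between $\BMDG$ and $\BMD$ (legitimate because scalars are central in the braided setting) with the equivalence $\BMDPh \simeq \BMDG$ of Theorem~\ref{thm:equivalence of categories}. Your added bookkeeping about composite counits being isomorphisms is exactly the detail the paper leaves implicit.
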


The correspondences of Corollaries~\ref{cor:biproducts} and~\ref{cor:daggerbiproducts} can be made functorial in a similar way. In future work it would be interesting to see if the construction of Theorem~\ref{thm:getmoncoprod} can also be made universal outside of the monoidal setting.

\end{document}